\newtheorem{theorem}{Theorem}
\newtheorem{corollary}[theorem]{Corollary}
\newtheorem{definition}[theorem]{Definition}
\newtheorem{lemma}[theorem]{Lemma}
\newtheorem{remark}[theorem]{Remark}
\newtheorem{proposition}[theorem]{Proposition}
\newcommand{\supp}{\operatorname{supp}}
\newcommand{\Ord}{\operatorname{Ord}}
\newcommand{\Res}{\operatorname{Res}}
\newcommand{\Var}{\operatorname{Var}}
\newcommand{\as}[1]{\text{as } #1 \rightarrow\infty}
\renewcommand{\d}{\, \text{d}}
\newcommand{\s}[2]{(#1_#2)_{#2 \geq 1}}
\newcommand{\M}{\mathcal{M}}
\newcommand{\R}{\mathbb{R}}
\newcommand{\N}{\mathbb{N}}
\renewcommand{\P}{\mathbb{P}}
\newcommand{\E}{\mathbb{E}}
\newcommand{\bea}{\begin{eqnarray}}
\newcommand{\eea}{\end{eqnarray}}
\newcommand{\Bea}{\begin{eqnarray*}}
\newcommand{\Eea}{\end{eqnarray*}}
\def\one{{\mathbf{1}}}
\DeclareMathOperator*{\argmin}{arg\,min}
\DeclareRobustCommand{\stirling}{\genfrac\{\}{0pt}{}}
\begin{document}

\title[Trimmed strong laws and distributional limits]
{Trimmed strong laws and distributional limits for exponentially mixing systems}

\author{M. Auer}
\address[M.~Auer]{School of Mathematics and Physics \\ University of Queensland, Australia.}
\email{\href{mailto:m.auer@uq.edu.au}{m.auer@uq.edu.au}}
\author{S. Liu}
\address[S.~Liu]{Beijing Institute of Mathematical Sciences and Applications\\ Beijing 101408, China.}
\email{\href{mailto:liusixu@bimsa.cn}{liusixu@bimsa.cn}}

\thanks{The authors would like to thank Dmitry Dolgopyat for helpful discussions and the University of Maryland, where this project was initiated, for its hospitality. During this project, M. Auer was partially supported by the Austrian Science Fund (FWF), P 33943-N and by the Australian Research Council (FT240100592). S. Liu was partially supported by National Natural
Science Foundation of China No.~12501237 and Natural Science Foundation of Beijing No.~1244041.}

\date{}
\subjclass{Primary: 37A50, 60F15, 60G70; Secondary: 37C40.}
\keywords{ergodic theory, trimmed sum, law of large numbers, almost sure convergence}

\begingroup
\leftskip4em
\rightskip\leftskip
\begin{small}

\begin{abstract}
The Birkhoff Ergodic Theorem establishes pointwise convergence for integrable observables, but for $f\notin L^1$, no normalization yields almost sure convergence. This paper investigates trimmed ergodic sums, where the largest observations are removed, for observables with polynomial tails satisfying $\P(f>t)\,t^{1/\alpha}\to c>0$ as $t\to\infty$, in exponentially mixing dynamical systems. We prove trimmed strong laws of large numbers when $\alpha\geq 1$, extending known results from the i.i.d.\ case. We further establish distributional limit theorems for both lightly and intermediately trimmed sums in the regime $\alpha>1/2$, showing convergence to an explicitly described non-standard law and to a normal distribution, respectively. The proofs rely on approximating trimmed sums by truncated ergodic sums and exploiting the system's exponential mixing properties.
\end{abstract}

\end{small}
\maketitle
\par
\endgroup
\part{Results}

\section{Introduction}
A cornerstone of modern ergodic theory is the Birkhoff Ergodic Theorem, which asserts that for an ergodic transformation $T$ and an integrable observable $f,$ the time averages converge almost surely to the spatial average. More precisely,
\begin{equation*}
\lim_{N\to\infty} \frac{S_N(f)(x)}{N} = \E f \quad \text{for almost every } x,
\end{equation*}
where
$$
S_N(f)(x) = \sum_{n=0}^{N-1} f(T^n x).
$$
A natural question is what happens for a non-negative observable $f$ that is not integrable. One expects $S_N(f)$ to grow faster than linearly, that is, faster than a constant multiple of $N$. Nevertheless, one might hope that, after renormalization by a suitable sequence $d_N \to \infty$, pointwise convergence still holds. However, Aaronson~\cite{aaronsoninf} showed that such convergence to a nontrivial limit is impossible: for any non-negative $f\notin L^1$ and any positive normalizing sequence $d_N$,
\begin{equation}\label{aaronsonnoncon}
\limsup_{N\to\infty} \frac{S_N(f)(x)}{d_N} = \infty
\quad \text{or} \quad
\liminf_{N\to\infty} \frac{S_N(f)(x)}{d_N} = 0
\quad \text{for almost every } x.
\end{equation}

A heuristic explanation comes from order statistics: in the non-integrable case, the largest few terms typically dominate $S_N(f)$. Even a small shift in the position of an extreme value can therefore drastically change the sum, accounting for the oscillations in \eqref{aaronsonnoncon}. This motivates the study of \emph{trimmed sums}, where the largest terms are removed. For $N \geq 1$ and $0 \leq k(N) \leq N-1$, we define
\begin{equation}\label{TrimSum}
S_N^{k(N)}(f)(x) = \sum_{n=0}^{N-1} f(T^n x) - \sum_{i=1}^{k(N)} f(x_i^N),
\end{equation}
where $\{x_1^N, \dots, x_N^N\}$ is a reordering of the orbit points $x, T(x), \dots, T^{N-1}(x)$ such that
\begin{equation*}
f(x_1^N) \geq f(x_2^N) \geq \dots \geq f(x_N^N).
\end{equation*}
In the case $k(N)=0$, the sum in \eqref{TrimSum} is simply the usual Birkhoff sum $S_N(f)$, which we refer to as the \emph{untrimmed sum}. Trimmed sums originate in classical probability theory, with contributions from Feller~\cite{F68}, Kesten~\cite{kestenweak,kesten1992ratios}, Mori~\cite{morilighttrim,mori2}, and others.

To illustrate the concept, consider the function $f(x)=1/x$ on the unit interval with a singularity at the origin. The largest values of $f$ occur at the orbit points $x_1^N,\dots,x_{k(N)}^N$ that lie nearest to the origin, and the trimmed sum is obtained by summing over the remaining points. However, this example also reveals a key conceptual nuance: if $f$ is not monotonic, even when its only singularity remains at the origin, the points $x_1^N,\dots,x_{k(N)}^N$ need not be those closest to the singularity. This subtlety will play an important role in the subsequent analysis, and we revisit it in Section~\ref{nearsec}.

We begin by establishing a \emph{trimmed strong law of large numbers (SLLN)}: we seek sequences $d_N$ and $k(N)$ such that
\begin{equation}\label{trimsumcon}
\lim_{N\to\infty} \frac{S_N^{k(N)}(f)(x)}{d_N}=1
\quad \text{for almost every } x.
\end{equation}
The case where $k(N)=K>0$ is constant is referred to as \emph{light trimming}, while $k(N) \to \infty$ with $k(N) = o(N)$ is called \emph{intermediate trimming}. Trimming a positive fraction of terms, i.e.\ $k(N)/N \to c>0$ as $N \to \infty$, is uninteresting from our perspective, since it merely amounts to truncating $f$, reducing the problem to the classical SLLN.

In the most general sense, a trimmed SLLN always holds: for any non-negative $f$, there exist sequences $d_N$ and $k(N)$ such that \eqref{trimsumcon} is satisfied~(see \cite{AS25}). While conceptually important, this result is of limited practical use, since it provides no control over the choice of $d_N$ or, more crucially, $k(N)$. We therefore focus on functions with \emph{polynomial singularities}, informally satisfying
\begin{equation}\label{tlno}
\P(f>t)\,t^{\frac{1}{\alpha}}\to 1,\quad t\to\infty\quad \text{for some } \alpha >0.
\end{equation}
A broader definition appears in Definition~\ref{PowSing}. The functions $f$ are characterized by a singularity order $\beta>0,$ with $\alpha = \beta/d,$ where $d$ is the manifold dimension. For example, on $[0,1]$, consider $f(x) = x^{-\alpha}.$

We briefly recall the corresponding i.i.d.\ theory, which serves as the probabilistic model for our results for exponentially mixing systems. A detailed account of related results can be found in \cite{KessSchiSLLN}.
Let $\{X_N\}_{N \geq 1}$ be a sequence of nonnegative i.i.d.\ random variables satisfying
\begin{equation*}
\P(X_1>t)\,t^{\frac{1}{\alpha}}\to 1,\quad t\to\infty\quad \text{for some }\alpha>0.
\end{equation*}
For $k \geq 1$, let $X_N^{(k)}$ denote the $k$-th largest value among $X_1, \ldots, X_N$. For $0 \le k(N) \le N-1$, set
$$
S_N^{k(N)} = \sum_{n=1}^N X_n - \sum_{i=1}^{k(N)} X_N^{(i)}.
$$
The classical results for such trimmed sums are as follows:
\begin{itemize}
\item If $\alpha = 1$, then
\begin{equation}\label{iid1xcon}
\lim_{N\to\infty}\frac{S_N^K}{N\log N}=1
\quad\text{almost surely}.
\end{equation}
This lightly trimmed strong law with a fixed number $K>0$ of trimmed terms was established by Mori~\cite{mori2}, with related results on trimmed sums and order statistics obtained in Mori~\cite{morilighttrim} and Kesten--Maller~\cite{kesten1992ratios,km2}.
\item If $\alpha>1$, then
\begin{equation}\label{iidbetacon}
\lim_{N\to\infty} \frac{S_N^{k(N)}}{N^{\alpha} k(N)^{1-\alpha}}
= \frac{1}{\alpha - 1}
\quad\text{almost surely}.
\end{equation}
This strong law holds provided that $k(N)/\log\log N \to \infty$,  as follows from the law of the iterated logarithm of Haeusler--Mason~\cite{Haeusler_Mason_1987}. In contrast, if $k(N)/\log\log N \to c$ for some $c\in(0,\infty)$, then \eqref{iidbetacon} fails, by the non-standard law of the iterated logarithm proved by Haeusler~\cite{HaeuslernonstandardLIL}.
\end{itemize}
Beyond regular variation, Cs\"{o}rg\H{o}--Simons~\cite{CS96} obtained corresponding strong law results under intermediate trimming for tails satisfying sandwich-type regular variation, including St. Petersburg-type distributions as a notable special case.

It is widely expected that \eqref{iid1xcon} and \eqref{iidbetacon} extend to sufficiently well-behaved dynamical systems. Indeed, the following dynamical counterparts are known under the renormalization for which \eqref{tlno} holds:

\begin{itemize}
\item If $\alpha = 1$, then a dynamical analogue of \eqref{iid1xcon} has been established for $\psi$-mixing sequences \cite{AARONSONpsi}, and in the Gauss map setting, by Diamond--Vaaler \cite{DIAMOND1986} for the continued-fraction observable $f(x)=1/x$. For the same observable over the doubling map, Schindler~\cite{Sch18} showed that light trimming fails, whereas the same law holds after removing about $C\log\log\log N$ largest terms for $C>1/\log 2$. Light trimming fails because of the clustering of large values near the singularity at $0$, which is a fixed point of the doubling map, so removing finitely many largest terms still leaves substantial residual contributions, a mechanism already reflected in Haynes~\cite[Theorem~4]{Haynes14}.

\item If $\alpha > 1$, then \cite{KessSchiSLLN} showed a dynamical analogue of \eqref{iidbetacon} for systems with a spectral gap on a suitable Banach space and observables whose truncations have at most linear growth and whose level-set indicators remain uniformly bounded in the Banach space norm (see also Remark \ref{specrem}). This includes piecewise expanding interval maps with observables whose truncations and level-set indicators have uniformly controlled bounded variation. The result was later extended to subshifts of finite type via a quasi-H\"older framework \cite{KS20}.
\end{itemize}

While these results are well established for i.i.d.\ sequences and certain specific dynamical systems, their validity for general exponentially mixing systems remains an open question. In this paper, we address this gap by establishing trimmed strong laws of large numbers for such systems. For $\alpha=1$, we prove that a dynamical analogue of \eqref{iid1xcon} holds in full. For $\alpha>1$, we prove {a dynamical analogue of \eqref{iidbetacon}} under the trimming condition $k(N)/(\log N)^{\epsilon} \to \infty$ for some $\epsilon > 0$, which is close to the optimal $k(N)/\log\log N \to \infty$ condition known for i.i.d.\ sequences.

Furthermore, we advance the theory by proving the corresponding distributional limit theorems. Once a trimmed SLLN is established, the natural next question concerns the size of second-order deviations. In this context, an untrimmed analogue arises from the distributional limit theorems of Gnedenko--Kolmogorov~\cite{gnedenko1968limit} for observables not in $L^2$. They show that, in the i.i.d.\ setting, if $f$ has regularly varying tails, which is similar to our assumption of polynomial singularities, then there are sequences $a_N$ and $b_N$ such that
\begin{equation*}
\frac{S_N(f) - a_N}{b_N} \Rightarrow L \quad \as{N},
\end{equation*}
for a stable distribution $L$, where $\Rightarrow$ denotes distributional convergence.

Early work on the effect of extreme terms on heavy-tailed sums includes Darling~\cite{Dar52} and Arov--Bobrov~\cite{AB60}. For lightly trimmed sums, Hall~\cite{Hal78} established distributional limits in the stable-domain setting. In particular, for fixed $K\geq 1$, there exist sequences $a_N$ and $b_N$ such that
\begin{equation}\label{lightDLT}
\frac{S_N^K-a_N}{b_N} \Rightarrow L \quad \text{as } N \to \infty,
\end{equation}
where $L$ is a non-degenerate limiting distribution. As shown by Kesten~\cite{kestenweak}, the normalizing sequences $a_N$ and $b_N$ do not depend on $K$, whereas the limiting distribution $L$ does. Mori~\cite{Mori84} further developed the distributional theory of lightly trimmed sums, deriving in particular an explicit formula for the limiting distribution $L$.

The intermediately trimmed case, where $k(N)\rightarrow\infty$ but $k(N)=o(N)$, is of special interest because the limiting distribution is always normal. Cs\"{o}rg\H{o}--Horv\'{a}th--Mason~\cite{CHMinter} show that, for $k(N)\rightarrow\infty$ with $k(N)=o(N)$, there exist sequences $a_N$ and $b_N$ such that
\begin{equation}\label{interDLT}
\frac{S_N^{k(N)} - a_N}{b_N} \Rightarrow \mathcal{N}(0,1) \quad \as{N},
\end{equation}
where $\mathcal{N}(0,1)$ is the standard normal distribution with mean $0$ and variance $1$. Subsequently, Cs\"{o}rg\H{o}--Haeusler--Mason~\cite{CHMdesc} provided an explicit characterization of the norming sequences $a_N$ and $b_N$ in this case. Related distributional limits for intermediately trimmed sums normalized by extreme order statistics were obtained by Teugels~\cite{Teu81}.

While most classical results on distributional convergence are formulated for i.i.d.\ sequences, dynamical systems with strong mixing properties have also been studied in this context. For example, Bazarova--Berkes--Horv\'{a}th \cite{BBH16} established asymptotically Gaussian behavior for trimmed sums of partial quotients under the Gauss map. Our results extend this line of work to exponentially mixing systems of all orders.

In Theorems \ref{lighttrimthm} and \ref{intertrimthm}, we prove dynamical analogues of \eqref{lightDLT} and \eqref{interDLT} for systems that are exponentially mixing of all orders. Establishing these limit theorems in a dependent setting requires handling the interplay between extreme values (governing the trimming) and the system's correlation structure. We show that the lightly trimmed sum converges to a non-standard distribution described in Section \ref{SumPP}, while the intermediately trimmed sum, when suitably normalized, converges to a centered normal distribution. Collectively, these results provide a nearly complete dynamical counterpart to the classical trimming theory for i.i.d.\ sequences with polynomial tails, extending it to a broad class of non-independent systems.

The remainder of the paper is organized as follows. Section~\ref{PS} states the main theorems, and Section~\ref{nearsec} establishes the asymptotic equivalence between removing the largest observations and excluding points near a critical value. Moment estimates for truncated ergodic sums are given in Section~\ref{Tr}. Sections~\ref{LT} and \ref{IT} prove strong laws of large numbers under light and intermediate trimming, respectively. The tools for the non-standard limit law under light trimming are developed in Sections~\ref{PoiLgT} and \ref{SumPP}, with the proof completed in Section~\ref{LgTrDL}. Section~\ref{metricsec} summarizes facts on distributional convergence, and Section~\ref{intersec} establishes distributional limits under intermediate trimming.

Throughout this paper, we adhere to the following notational conventions:
\begin{itemize}
\item[$\bullet$] For real-valued functions $\alpha$ and $\beta$ defined on $\mathbb{N}$ or $\mathbb{R}$, we write $\beta = O(\alpha)$ if there exists a constant $C>0$ such that $|\beta(x)| \leq C |\alpha(x)|$ for all $x$, and $\beta=o(\alpha)$ if $|\beta(x)|/|\alpha(x)| \to 0$ as $x \to \infty$. If $\lim_{x\to\infty} \alpha(x)/\beta(x) = 1$, we write $\alpha \sim \beta$.

\item[$\bullet$] For $[0,\infty)$-valued functions $\alpha$ and $\beta$, we often write $\beta \ll \alpha$ instead of $\beta=O(\alpha)$, $\beta \gg \alpha$ instead of $\alpha=O(\beta)$, and $\beta \asymp \alpha$ if both $\beta \ll \alpha$ and $\alpha \ll \beta$.

\item[$\bullet$] For a measurable function $f$, its expectation is denoted $\mathbb{E} f = \int f \, d\mu$ if $f \in L^1(\mu)$, and its variance by $\mathrm{Var} f = \mathbb{E}((f-\mathbb{E}f)^2)$ if $f \in L^2(\mu)$.

\item[$\bullet$] For $x^* \in M$ and $r \ge 0$, let $B_r(x^*)$ denote the ball of radius $r$ centered at $x^*$. For simplicity, set $B_r(x^*) = \emptyset$ for $r < 0$. The notation $\mathbf{1}_A$ denotes the indicator function of a measurable set $A$.

\item[$\bullet$] For $d \geq 1$, let $B_d$ denote the volume of the $d$-dimensional unit ball.
\end{itemize}

\section{Polynomial Singularities}\label{PS}
Let $M$ be a compact smooth $d$-dimensional Riemannian manifold. Fix $\kappa\geq 1$. Let $T:M\to M$ be a $C^{\kappa}$ map preserving a probability measure $\mu$ that is absolutely continuous with respect to the Riemannian volume $\mathrm{vol},$ with a continuous density $\rho = \frac{\d\mu}{\d\mathrm{vol}}.$

We assume that the system is exponentially mixing for $C^\kappa$ observables, as defined below. When needed, we further assume that it is exponentially mixing of all orders for $C^\kappa$ observables.

\begin{definition}[Exponential mixing of order $m$]
The system $(M,\mathcal{B}(M),\mu,T)$ is said to be exponentially mixing of order $m$ if there are constants $C_m\geq 0$ and $\gamma>0$ such that, for all functions $f_0, \dots, f_{m-1} \in C^{\kappa}(M)$, and integers $0 \leq k_0 \leq \dots \leq k_{m-1}$, we have
\begin{equation}\label{ballretasm:mem}
\left|\E\left(\prod_{j=0}^{m-1} f_j \circ T^{k_j}\right)-\prod_{j=0}^{m-1} \E f_j \right|
\leq C_m e^{-\gamma \min_{0\leq j\leq m-2}(k_{j+1}-k_j)}\prod_{j=0}^{m-1} \|f_j\|_{C^{\kappa}}.
\end{equation}
In particular, when $m = 2$, the system is said to be exponentially mixing. If \eqref{ballretasm:mem} holds for all $m \geq 2,$ the system is called exponentially mixing of all orders.
\end{definition}

Since $m$ is fixed  or ranges over a bounded set in each application below, the dependence of $C_m$ on $m$ is immaterial, and we simply write $C$ in place of $C_m$ in \eqref{ballretasm:mem}.

\begin{definition}\label{PowSing}
Let $x^* \in M$, and let $f\in C^{\kappa}(M\setminus \{x^*\})$ be a non-negative measurable function. We say that $f$ has a \textit{power singularity of order} $\mathrm{Ord}_{x^*}(f)=\beta$ at $x^*$ if the function defined by
\begin{equation*}
g(x)=f(x) d(x,x^*)^{\beta} \quad x\not = x^*
\end{equation*}
admits a $C^{\kappa}$ extension in a neighborhood of $x^*$ and satisfies $g(x^*)>0$, where $d(\cdot, \cdot)$ denotes the geodesic distance on $M$. In this case, we call $g(x^*) = \mathrm{Res}_{x^*}(f)$ the residue of $f$ at $x^*$.
\end{definition}

\begin{remark}
The function $f(x)=x^{-\alpha}$ on $[0,1]$ does not strictly satisfy Definition~\ref{PowSing}, since the singularity at $x^*=0$ is one-sided. On $\mathbb{T}^1$, it would be two-sided. Nevertheless, the limit theorems below still apply after replacing $B_1$ by $B_1/2=1$. Indeed, $\mathrm{vol}(f>t)=t^{-1/\alpha},$
whereas for a proper power singularity $h$ of order $\alpha$,
$$
\mu(h>t) \sim B_d \rho(x^*) \mathrm{Res}_{x^*}(h)^{\frac{1}{\alpha}} t^{-\frac{1}{\alpha}}.
$$
Since $B_1=2$ corresponds to the two-sided case on $\mathbb{T}^1$, the one-sided setting contributes half this constant. This accounts for the factor $1/2$ difference between the constants in the i.i.d.\ case~\eqref{iid1xcon},~\eqref{iidbetacon} and those in Theorems~\ref{lightSLLN} and~\ref{interSLLN}.
\end{remark}

Under this condition, for $r>0$, there exists $\epsilon(r)>0$ such that
\begin{equation}\label{Estf}
\Bigl(1-\epsilon(r)\Bigr)\mathrm{Res}_{x^*}(f)d(x,x^*)^{-\beta}<f(x)<
\Bigl(1+\epsilon(r)\Bigr)\mathrm{Res}_{x^*}(f)d(x,x^*)^{-\beta}
\end{equation}
whenever $d(x,x^*)\leq r,$ and $\epsilon(r)\rightarrow0$ as $r\rightarrow0$. Without loss of generality, we assume that $\epsilon$ is $C^{\kappa}$ smooth and monotonically decreasing as $r$ approaches $0$. In the proofs we will always take $r = r_N$ with $r_N \to 0$ as $N \to \infty$, for example $r_N =\left(\frac{(\log \log N)^6}{N \log N}\right)^{\frac{1}{d}}$ in Section~\ref{LT} and $r_N\sim\left(\frac{k(N)}{B_d \rho(x^*) N}\right)^{\frac{1}{d}}$ in Section \ref{IT}.

Furthermore let $C_0 > 0$ be a constant such that
\begin{equation}\label{DeCf}
f(x) \leq C_0 d(x, x^*)^{-\beta} \quad \forall x\in M.
\end{equation}
We will also often denote $\alpha=\frac{\beta}{d}$.

\begin{definition}[Slowly recurrent point]\label{SlowRec}
A point $x^*\in M$ is called slowly recurrent if, for every $A, C>0$, there exists $r_0>0$ such that for all $0<r<r_0$ and positive integers $n\leq C |\log r|$, it holds that
\begin{equation*}
\mu \Bigl( B_r(x^*) \cap T^{-n}\bigl(B_r(x^*)\bigr) \Bigr) \leq \mu\bigl(B_r(x^*)\bigr) |\log r|^{-A}.
\end{equation*}
\end{definition}
This notion of slow recurrence is closely related to the mixing properties of the system. Specifically, if the system $(M, \mathcal{B}(M),\mu,T)$ is exponentially mixing, then $\mu$-almost every point is slowly recurrent (see \cite[Lemma 4.14]{DFL22}).

Throughout the rest of the paper, we assume that $x^*$ is the unique singularity of $f$, that it is of the power type described in Definition \ref{PowSing}, and that $\rho(x^*)>0$. Furthermore, we assume that $x^*$ is slowly recurrent as defined in Definition \ref{SlowRec}.

We will establish trimmed limit theorems for functions that exhibit non-integrable power singularities.

\begin{theorem}\label{lightSLLN}
Assume that the system $(M, \mathcal{B}(M), \mu, T)$ is exponentially mixing. Let $f \in C^{\kappa}(M \setminus \{x^*\})$ be a function with a power singularity of order $\operatorname{Ord}_{x^*}(f)=d$ at a slowly recurrent point $x^*\in M$. Then, for every fixed $K \geq 1$,
\begin{equation*}
\lim_{N \to \infty} \frac{S_N^K(f)(x)}{N \log N} = \operatorname{Res}_{x^*}(f)B_d \rho(x^*) \quad \text{for almost every } x.
\end{equation*}
\end{theorem}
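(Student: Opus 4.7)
My approach is to reduce the trimmed ergodic sum $S_N^K(f)(x)$ to a truncated ergodic sum that avoids a shrinking ball around $x^*$, and then to establish almost sure convergence of the truncated sum via a Chebyshev-plus-Borel--Cantelli argument along a geometric subsequence. I choose the truncation radius $r_N$ so that $N\mu(B_{r_N}(x^*))\to K$; since $\mu(B_r(x^*))\sim B_d\rho(x^*)r^d$, this gives $r_N\asymp N^{-1/d}$. By the asymptotic equivalence developed in Section~\ref{nearsec} (which uses slow recurrence of $x^*$ to compare the $K$ largest orbit values of $f$ with the visits of the orbit to $B_{r_N}(x^*)$), one expects
$$S_N^K(f)(x)=\sum_{n=0}^{N-1}\tilde f_N(T^n x)+o(N\log N)\quad\text{a.s.},\qquad \tilde f_N:=f\cdot\mathbf{1}_{M\setminus B_{r_N}(x^*)}.$$

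Next, I compute the mean of the truncated sum. Combining \eqref{Estf} with polar coordinates in a small neighborhood of $x^*$,
$$\E\tilde f_N=\int_{M\setminus B_{r_N}(x^*)}f\rho\,d\mathrm{vol}\sim\rho(x^*)\Res_{x^*}(f)\cdot dB_d\int_{r_N}^{r_0}s^{-1}\,ds\sim B_d\rho(x^*)\Res_{x^*}(f)\log N,$$
since the contribution from $\{d(\cdot,x^*)\ge r_0\}$ is bounded and the $\epsilon(r_N)$-corrections in~\eqref{Estf} are $o(1)$. Hence $N\E\tilde f_N\sim B_d\rho(x^*)\Res_{x^*}(f)\cdot N\log N$, matching the desired constant. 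For the fluctuations, $\tilde f_N$ is pointwise bounded by $\|\tilde f_N\|_\infty\asymp r_N^{-d}\asymp N$ via~\eqref{DeCf}, so the variance estimates of Section~\ref{Tr} (based on exponential mixing) should give $\Var(\sum_{n<N}\tilde f_N\circ T^n)=O(N^2)$, and Chebyshev's inequality then yields
$$\P\Bigl(\bigl|\textstyle\sum_{n<N}\tilde f_N\circ T^n-N\E\tilde f_N\bigr|>\epsilon N\log N\Bigr)=O((\log N)^{-2}).$$

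To upgrade this to almost sure convergence, I take a slowly growing geometric subsequence $N_j=\lfloor(1+\eta)^j\rfloor$; then the Chebyshev tail probabilities sum in $j$, so Borel--Cantelli gives a.s.\ convergence of $S_{N_j}^K(f)/(N_j\log N_j)$ to $B_d\rho(x^*)\Res_{x^*}(f)$. Because $N\mapsto S_N^K(f)$ is monotone non-decreasing and $N\log N$ is monotone, sandwiching $N\in[N_j,N_{j+1}]$ between the endpoints and letting $\eta\downarrow 0$ recovers the exact limit constant. The main obstacle I anticipate is the reduction step: since $f$ need not be monotone in $d(\cdot,x^*)$ (as emphasized after~\eqref{TrimSum}), the top-$K$ order statistics are not literally the $K$ nearest visits to $x^*$, so one must control the mismatch between the trimmed and truncated sums at the critical scale $r_N$ to precision $o(N\log N)$ almost surely. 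This is exactly where the slow-recurrence hypothesis enters, via a Borel--Cantelli argument on visits to slightly inflated balls $B_{(1+\delta)r_N}(x^*)$ together with the tight control of $f$ provided by~\eqref{Estf}.
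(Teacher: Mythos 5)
Your overall architecture (reduce the trimmed sum to a truncated ergodic sum, estimate mean and variance of the truncation, Chebyshev plus Borel--Cantelli along a subsequence, then sandwich by monotonicity) is the same as the paper's, and your mean computation, the variance bound $O(N^2)$ from Lemma~\ref{truncvarlem}, and the subsequence/sandwich step are all sound. The gap is in the reduction step, and it is caused by your choice of truncation radius.

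You take $r_N$ with $N\mu(B_{r_N}(x^*))\to K$, i.e.\ $r_N\asymp N^{-1/d}$. The discrepancy between $\hat S_N^K(f)$ and $S_N(\tilde f_N)$ is
\begin{equation*}
\sum_{n<N:\,T^nx\in B_{r_N}} f(T^nx)\;-\;\sum_{i=1}^K f(\hat x_i).
\end{equation*}
On the event $\{S_N(\one_{B_{r_N}})\le K\}$ this is $O(K\,r_N^{-d})=O(N)$, which is fine. But on $\{S_N(\one_{B_{r_N}})\ge K+1\}$ the discrepancy contains the terms $f(\hat x_{K+1}),\dots$, which are controlled only by $C_0\,d(\hat x_{K+1},x^*)^{-d}$ and can exceed $N\log N$ with non-negligible probability. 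Crucially, with your normalization $N\mu(B_{r_N})\to K$, the count $S_N(\one_{B_{r_N}})$ is asymptotically Poisson with mean $K$ (this is exactly Proposition~\ref{PLTprop}), so $\mu(S_N(\one_{B_{r_N}})\ge K+1)$ converges to a \emph{positive} constant. The bad event is therefore not even negligible in probability, let alone summable for Borel--Cantelli, and the claimed a.s.\ reduction $S_N^K(f)=S_N(\tilde f_N)+o(N\log N)$ fails as stated. The appeal to Section~\ref{nearsec} does not help here: that section only identifies $S_N^K$ with $\hat S_N^K$ (largest values versus closest visits); it says nothing about the comparison with a truncated sum.

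The fix is to truncate at a strictly smaller radius chosen inside the window where both requirements hold simultaneously: $N\mu(B_{\delta_N})\to 0$, so that $\mu(S_N(\one_{B_{\delta_N}})\ge K+1)\to 0$ at a summable rate (the paper gets $\ll(\log N)^{-3/2}$ from a second-moment bound on $\binom{S_N(\one_{B_{\delta_N}})}{2}$, using exponential mixing for well-separated pairs and slow recurrence for nearby pairs), while still $\delta_N^{-d}=o(N\log N)$, so that on the good event the leftover terms $f(\hat x_i)$ with $\hat x_i\notin B_{\delta_N}$ contribute $O(K\delta_N^{-d})=o(N\log N)$ deterministically. The paper's choice $\delta_N^{-d}=N\log N/(\log\log N)^6$ sits exactly in this window; with that choice your remaining steps go through essentially unchanged (the variance bound becomes $\ll N^2\log N/(\log\log N)^6$, still giving summable Chebyshev tails along a subsequence).
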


\begin{remark}
For the doubling map on $\mathbb T^1$, Theorem~\ref{lightSLLN} cannot be applied directly to the observable
$$f_c(x)=\frac{1}{|x-c|},$$
because $f_c$ is not $C^\kappa$ at the identified endpoint $0=1$, where $\kappa\geq 1$ is the regularity exponent fixed at the beginning of this section. It does, however, apply to $\hat f_c=f_c+\psi_c$, where $\psi_c$ is a bounded function, vanishing near $c$, chosen so that $\hat f_c$ is non-negative and $C^\kappa$  away from the singularity at $c$. Since adding a bounded function does not affect the lightly trimmed strong law, we obtain a lightly trimmed strong law for $f_c$ and Lebesgue-almost every $c\in[0,1]$, because almost every point is slowly recurrent. In contrast, for the observable $f(x)=1/x$, no lightly trimmed strong law holds \cite{Haynes14}, although an intermediately trimmed strong law does hold~\cite{Sch18}. Hence, while light trimming fails to yield a strong law for $1/x$, it does yield one for $1/|x-c|$ for Lebesgue-almost every $c\in[0,1]$.

This example also shows that the regularity of the observable away from the singularity is not restrictive: provided that the required $C^\kappa$ structure holds near the singularity, bounded irregularities away from the singularity can be removed by adding a bounded function supported away from it.
\end{remark}

A lightly trimmed strong law implies an untrimmed weak law. In the i.i.d.\ setting, this follows immediately from Kesten's results \cite{kestenweak}. In the ergodic setting, the implication follows provided that $f$ takes values comparable to the leading order of the trimmed sum only with probability $o(1/N)$; see \cite[Remark 7]{AS25}. The corresponding result in our setting is the following.

\begin{corollary}[Convergence in probability]\label{SNCP}
Under the assumptions of Theorem \ref{lightSLLN}, we have
$$
\lim_{N \to \infty} \mu\left( \left| \frac{S_N(f)}{N \log N} - \operatorname{Res}_{x^*}(f)B_d \rho(x^*)\right| > \varepsilon \right) = 0, \quad \forall \varepsilon > 0.
$$
\end{corollary}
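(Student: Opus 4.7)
The strategy is the standard route from a lightly trimmed strong law to an untrimmed weak law, exploiting the fact that with only one extreme value removed the partial sum is already well-behaved almost surely, and that the single removed term is negligible compared to the normalization $N \log N$ in probability.

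More concretely, I would proceed as follows. Write
\[
\frac{S_N(f)(x)}{N \log N} = \frac{S_N^1(f)(x)}{N \log N} + \frac{f(x_1^N)}{N \log N}.
\]
Theorem \ref{lightSLLN} applied with $K=1$ gives $S_N^1(f)/(N\log N) \to \operatorname{Res}_{x^*}(f)\,B_d\,\rho(x^*)$ almost surely, hence in probability. It therefore suffices to show that
\[
\frac{f(x_1^N)}{N \log N} = \frac{\max_{0 \le n < N} f(T^n x)}{N \log N} \xrightarrow[N \to \infty]{\mu} 0.
\]

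For this last step the key input is the tail asymptotic $\mu(f>t) \sim B_d \rho(x^*)\operatorname{Res}_{x^*}(f)\,t^{-1}$, which follows from Definition~\ref{PowSing} together with the estimate \eqref{Estf} and the continuity of $\rho$ at $x^*$ (the case $\alpha = \beta/d = 1$). Combined with the $T$-invariance of $\mu$ and a union bound, one obtains, for any fixed $\varepsilon>0$,
\[
\mu\!\left(\max_{0 \le n < N} f(T^n x) > \varepsilon N \log N\right) \le \sum_{n=0}^{N-1} \mu\bigl(f \circ T^n > \varepsilon N \log N\bigr) = N \,\mu(f > \varepsilon N \log N),
\]
and the right-hand side is asymptotically $B_d\rho(x^*)\operatorname{Res}_{x^*}(f)/(\varepsilon \log N)$, which tends to $0$. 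Putting this together with the almost sure convergence of $S_N^1(f)/(N \log N)$ yields the desired convergence in probability.

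There is no real obstacle here: the slow recurrence hypothesis and exponential mixing have already been absorbed into Theorem~\ref{lightSLLN}, so the only ingredient beyond that theorem is the tail estimate for $f$, which is immediate from the power-singularity assumption. The mild care needed is simply to justify the tail asymptotic $\mu(f>t)\asymp t^{-1}$ from \eqref{Estf}, which follows by sandwiching $\{f>t\}$ between balls of radii $((1\pm\epsilon(r))\operatorname{Res}_{x^*}(f)/t)^{1/\beta}$ centered at $x^*$ and using continuity of $\rho$; this is both standard and already implicit in the remark following Definition~\ref{PowSing}.
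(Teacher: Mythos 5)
Your proposal is correct and follows essentially the same route as the paper: apply the trimmed SLLN and then show the removed extreme term(s) are negligible in probability via a union bound combined with the tail asymptotic $\mu(f>t)\asymp t^{-1}$. The only (immaterial) difference is that you bound the maximum directly at the threshold $\varepsilon N\log N$, whereas the paper inserts the intermediate threshold $N\log\log N$; both give a vanishing bound.
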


\begin{theorem}\label{interSLLN}
Assume that the system $(M, \mathcal{B}(M), \mu, T)$ is exponentially mixing of all orders. Let $f \in C^{\kappa}(M \setminus \{x^*\})$ be a function with a power singularity of order $\operatorname{Ord}_{x^*}(f)=\beta>d$ at a slowly recurrent point $x^*\in M$. Suppose that $k:\mathbb{N}\to\mathbb{N}$ is nondecreasing, $k(N)=o(N)$, and there exists $\epsilon>0$ such that $\lim_{N\rightarrow \infty} \frac{k(N)}{(\log N)^{\epsilon}} = \infty.$ Then it holds that
\begin{equation*}
\lim_{N\rightarrow \infty} \frac{S_N^{k(N)}(f)(x)}{N^{\alpha} k(N)^{1-\alpha}} = \frac{\operatorname{Res}_{x^*}(f)}{\alpha-1} B_d^{\alpha} \rho(x^*)^{\alpha}\quad \text{for almost every } x,
\end{equation*}
where $\alpha=\frac{\beta}{d}$.
\end{theorem}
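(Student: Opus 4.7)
My plan is to reduce Theorem~\ref{interSLLN} to a strong law for a truncated ergodic sum with a deterministic cutoff, following the Haeusler--Mason scheme but with exponential mixing of all orders replacing independence. Choose $t_N$ by $\mu(f>t_N)=k(N)/N$; the tail asymptotic $\mu(f>t)\sim B_d\rho(x^*)\Res_{x^*}(f)^{1/\alpha}t^{-1/\alpha}$ from the remark after Definition~\ref{PowSing} gives $t_N\sim B_d^{\alpha}\rho(x^*)^{\alpha}\Res_{x^*}(f)(N/k(N))^{\alpha}$. Set
\[
\widetilde S_N(f)(x) := \sum_{n=0}^{N-1} f(T^n x)\,\mathbf{1}_{\{f(T^n x)\le t_N\}}.
\]
The results of Section~\ref{nearsec} should give $f(x_{k(N)}^N)\sim t_N$ almost surely (this is where the equivalence between ``removing the largest values'' and ``removing iterates close to $x^*$'' is exploited), whence a sandwich with cutoffs at $(1\pm\eta)t_N$ yields $S_N^{k(N)}(f)-\widetilde S_N(f)=o(N^{\alpha}k(N)^{1-\alpha})$ almost surely.

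For the mean, the tail-integral identity
\[
\E[f\,\mathbf{1}_{\{f\le t_N\}}] = \int_0^{t_N}\mu(f>s)\d s - t_N\,\mu(f>t_N),
\]
together with the tail asymptotic and $\alpha>1$, yields
\[
N\,\E[f\,\mathbf{1}_{\{f\le t_N\}}] \sim \frac{\Res_{x^*}(f)}{\alpha-1}\,B_d^{\alpha}\rho(x^*)^{\alpha}\,N^{\alpha}k(N)^{1-\alpha},
\]
which is exactly the target constant.

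For concentration I would use a $C^{\kappa}$ smoothing of $f\mathbf{1}_{\{f\le t_N\}}$ whose $C^{\kappa}$-norm grows only polynomially in $t_N$, and combine exponential mixing of all orders with the variance estimates of Section~\ref{Tr} to derive, for every integer $p\ge 1$, a cumulant-type bound
\[
\E\bigl(\widetilde S_N(f)-\E\widetilde S_N(f)\bigr)^{2p} \le C_p\bigl(N\,\E[f^2\,\mathbf{1}_{\{f\le t_N\}}]\bigr)^p.
\]
Since $\alpha>1/2$ gives $\E[f^2\mathbf{1}_{\{f\le t_N\}}]\asymp t_N^{2-1/\alpha}$, Chebyshev yields
\[
\mu\Bigl(\bigl|\widetilde S_N-\E\widetilde S_N\bigr|>\epsilon\,N^{\alpha}k(N)^{1-\alpha}\Bigr) \le C_{p,\epsilon}\,k(N)^{-p}.
\]
Evaluated along $N_j=2^j$, the hypothesis $k(N_j)\gg j^{\epsilon}$ makes these probabilities summable whenever $p>1/\epsilon$; Borel--Cantelli then gives almost sure convergence along $(N_j)$, and the gaps $N_j\le N<N_{j+1}$ are closed by monotonicity of $k(N)$, together with the boundedness of $N_{j+1}^{\alpha}k(N_{j+1})^{1-\alpha}/(N_j^{\alpha}k(N_j)^{1-\alpha})$ and the pointwise monotonicity $\widetilde S_{N_j}\le\widetilde S_N\le\widetilde S_{N_{j+1}}$ in the truncated sums (up to a harmless error from the $N$-dependence of $t_N$).

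I expect the main obstacle to be the trimmed-to-truncated reduction itself: the ranking defining $S_N^{k(N)}$ is a nonlocal, nonlinear functional of the orbit, and under the weaker hypothesis $k(N)\gg(\log N)^{\epsilon}$, rather than the i.i.d.\ threshold $\log\log N$, the $k(N)$-th largest value oscillates just enough that localising it within $(1\pm\eta)t_N$ with an $o(N^{\alpha}k(N)^{1-\alpha})$ error forces use of the higher-moment bounds above rather than pure Chebyshev. A subsidiary difficulty is that $f\mathbf{1}_{\{f\le t_N\}}$ is not $C^{\kappa}$, so the moment bound must be applied to a smooth surrogate while absorbing the regularization error below the normalization scale---this is exactly where exponentially mixing of \emph{all} orders, rather than merely pairwise, becomes essential.
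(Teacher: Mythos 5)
Your overall strategy coincides with the paper's: replace the trimmed sum by a deterministically truncated ergodic sum at the level $t_N$ with $\mu(f>t_N)=k(N)/N$, compute the mean (your constant is correct), control fluctuations by $2p$-th moment bounds of order $(N\,\E[f^2\mathbf{1}_{\{f\le t_N\}}])^p$ obtained from exponential mixing of all orders applied to smooth surrogates, and conclude by Borel--Cantelli along a subsequence. The paper does exactly this (Lemmas~\ref{strongsklem}, \ref{interTSAR}, \ref{interESTF}, \ref{momentlem}, \ref{momentlemind}), using balls $B_{r_N}(x^*)$ rather than superlevel sets, which is an immaterial difference here.

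However, your interpolation step contains a genuine gap. Along $N_j=2^j$ the exceptional probabilities $\ll k(N_j)^{-p}\ll j^{-p\epsilon}$ are indeed summable, but the sandwich
\begin{equation*}
\frac{\widetilde S_{N_j}}{N_{j+1}^{\alpha}k(N_{j+1})^{1-\alpha}}\;\le\;\frac{\widetilde S_N}{N^{\alpha}k(N)^{1-\alpha}}\;\le\;\frac{\widetilde S_{N_{j+1}}}{N_j^{\alpha}k(N_j)^{1-\alpha}},\qquad N_j\le N\le N_{j+1},
\end{equation*}
only yields the exact limit $\tfrac{\Res_{x^*}(f)}{\alpha-1}B_d^{\alpha}\rho(x^*)^{\alpha}$ if the normalization ratio $N_{j+1}^{\alpha}k(N_{j+1})^{1-\alpha}/\bigl(N_j^{\alpha}k(N_j)^{1-\alpha}\bigr)$ tends to $1$. \emph{Boundedness} of this ratio, which is all you invoke, only traps $\limsup$ and $\liminf$ within a fixed multiplicative window of the constant (already the factor $N_{j+1}^{\alpha}/N_j^{\alpha}=2^{\alpha}$ is irreparable), so you do not get a strong law with an identified constant. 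One is forced to take a subsequence with $N_{j+1}/N_j\to1$ that is still sparse enough for summability --- and then, since the normalization also involves $k(N)$ and the hypothesis $k(N)\gg(\log N)^{\epsilon}$ permits $k$ to jump between consecutive subsequence points, one must additionally refine the subsequence so that $k(N_{j+1})/k(N_j)\to1$ and introduce auxiliary trimming levels $k_{\pm}(N)$ to compare $S_N^{k(N)}$ with $S_{N_j}^{k_{\pm}(N_j)}$. This double refinement (the sequences $\tilde N_l=\lceil 2^{l^{\epsilon}}\rceil$ and $N_{l,j}$, together with $k_{\pm}$, in the paper's proof) is the substantive part of the argument beyond the lemmas, and it is precisely the step your proposal elides. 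The same deficiency affects your reduction from $S_N^{k(N)}$ to $\widetilde S_N$: the statement $f(x^N_{k(N)})\sim t_N$ a.s.\ is not enough, since the orbit points with $f$-value in $[(1-\eta)t_N,(1+\eta)t_N]$ number about $\eta k(N)/\alpha$ and each contributes $\asymp N^{\alpha}k(N)^{-\alpha}$, so their total is $\asymp\eta\,N^{\alpha}k(N)^{1-\alpha}$; you need a quantitative rate $\eta=\eta_N\to0$ (the paper gets $k(N)^{-\epsilon}$ from the moment bounds on the counting functions), and that quantitative version must again be run through the refined subsequence.
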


\begin{remark}\label{specrem}
The assumption of exponential mixing of all orders is used here as an abstract quantitative mixing condition. It can be established by several mechanisms. In standard expanding and hyperbolic settings, it often follows from spectral estimates for transfer operators on appropriate function spaces, together with suitable multiplication estimates. Examples include uniformly expanding maps, piecewise expanding maps, and volume-preserving Anosov diffeomorphisms~\cite{Bow08,Via97}. Alternatively, \cite[Theorem~A.3]{DFL22} derives exponential mixing of all orders from quantitative equidistribution along unstable leaves and applies, among other examples, to time-one maps of contact Anosov flows, mostly contracting systems, partially hyperbolic translations on homogeneous spaces, and partially hyperbolic automorphisms of nilmanifolds. Products and certain skew products are also treated in~\cite{DFL22}. Thus Theorem~\ref{interSLLN} uses only the multiple-correlation estimates stated above, independently of the mechanism by which they are obtained. It remains open whether ordinary exponential mixing implies exponential mixing of all orders~\cite[Question~1.7]{DKRH24}.

This distinction is relevant because, even when a spectral gap is available, existing intermediately trimmed SLLNs apply only to restricted classes of observables. In \cite{KessSchiSLLN}, the relevant condition, called property $\mathfrak{D}$, includes in particular the growth condition
$$\|f \cdot \one_{f \leq n}\| = O(n).$$
The issue is not the possible loss of smoothness under truncation, which can be handled by smooth approximation, but the growth of the norm after truncation. In derivative-based norms, such as $C^\kappa$ norms or the smooth anisotropic spaces of Gou\"{e}zel--Liverani \cite{GL06}, truncation near singularities may yield superlinear growth. For instance, $f(x)=1/x$ truncated at height $n$ has $C^1$ norm of order $n^2$. Thus the above linear bound is natural in low-regularity spaces such as the space of functions of bounded variation, but generally fails in the smooth function spaces used for uniformly hyperbolic systems. Consequently, the observables covered by \cite{KessSchiSLLN} are typically not available in these settings.
\end{remark}

\begin{remark}
For i.i.d.\ observations, as discussed in the introduction, a stronger statement holds: a trimmed SLLN applies whenever $k(N)=o(N)$ and
\begin{equation*}
\lim_{N\to\infty} \frac{k(N)}{\log \log N} = \infty.
\end{equation*}
The same condition should be attainable for Theorem~\ref{interSLLN} if one could establish Bernstein-type moderate deviation estimates for the shrinking-target counts $S_N\bigl(\one_{B_{r_N}(x^*)}\bigr)$ and for the truncated sums $S_N\bigl(f \cdot (1-\one_{B_{r_N}(x^*)})\bigr)$.
However, under our current assumption of exponential mixing of all orders, such estimates appear to lie beyond the reach of the moment method employed here. Moreover, it is not clear that they can be deduced from the mixing assumption alone, since quantitative mixing does not in general imply Bernstein-type inequalities of i.i.d.\ strength. Indeed, Adamczak~\cite[Section~3.3]{Ada08} constructs a Markov chain for which no uniform Bernstein inequality with an i.i.d.-optimal large-deviation term can hold over all bounded centered observables. One can verify that this chain is exponentially $\beta$-mixing. Since exponential $\beta$-mixing implies exponential mixing of every finite order for bounded coordinate observables, this suggests that exponential multiple mixing alone is insufficient to guarantee concentration inequalities of i.i.d.\ strength. We therefore restrict ourselves to the present formulation of the theorem and leave this strengthening for future work.
\end{remark}

\begin{remark}
Analogously, one can derive intermediate trimming for the case $\operatorname{Ord}_{x^*}(f) = d$. The corresponding statement is as follows: if $k(N) \rightarrow\infty$ and $k(N)=o(N)$ then
\begin{equation}\label{interSLLNd}
\lim_{N \to \infty} \frac{S_N^{k(N)}(f)(x)}{N \log \left(\frac{N}{k(N)} \right)} =\operatorname{Res}_{x^*}(f)B_d \rho(x^*) \quad \text{for almost every } x.
\end{equation}
However, given that Theorem \ref{lightSLLN} already establishes the strong law under light trimming, in what can be regarded as its strongest form with $K=1,$ the result \eqref{interSLLNd} is not the most interesting one. We therefore omit the proof and leave it as an exercise for the interested reader.

It should be noted, however, that \eqref{interSLLNd} does not follow from the seemingly stronger light trimming result in Theorem~\ref{lightSLLN}. Indeed, as shown in \cite{AS25}, there exist functions for which trimming is not stable under removal of additional terms: an SLLN may hold for some trimming sequence $k(N)$, but fail if more terms are removed, say $k'(N) > k(N)$.
\end{remark}

\begin{theorem}\label{lighttrimthm}
Assume that the system $(M, \mathcal{B}(M), \mu, T)$ is exponentially mixing of all orders. Let $f \in C^{\kappa}(M \setminus \{x^*\})$ be a function with a power singularity of order $\operatorname{Ord}_{x^*}(f)=\beta>\frac{d}{2}$ at a slowly recurrent point $x^*\in M$. Then, for every fixed $K\geq 0$, there exist constants $a_N$ and a non-degenerate distribution $Y$ such that
\begin{equation*}
\frac{S_N^K(f) - a_N}{N^{\alpha}} \Rightarrow Y \;\;\;\as{N},
\end{equation*}
where $\alpha=\frac{\beta}{d}$. Furthermore, it holds that
\begin{align*}
\lim_{N\to\infty} \frac{a_N}{N}
&= \int_M f\,\mathrm{d}\mu,
&& \text{if}\;\; \tfrac12<\alpha<1,\\[0.5ex]
\lim_{N\to\infty} \frac{a_N}{N\log N}
&= \Res_{x^*}(f)\, B_d\, \rho(x^*),
&& \text{if}\;\; \alpha=1,\\[0.5ex]
a_N
&= 0,
&& \text{if}\;\; \alpha>1.
\end{align*}
\end{theorem}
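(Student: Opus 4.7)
The plan is to fix a radius $r_N = c N^{-1/d}$ and decompose $S_N(f) = S_N^{\text{far}} + S_N^{\text{near}}$ according to whether $T^n x$ lies outside or inside $B_{r_N}(x^*)$. With this choice, values of $f$ on $B_{r_N}(x^*)$ are of order $r_N^{-\beta} = N^{\alpha}$, and the variance estimate from Section~\ref{Tr} will give $\Var(S_N^{\text{far}}) = O(N r_N^{d-2\beta}) = O(N^{2\alpha})$, so both contributions fluctuate on scale $N^{\alpha}$ (the hypothesis $\beta > d/2$ being precisely what keeps this balance finite). By the equivalence from Section~\ref{nearsec}, the top-$K$ order statistics of $\{f(T^n x)\}_{n<N}$ are, up to vanishing error, achieved at the $K$ orbit visits closest to $x^*$ and hence lie in the near part; consequently
\begin{equation*}
S_N^K(f) = S_N^{\text{far}} + \bigl(S_N^{\text{near}} - T_N^{(K)}\bigr) + o(N^{\alpha}),
\end{equation*}
where $T_N^{(K)}$ denotes the sum of the top $K$ near values.

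Next I would invoke the Poisson point process convergence of Section~\ref{PoiLgT}: the rescaled empirical process $\sum_{n<N} \delta_{f(T^n x)/N^{\alpha}}$ converges to a PPP $\Pi$ on $(0,\infty)$ with intensity $\nu(\d y) = \alpha^{-1} B_d \rho(x^*) \Res_{x^*}(f)^{1/\alpha} y^{-1/\alpha - 1}\d y$, read off from $\mu(f>t) \sim B_d \rho(x^*) \Res_{x^*}(f)^{1/\alpha} t^{-1/\alpha}$; exponential mixing of all orders, together with slow recurrence of $x^*$, will produce the Poissonian independence and rule out clustering. Applying the continuous functional built in Section~\ref{SumPP}---a compensated sum of points with the top $K$ removed---yields $(S_N^{\text{near}} - T_N^{(K)} - c_N)/N^{\alpha} \Rightarrow Y$ for a non-degenerate distribution $Y$. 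In parallel, $S_N^{\text{far}}$ is the ergodic sum of the bounded $C^{\kappa}$ function $f \one_{B_{r_N}(x^*)^c}$, so the variance bound provides tightness of $(S_N^{\text{far}} - \E S_N^{\text{far}})/N^{\alpha}$.

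The centering $a_N = \E S_N^{\text{far}} + c_N$ would then be identified by computing $\E S_N^{\text{far}} = N \int_{B_{r_N}(x^*)^c} f\d\mu$ via polar coordinates near $x^*$: the integral $\int_{r_N}^{1} r^{d-1-\beta}\d r$ is bounded for $\alpha < 1$ (so $a_N/N \to \E f$), equals $(\log N)/d$ for $\alpha = 1$ (yielding $a_N/(N\log N) \to B_d \rho(x^*)\Res_{x^*}(f)$ once the polar surface prefactor $d B_d$ is combined in), and is $\asymp N^{\alpha-1}$ for $\alpha>1$, producing an $O(N^{\alpha})$ term that is absorbed into $Y$ through $c_N$ so that $a_N=0$. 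The main technical obstacle will be proving \emph{joint} convergence of the near and far contributions, since they are functionals of the same orbit: a natural strategy is a block decomposition in which the rare visits to $B_{r_N}(x^*)$ are isolated in short windows, after which successive applications of \eqref{ballretasm:mem} factor the joint characteristic function across near and far blocks with errors summable in $N$, yielding the asymptotic independence required so that $(S_N^K(f) - a_N)/N^{\alpha}$ converges to the non-degenerate limit of the theorem.
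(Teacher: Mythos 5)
Your overall architecture (near/far decomposition at a radius of order $N^{-1/d}$, Poisson limit for the near part, the functional from Section~\ref{SumPP}, variance control of the far part) matches the paper's, but there is a genuine gap in how you treat the far part, and it stems from fixing the cutoff radius at $r_N = cN^{-1/d}$ with $c$ constant. With that choice your own computation gives $\Var(S_N^{\mathrm{far}}) \asymp N^{2\alpha}$, i.e.\ the far part fluctuates on \emph{exactly} the normalization scale $N^{\alpha}$ and does not degenerate. Tightness of $(S_N^{\mathrm{far}}-\E S_N^{\mathrm{far}})/N^{\alpha}$ is then not enough: to conclude $(S_N^K(f)-a_N)/N^{\alpha}\Rightarrow Y$ you would need (a) an actual distributional limit theorem for the far part (itself a nontrivial stable-type limit for a truncated power singularity), (b) genuine joint convergence with the near part rather than the sketched block/characteristic-function argument, and (c) a proof that the resulting limit does not depend on the arbitrary constant $c$. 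None of these is supplied. A second concrete failure of the fixed cutoff: the number of orbit visits to $B_{cN^{-1/d}}(x^*)$ is asymptotically Poisson with a \emph{fixed} mean, so with probability bounded away from $0$ fewer than $K$ points fall in the near region, and then some of the top-$K$ order statistics come from the far part; your identity $S_N^K(f)=S_N^{\mathrm{far}}+(S_N^{\mathrm{near}}-T_N^{(K)})+o(N^{\alpha})$ fails on an event of non-vanishing probability.

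The paper avoids all of this by taking the cutoff at radius $R^{1/d}r_N$ and letting $R=R_N\to\infty$ slowly: then $\Var(S_N(\bar f_{N,R}))\ll N^{2\alpha}R^{1-2\alpha}\to 0$ after dividing by $N^{2\alpha}$ (this is where $\alpha>\tfrac12$ enters), so the far part contributes only its mean and no asymptotic-independence argument is needed; simultaneously $\mu(S_N(\one_{B_{R^{1/d}r_N}})\le K-1)\ll 1/R\to 0$, which repairs the splitting identity. The limit $Y$ is then produced entirely by the near part via the trimmed Poisson point process and Proposition~\ref{PPPconprop}, with a diagonal choice of $R_N$, $m_N$ and the sandwich lemma. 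To fix your proof you should replace the fixed $c$ by a slowly growing $R_N$; the rest of your outline (including the identification of $a_N=\E S_N^{\mathrm{far}}+c_N$ and the three asymptotic regimes for $a_N$) then goes through essentially as in the paper. Note also that the reduction from $S_N^K$ to the geometrically trimmed sum needs the quantitative estimate of Lemma~\ref{nearlightlem}, not just the almost-sure asymptotic equivalence of Section~\ref{nearsec}, since a.s.\ ratio convergence does not by itself control the difference at scale $N^{\alpha}$ in a distributional statement.
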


\begin{remark}
As noted in the introduction, the normalizing sequences $a_N$ and $N^\alpha$ do not depend on the trimming number $K$, similarly to the i.i.d.\ case studied by Kesten~\cite{kestenweak}. The limit $Y$, however, does depend on $K$. More specifically, $Y$ is the light-trimming limit generated by the limiting Poisson process of rare visits to the singularity $x^*$, as described precisely in Proposition~\ref{PPPconprop}.

This is the same limiting mechanism that appears in the classical i.i.d.\ light-trimming theory in the stable-domain setting studied by Hall~\cite{Hal78}. In particular, the limiting distribution $Y$ coincides with that obtained in the i.i.d.\ case with the same marginal distribution. In the present setting, the Poisson process description yields the corresponding light-trimming limit, with the scale determined by the local tail asymptotics of $f$ near $x^*$.
\end{remark}

\begin{remark}
The case $K=0$ corresponds to the untrimmed setting, where stable limit laws are expected. Although this is likely well known and can be derived by combining the Poisson limit theorem from \cite{DFL22} with the approach of Tyran-Kami\'{n}ska \cite{Tyr10}, we include it in the statement for completeness.
\end{remark}

\begin{theorem}\label{intertrimthm}
Assume that the system $(M, \mathcal{B}(M), \mu, T)$ is exponentially mixing of all orders. Let $f \in C^{\kappa}(M \setminus \{x^*\})$ be a function with a power singularity of order $\operatorname{Ord}_{x^*}(f)=\beta>\frac{d}{2}$ at a slowly recurrent point $x^*\in M$.
Suppose $k(N)$ is a function with $k(N)\rightarrow\infty$ as $N\rightarrow\infty$, and $k(N)=o(N).$
Then there exist constants $a_N$ such that
\begin{equation*}
\frac{S_N^{k(N)}(f) - a_N}{\operatorname{Res}_{x^*}(f) B_d^{\alpha} \rho(x^*)^{\alpha} N^{\alpha} k(N)^{\frac{1}{2} - \alpha}} \Rightarrow \mathcal{N}(0,\sigma_{\alpha}^2) \;\;\;\as{N},
\end{equation*}
where $\alpha=\frac{\beta}{d}$ and
$$
\sigma_{\alpha}^2= \frac{2\alpha}{2\alpha - 1}.
$$
Furthermore, the normalizing constants $a_N$ satisfy
\begin{align*}
\lim_{N\to\infty} \frac{a_N}{N}
&= \int_M f\,\mathrm{d}\mu,
&& \text{if}\;\;\tfrac12<\alpha<1,\\[0.5ex]
\lim_{N\to\infty} \frac{a_N}{N\log\!\bigl(\tfrac{N}{k(N)}\bigr)}
&= \Res_{x^*}(f)\, B_d\, \rho(x^*),
&& \text{if}\;\; \alpha=1,\\[0.5ex]
\lim_{N\to\infty} \frac{a_N}{N^{\alpha} k(N)^{1-\alpha}}
&= \frac{\Res_{x^*}(f)}{\alpha-1}\,
  B_d^{\alpha}\, \rho(x^*)^{\alpha},
&& \text{if}\;\; \alpha>1.
\end{align*}
\end{theorem}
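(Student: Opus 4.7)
The plan is to reduce the intermediately trimmed sum to a bounded, censored ergodic sum to which a CLT can be applied. Fix the threshold $c_N$ by $N\mu(f > c_N) = k(N)$; the tail asymptotic $\mu(f>t) \sim B_d\, \rho(x^*)\, \operatorname{Res}_{x^*}(f)^{1/\alpha}\, t^{-1/\alpha}$ gives $c_N \sim \operatorname{Res}_{x^*}(f)\, B_d^{\alpha}\, \rho(x^*)^{\alpha}\, (N/k(N))^{\alpha}$. Introduce the censored observable $g_N := \min(f, c_N)$, bounded by $c_N$, and let $U_t := \#\{n < N : f(T^n x) > t\}$ denote the exceedance count. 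By the layer-cake identity,
$$
S_N^{k(N)} = \int_0^{v_{k(N)+1}}\!\bigl(U_t - k(N)\bigr)\,\d t, \qquad \sum_{n=0}^{N-1} g_N(T^n x) - k(N)\, c_N = \int_0^{c_N}\!\bigl(U_t - k(N)\bigr)\,\d t,
$$
where $v_{k(N)+1} = f(x_{k(N)+1}^N)$. Their difference is the integral of $U_t - k(N)$ over the short interval between $c_N$ and $v_{k(N)+1}$. Using the Poisson-type approximation for orbit visits to the shrinking ball $B_{r_N}(x^*)$ developed in Section~\ref{PoiLgT}, together with slow recurrence, one obtains $|v_{k(N)+1} - c_N| = O_p(c_N/\sqrt{k(N)})$ and $|U_t - k(N)| = O_p(\sqrt{k(N)})$ uniformly there; this yields a discrepancy of $O_p(c_N) = o(b_N)$, where $b_N := \operatorname{Res}_{x^*}(f)\, B_d^{\alpha}\, \rho(x^*)^{\alpha}\, N^{\alpha}\, k(N)^{1/2 - \alpha}$ satisfies $b_N/c_N = \sqrt{k(N)}$.

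The problem thus reduces to a CLT for $G_N := \sum_{n=0}^{N-1} g_N(T^n x)$. The identity $\E G_N - k(N)\,c_N = N\, \E[f\,\one_{f \le c_N}]$, combined with the tail asymptotic, produces via a case analysis the stated $a_N$ in each of the three regimes $\tfrac12 < \alpha < 1$, $\alpha = 1$, and $\alpha > 1$. A layer-cake computation gives $\E g_N^2 \sim \tfrac{2\alpha}{2\alpha-1}\, B_d\, \rho(x^*)\, \operatorname{Res}_{x^*}(f)^{1/\alpha}\, c_N^{2-1/\alpha}$, and since $(\E g_N)^2 = o(\E g_N^2)$ in each regime, $N\Var(g_N) \sim \sigma_\alpha^2\, b_N^2$ with $\sigma_\alpha^2 = 2\alpha/(2\alpha-1)$. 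The variance estimates of Section~\ref{Tr}, together with exponential mixing, ensure that the cross-covariance terms $\sum_{j \ge 1} \Cov(g_N, g_N \circ T^j)$ are of lower order. The CLT itself follows by the method of cumulants: exponential mixing of all orders makes the $m$-th cumulant of $G_N$ of order $o(b_N^m)$ for $m \ge 3$, forcing a Gaussian limit.

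The main obstacle is executing the cumulant argument for the $N$-dependent observable $g_N$, whose $C^{\kappa}$-norm grows polynomially in $N$ owing to the sharp kink at the level set $\{f = c_N\}$. This necessitates a smoothing step, replacing $g_N$ by a $C^{\kappa}$ approximation that agrees with it outside a thin annulus around $\partial B_{r_N}(x^*)$, and a careful balance of the polynomial norm growth against the exponential decay rate $\gamma$ in the mixing bound \eqref{ballretasm:mem}. The assumption $\alpha > 1/2$ is precisely what guarantees that the second cumulant dominates and that both the smoothing error and the exceedance discrepancy $\mathcal{E}_N$ are $o(b_N)$.
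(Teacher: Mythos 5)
Your proposal is correct in substance and, once unpacked, coincides with the paper's strategy: writing $g_N=\min(f,c_N)=f\,(1-\one_{\Sigma_{c_N}})+c_N\one_{\Sigma_{c_N}}$ shows that your centered censored sum $G_N-k(N)c_N$ is exactly the paper's $S_N(f_N)+\lambda_N\bigl(S_N(\one_{\Sigma_{\lambda_N}})-k(N)\bigr)$, and your variance identity $N\Var(g_N)\sim\frac{2\alpha}{2\alpha-1}b_N^2$ is the sum $1+\frac{1}{2\alpha-1}$ that the paper obtains from the two asymptotically independent components. The genuine difference is in the reduction step: the paper controls $W_N=S_N(f\one_{\Sigma_{\lambda_N}})-\sum_i f(x_i)$ by a two-sided sandwich (\eqref{Wntrivbound}--\eqref{Wnnegbound}) plus the multidimensional squeeze Lemma~\ref{sandwichhighlem}, and must prove a \emph{joint} CLT with independence of the two components (Lemma~\ref{CLTlem}); your layer-cake identity handles trimming-by-value automatically and reduces everything to a \emph{univariate} CLT for $G_N$, which is a cleaner packaging since disjointness of supports makes the cross-covariance trivially lower order. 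Two points need tightening. First, your uniform bound $|U_t-k(N)|=O_p(\sqrt{k(N)})$ on the random interval should be justified by monotonicity of $t\mapsto U_t$ (reduce to the two endpoints $c_N(1\pm A/\sqrt{k(N)})$) together with variance bounds for $S_N(\one_{\Sigma_t})$, and the bound $|v_{k(N)+1}-c_N|=O_p(c_N/\sqrt{k(N)})$ is a tightness statement (let $A\to\infty$ after $N\to\infty$); the tool for this is \emph{not} the Poisson approximation of Section~\ref{PoiLgT}, which governs the light-trimming regime with $O(1)$ expected visits, but the Gaussian-regime moment estimates (Corollary~\ref{normalcor} and the superlevel-set analogues of Lemma~\ref{momentlem}). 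Second, the cumulant/moment computation for the $N$-dependent, non-smooth $g_N$ is where essentially all of the paper's work lies: beyond the smoothing-versus-mixing tradeoff you identify, the short-range correlations $\Cov(g_N,g_N\circ T^j)$ for $1\le j\ll\log N$ cannot be beaten by mixing alone and require the slow-recurrence hypothesis applied to $\Sigma_{c_N}$ (Lemma~\ref{slowrecanlem} and the cluster analysis in Lemma~\ref{CLTlem}); your sketch gestures at this via Section~\ref{Tr} but should make explicit that this is where slow recurrence enters the CLT itself, not only the discrepancy estimate.
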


\part{Proofs}

\section{Large observations and closest points}\label{nearsec}

Let us revisit an issue mentioned in the introduction: the conceptual
relationship between trimming based on extremal function values and
trimming based on geometric proximity to a singularity.
For the canonical radial example
$f(x)=1/\|x\|,$ $x=(x_1,x_2)\in B_1(0)\setminus\{0\}\subset\mathbb R^2,$ ordering by function value is
equivalent to ordering by distance from the singularity. This equivalence
fails in general. Indeed, for
$f_\delta(x)=(1+\delta x_1)/\|x\|$ with $0<\delta<1$, the normalized
singularity profile $g_\delta(x)=f_\delta(x)\|x\|=1+\delta x_1$ extends
smoothly across the origin, yet there exist points arbitrarily close to the
origin whose ordering by distance is opposite to their ordering by function
value. For instance,
for $p_r=(-r,0)$ and $q_r=(r+\delta r^2,0)$, with $r>0$ sufficiently
small, one has $d(p_r,0)<d(q_r,0)$ but $f_\delta(p_r)<f_\delta(q_r).$

Despite this potential discrepancy, the asymptotic behavior often
remains similar. We prove that the trimmed sum is
asymptotically equivalent to the sum obtained by removing points
closest to the singularity, allowing us to work with this geometric
notion of trimming in the proofs that follow.

Consider $k$ as either a constant $K$ or a function $k(N)$.
Let $\hat{S}_N^k(f)(x)$ denote the sum obtained by removing the $k$ closest visits to $x^*$, formally defined as
\begin{equation}\label{hatSdef}
\hat{S}_N^k(f)(x)=S_N(f)(x) - \sum_{l=1}^k f(\hat{x}_l),
\end{equation}
where $\hat{x}_l$ is the $l$th closest point to $x^*$ among $\{x,\ldots,T^{N-1}(x)\}$. By definition of $S_N^k,$ it clearly holds that
\begin{equation*}
S_N^k(f) \leq \hat{S}_N^k(f).
\end{equation*}
On the other hand, let $x_1,\dots,x_k$ be the points that maximize $f$ among $\{x,\ldots,T^{N-1}(x)\},$ ordered by increasing distance to $x^*$ so that
\begin{align*}
&d(x_1,x^*)\leq d(x_2,x^*) \leq \cdots \leq d(x_k, x^*),\\
& f(x_i) \geq f(T^j(x)) \quad \text{for } 1\leq i\leq k,\,\,0\leq j\leq N-1,\,\, T^j(x)\not\in \left\{x_1,\dots,x_k\right\}.
\end{align*}
Then we have
\begin{equation*}
\hat{S}_N^k(f)(x)-S_N^k(f)(x)=\sum_{i=1}^kf(x_i)-\sum_{i=1}^k f(\hat{x}_i).
\end{equation*}
If $\{x_1,\ldots,x_k\}=\{\hat{x}_1,\ldots,\hat{x}_k\}$, then the sums coincide. Otherwise, let $1\leq m\leq k$ be the number of points $x_i$ not in $\{\hat{x}_1,\ldots,\hat{x}_k\}$.
Since $x$ is almost surely not eventually periodic, no two points in the orbit of such an $x$ coincide. Therefore, there are also exactly $m$ points among the $\hat{x}_i$ not contained in $\{x_1,\ldots,x_k\}$.
Relabel these points so that $\{y_1,\ldots,y_m\}=\{x_1,\ldots,x_k\}\setminus\{\hat{x}_1,\ldots,\hat{x}_k\},$ and likewise for $\{\hat{y}_1,\ldots,\hat{y}_m\}$, keeping the original order, although the order itself is not essential for the
subsequent argument.

For any $i,j\in\{1,\dots,m\}$, by construction we have $f(y_i)\geq f(\hat{y}_j)$ since $y_i$ was chosen among the largest values. On the other hand, by \eqref{Estf} we have
\begin{align*}
f(y_i) &\leq \biggl(1+\epsilon\Bigl(d(x^*, y_i)\Bigr)\biggr) \mathrm{Res}_{x^*}(f)d(x^*, y_i)^{-\beta} \\
& \leq\biggl(1+\epsilon\Bigl(d(x^*,x_k)\Bigr)\biggr) \mathrm{Res}_{x^*}(f)d(x^*, \hat{y}_j)^{-\beta} \leq \frac{1+\epsilon\Bigl(d(x^*,x_k)\Bigr)}{1-\epsilon\Bigl(d(x^*,\hat{x}_k)\Bigr)} f(\hat{y}_j).
\end{align*}
At the same time $\sum_{i=1}^m f(\hat{y}_i)\leq S_N^k(f)(x)$ and therefore
\begin{align*}
\hat{S}_N^k(f)(x)-S_N^k(f)(x)&=\sum_{i=1}^m (f(y_i)-f(\hat{y}_i))\\
& \leq \left( \frac{1+\epsilon\Bigl(d(x^*,x_k)\Bigr)}{1-\epsilon\Bigl(d(x^*,\hat{x}_k)\Bigr)}-1\right) \sum_{i=1}^m f(\hat{y}_i)\\
&\leq \left(\frac{1+\epsilon\Bigl(d(x^*,x_k)\Bigr)}{1-\epsilon\Bigl(d(x^*,\hat{x}_k)\Bigr)}-1\right) S_N^k(f)(x),
\end{align*}
and we conclude
\begin{equation}\label{hatSineq}
\hat{S}_N^k(f)(x) \leq \frac{1+\epsilon\Bigl(d(x^*,x_k)\Bigl)}{1-\epsilon\Bigl(d(x^*,\hat{x}_k)\Bigl)} S_N^k(f)(x).
\end{equation}

\begin{lemma}\label{hatxklem}
Let $k(N)=o(N),$ and suppose that $f$ has a power singularity at $x^*$. Then, for almost every $x,$ it holds that
\begin{equation*}
d\Bigl(x^*,\hat{x}_{k(N)}\Bigr) \rightarrow 0 \quad \as{N}.
\end{equation*}
\end{lemma}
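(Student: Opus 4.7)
The statement is essentially an ergodic-theoretic "pigeonhole": if a positive fraction of orbit points must enter every neighborhood of $x^*$, then the $k(N)$-th closest point must be arbitrarily close once $k(N)$ is outgrown by that fraction. So the plan is to apply Birkhoff's ergodic theorem to a countable family of indicator functions $\mathbf{1}_{B_{r_m}(x^*)}$ for a sequence $r_m \downarrow 0$ and then invert the inequality defining $\hat{x}_{k(N)}$.

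First, I note that exponential mixing (inequality \eqref{ballretasm:mem} for $n=2$) implies that $(M,\mathcal{B}(M),\mu,T)$ is mixing, and in particular ergodic. Since $\rho$ is continuous and $\rho(x^*)>0$ (implicit from the setup, as otherwise the main theorems have a trivial right-hand side), there is a neighborhood of $x^*$ on which $\rho$ is bounded below by a positive constant, so $\mu(B_r(x^*))>0$ for every $r>0$ small enough. Fix a sequence $r_m\downarrow 0$ with $\mu(B_{r_m}(x^*))>0$ for all $m\ge 1$. By Birkhoff's ergodic theorem applied to $\mathbf{1}_{B_{r_m}(x^*)}$, there is a $\mu$-full measure set $\Omega_m$ on which
\begin{equation*}
\frac{1}{N}\sum_{n=0}^{N-1}\mathbf{1}_{B_{r_m}(x^*)}(T^n x)\;\longrightarrow\;\mu(B_{r_m}(x^*))\quad\text{as }N\to\infty.
\end{equation*}
Set $\Omega=\bigcap_{m\ge 1}\Omega_m$, still of full $\mu$-measure.

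Now fix $x\in\Omega$ and any $m\ge 1$. Since $k(N)=o(N)$ and $\mu(B_{r_m}(x^*))>0$, there exists $N_0=N_0(x,m)$ such that for all $N\ge N_0$,
\begin{equation*}
k(N)\;<\;\tfrac{1}{2}N\mu(B_{r_m}(x^*))\;\le\;\sum_{n=0}^{N-1}\mathbf{1}_{B_{r_m}(x^*)}(T^n x).
\end{equation*}
In other words, strictly more than $k(N)$ of the orbit points $\{x,Tx,\ldots,T^{N-1}x\}$ lie in $B_{r_m}(x^*)$, and therefore the $k(N)$-th closest visit to $x^*$ must itself lie in $B_{r_m}(x^*)$, giving $d(x^*,\hat{x}_{k(N)})<r_m$ for all $N\ge N_0$. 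Since $m$ is arbitrary, $d(x^*,\hat{x}_{k(N)})\to 0$ as $N\to\infty$.

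There is no serious obstacle; the only point that deserves care is the positivity $\mu(B_r(x^*))>0$ for all small $r$, which follows from continuity of $\rho$ together with $\rho(x^*)>0$ (the latter being the natural assumption under which the main theorems are non-degenerate).
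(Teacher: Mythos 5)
Your proof is correct and follows essentially the same route as the paper: apply Birkhoff's ergodic theorem to indicators of a countable family of shrinking balls around $x^*$ (the paper uses radii $2^{-l}$), observe that $k(N)=o(N)$ forces more than $k(N)$ orbit points into each such ball for large $N$, and conclude that $\hat{x}_{k(N)}$ lies in the ball. Your extra remark on why $\mu(B_r(x^*))>0$ (via continuity of $\rho$ and $\rho(x^*)>0$) is a reasonable clarification of a hypothesis the paper leaves implicit.
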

\begin{proof}
Let $G$ be the set of points $x\in M$ satisfying
\begin{equation*}
\lim_{N\rightarrow\infty} \frac{S_N\Bigl(\one_{B_{2^{-l}}(x^*)}\Bigr)(x)}{N} = \mu\Bigl(B_{2^{-l}}(x^*)\Bigr) \quad \forall l\in\N.
\end{equation*}
By the Ergodic Theorem, $\mu(G)=1$. For $x\in G$ and $\varepsilon>0$, choose $l$ sufficiently large so that $2^{-l} < \varepsilon$, then for all sufficiently large $N$ it follows that
\begin{equation*}
S_N\Bigl(\one_{B_{2^{-l}}(x^*)}\Bigr)(x) > k(N).
\end{equation*}
This implies that all $k(N)$ closest points $\hat{x}_1,\dots,\hat{x}_{k(N)}$ lie in $B_{2^{-l}}(x^*)$, so $d\bigl(\hat{x}_{k(N)},x^*\bigr) \leq 2^{-l}$.
\end{proof}

\begin{lemma}\label{xklem}
Let $k(N)=o(N)$ and suppose $f$ has a power singularity at $x^*$. Then for almost every $x,$
\begin{equation}\label{mdsay}
d\bigl(x^*,x_{k(N)}\bigr) \rightarrow 0 \quad \as{N}.
\end{equation}
\end{lemma}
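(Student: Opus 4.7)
The plan is to leverage Lemma~\ref{hatxklem} to exhibit $k(N)$ orbit points with very large $f$-values, and then use the global upper bound \eqref{DeCf} to force the points attaining the top $k(N)$ values of $f$ to cluster near $x^*$. The key observation is that among $x_1,\dots,x_{k(N)}$, the $f$-values are all at least as large as the $k(N)$-th order statistic $V_N$, so controlling $V_N$ from below already controls all of them, including $x_{k(N)}$.

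Concretely, fix $x$ in the full-measure set on which Lemma~\ref{hatxklem} holds, so that $d(x^*,\hat{x}_{k(N)})\to 0$. For $N$ large enough that $d(x^*,\hat{x}_{k(N)})<r_0$ for a chosen small $r_0>0$, all of $\hat{x}_1,\dots,\hat{x}_{k(N)}$ lie in $B_{r_0}(x^*)$, and by the lower bound in \eqref{Estf},
\begin{equation*}
f(\hat{x}_i) \;\geq\; (1-\epsilon(r_0))\,\Res_{x^*}(f)\, d(\hat{x}_i,x^*)^{-\beta} \;\geq\; (1-\epsilon(r_0))\,\Res_{x^*}(f)\, d(\hat{x}_{k(N)},x^*)^{-\beta}
\end{equation*}
for every $1\leq i\leq k(N)$. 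Thus the orbit contains at least $k(N)$ points whose $f$-values exceed this quantity, which diverges to $+\infty$ by Lemma~\ref{hatxklem}. Consequently the $k(N)$-th largest value of $f$ along the orbit tends to infinity, and since $f(x_i)$ is at least this threshold for every $i\leq k(N)$, in particular $f(x_{k(N)})\to\infty$.

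Combining this with the global bound \eqref{DeCf}, $f(x_{k(N)})\leq C_0\, d(x_{k(N)},x^*)^{-\beta}$, yields
\begin{equation*}
d(x_{k(N)},x^*)\;\leq\;\bigl(C_0/f(x_{k(N)})\bigr)^{1/\beta}\;\longrightarrow\;0,
\end{equation*}
which is \eqref{mdsay}. The main subtlety, and really the only thing to keep track of, is the distinction between the two orderings in play: $\{x_i\}$ is indexed by distance to $x^*$ while the defining property of the set $\{x_1,\dots,x_{k(N)}\}$ is that it collects the top $k(N)$ values of $f$. Once one recognizes that every element of this set, and in particular the one farthest from $x^*$, must carry an $f$-value at least as large as the $k(N)$-th order statistic, the singular behavior of $f$ near $x^*$ closes the argument essentially for free.
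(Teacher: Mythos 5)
Your proof is correct and follows essentially the same two-step route as the paper: establish $f(x_{k(N)})\to\infty$ almost surely, then invoke the global bound \eqref{DeCf}. The only cosmetic difference is that the paper reruns the ergodic-theorem argument of Lemma~\ref{hatxklem} directly on the superlevel sets $\{f>R\}$, whereas you reuse Lemma~\ref{hatxklem} for balls and convert distances into $f$-values via the lower bound in \eqref{Estf}; both yield the same threshold statement.
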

\begin{proof}
Applying the same argument as in Lemma \ref{hatxklem} to the sets $\{f>R\}$, where $R>0$, instead of balls, yields that almost surely $f(x_{k(N)})\rightarrow\infty$ as $N\rightarrow\infty$. Keeping in mind that $f(y) \leq C_0 d(x^*,y)^{-\beta}$ for all points $y$, the claim follows.
\end{proof}

\begin{lemma}\label{xkhatxklem}
Let $k(N)=o(N)$ and suppose that $f$ has a power singularity at $x^*$. Then
\begin{equation*}
d\bigl(x^*,\hat{x}_{k(N)}\bigr) \sim d\bigl(x^*,x_{k(N)}\bigr)
\end{equation*}
for almost every $x.$
\end{lemma}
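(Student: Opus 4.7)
The plan is to compare the two radii $r := d(x^*, \hat{x}_{k(N)})$ and $r' := d(x^*, x_{k(N)})$ directly using the two-sided bound \eqref{Estf} supplied by the power singularity. By Lemmas~\ref{hatxklem} and~\ref{xklem}, for almost every $x$ both radii tend to zero as $N\to\infty$, so \eqref{Estf} is applicable for all sufficiently large $N$.

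One direction is immediate. The $k(N)$ orbit points $x_1,\ldots,x_{k(N)}$ are distinct (since $x$ is aperiodic almost surely) and all satisfy $d(x^*,x_i)\leq d(x^*,x_{k(N)})=r'$, so the closed ball $\overline{B_{r'}(x^*)}$ contains at least $k(N)$ orbit points, forcing the $k(N)$-th closest orbit point to lie within distance $r'$; that is, $r\leq r'$. For the reverse direction, the idea is to squeeze $f(x_{k(N)})$ between a lower bound coming from the $k(N)$ closest points and an upper bound at $x_{k(N)}$ itself. For each $i\leq k(N)$, since $d(x^*,\hat{x}_i)\leq r$, the bound \eqref{Estf} gives
$$f(\hat{x}_i)\geq (1-\epsilon(r))\,\Res_{x^*}(f)\,d(x^*,\hat{x}_i)^{-\beta}\geq (1-\epsilon(r))\,\Res_{x^*}(f)\,r^{-\beta}.$$
Hence at least $k(N)$ orbit points have $f$-value no smaller than $(1-\epsilon(r))\,\Res_{x^*}(f)\,r^{-\beta}$, so the $k(N)$-th largest value must also satisfy $f(x_{k(N)})\geq (1-\epsilon(r))\,\Res_{x^*}(f)\,r^{-\beta}$. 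Applying \eqref{Estf} at $x_{k(N)}$ yields the matching upper bound $f(x_{k(N)})\leq (1+\epsilon(r'))\,\Res_{x^*}(f)\,(r')^{-\beta}$. Combining these two estimates produces
$$r'\leq r\left(\frac{1+\epsilon(r')}{1-\epsilon(r)}\right)^{1/\beta},$$
and since $\epsilon(r),\epsilon(r')\to 0$ almost surely, this together with $r\leq r'$ forces $r'/r\to 1$, i.e.\ $d(x^*,\hat{x}_{k(N)})\sim d(x^*,x_{k(N)})$.

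No serious obstacle is expected here: the power singularity already makes the ordering by $f$-value and the ordering by distance to $x^*$ agree to leading order, and the whole argument reduces to a short two-sided squeeze via \eqref{Estf}. The only minor points to watch are that the lower bound on $f(\hat{x}_i)$ is obtained by invoking \eqref{Estf} at the single radius $r$ (using $d(x^*,\hat{x}_i)^{-\beta}\geq r^{-\beta}$, so no separate monotonicity of $\epsilon$ is needed), and that \eqref{Estf} is only used once the radii are small enough, which is secured by Lemmas~\ref{hatxklem} and~\ref{xklem}.
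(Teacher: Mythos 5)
Your proof is correct, and it uses the same core ingredient as the paper's argument, namely the two-sided estimate \eqref{Estf}. What you do differently, and more carefully, is the lower bound on $f(x_{k(N)})$. The paper asserts the inequality $f(\hat{x}_{k(N)}) \leq f(x_{k(N)})$ directly and then sandwiches; but since $x_{k(N)}$ is by convention the point \emph{farthest from $x^*$} among the $k(N)$ trimmed points (ordered by distance, not by $f$-value), this inequality need not hold — $\hat{x}_{k(N)}$ may itself be one of the trimmed points $x_j$ with $j<k(N)$ and $f(x_j)>f(x_{k(N)})$. Your counting argument sidesteps this: the $k(N)$ closest orbit points all have $f$-value at least $(1-\epsilon(r))\Res_{x^*}(f)r^{-\beta}$ by \eqref{Estf}, so the $k(N)$-th largest $f$-value on the orbit is at least this threshold, and since $x_{k(N)}$ is a trimmed point its $f$-value is at least the $k(N)$-th largest, hence also at least the threshold. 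This is the correct justification and repairs a small gap in the paper's chain of inequalities. One phrasing nit: you write ``the $k(N)$-th largest value must also satisfy $f(x_{k(N)})\geq\cdots$,'' which reads as if $f(x_{k(N)})$ \emph{is} the $k(N)$-th largest value; it need not be, but it is $\geq$ it, which is all you use — worth stating cleanly. Otherwise the argument, including the easy direction $r\leq r'$ via the pigeonhole on the closed ball $\overline{B_{r'}(x^*)}$ and the final squeeze using $\epsilon(r),\epsilon(r')\to0$ (secured by Lemmas~\ref{hatxklem} and~\ref{xklem}), is complete.
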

\begin{proof}
By definition $d\bigl(x^*,\hat{x}_{k(N)}\bigr) \leq d\bigl(x^*,x_{k(N)}\bigr)$, so it suffices to show
\begin{equation}\label{xkhatxklimsup}
\limsup_{N\rightarrow\infty} \frac{d\bigl(x^*,x_{k(N)}\bigr)}{d\bigl(x^*,\hat{x}_{k(N)}\bigr)} \leq 1.
\end{equation}
If $d\bigl(x^*,\hat{x}_{k(N)}\bigr)\ge d\bigl(x^*,x_{k(N)}\bigr),$
there is nothing to prove. Otherwise, $x_{k(N)}\notin\{\hat{x}_1,\ldots,\hat{x}_{k(N)}\}$, and hence
there exists some $1\leq i\leq k(N)$ such that $\hat{x}_i\notin\{x_1,\ldots,x_{k(N)}\}.$ By the definition of $x_1,\ldots,x_{k(N)}$, we have $f(\hat{x}_i)\leq f(x_{k(N)}).$ Moreover, $d(x^*,\hat{x}_i)\leq d\bigl(x^*,\hat{x}_{k(N)}\bigr).$
Thus, for all sufficiently large $N$,
\begin{align*}
\frac{\Res_{x^*}(f)\left(1-\epsilon\Bigl(d\bigl(x^*,\hat{x}_{k(N)}\bigr)\Bigr)\right)}{ d\bigl(x^*,\hat{x}_{k(N)}\bigr)^\beta}&\leq
f(\hat{x}_i) \leq f(x_{k(N)})\\
&\leq \frac{\Res_{x^*}(f)\left(1+\epsilon\Bigl(d\bigl(x^*,x_{k(N)}\bigr)\Bigr)\right)}{ d\bigl(x^*,x_{k(N)}\bigr)^\beta }.
\end{align*}
Since $d(x^*,\hat{x}_{k(N)})\to0$ and $d(x^*,x_{k(N)})\to0$ as $N\to\infty$, we have
\begin{equation*}
\epsilon\Bigl(d\bigl(x^*,\hat{x}_{k(N)}\bigr)\Bigr)\to0
\quad\text{and}\quad
\epsilon\Bigl(d\bigl(x^*,x_{k(N)}\bigr)\Bigr)\to0,
\end{equation*}
and hence \eqref{xkhatxklimsup} follows.
\end{proof}

\begin{lemma}\label{strongsklem}
Let $k(N)=o(N)$ and suppose $f$ has a power singularity at $x^*$. Then it holds that
\begin{equation*}
\lim_{N\rightarrow \infty} \frac{\hat{S}_N^{k(N)}(f)(x)}{S_N^{k(N)}(f)(x)} = 1 \quad \text{for almost every } x.
\end{equation*}
\end{lemma}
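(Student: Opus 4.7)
The plan is to combine the deterministic bound \eqref{hatSineq} with the three preceding lemmas. By construction and non-negativity of $f$, we have the lower bound $S_N^{k(N)}(f)(x) \leq \hat{S}_N^{k(N)}(f)(x)$, and since $k(N)=o(N)$ we have $k(N)\leq N-1$ eventually so $S_N^{k(N)}(f)(x)$ contains at least one non-negative term; ergodicity (or simply Lemma~\ref{hatxklem} with $R=0$) implies $S_N^{k(N)}(f)(x)\to\infty$ almost surely, so the ratio is well-defined for large $N$. Thus it suffices to prove the matching upper bound
\begin{equation*}
\limsup_{N\to\infty} \frac{\hat{S}_N^{k(N)}(f)(x)}{S_N^{k(N)}(f)(x)} \leq 1.
\end{equation*}

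For the upper bound, I apply \eqref{hatSineq} with $k = k(N)$ to get
\begin{equation*}
\frac{\hat{S}_N^{k(N)}(f)(x)}{S_N^{k(N)}(f)(x)} \leq \frac{1+\epsilon\bigl(d(x^*,x_{k(N)})\bigr)}{1-\epsilon\bigl(d(x^*,\hat{x}_{k(N)})\bigr)}.
\end{equation*}
By Lemma~\ref{hatxklem} and Lemma~\ref{xklem}, for almost every $x$ both $d(x^*,\hat{x}_{k(N)})\to 0$ and $d(x^*,x_{k(N)})\to 0$ as $N\to\infty$. Since $\epsilon(r)\to 0$ as $r\to 0$ (by the assumption made just after \eqref{Estf}), the right-hand side tends to $1$, which proves the desired upper bound.

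I do not expect any real obstacle here: the work has been done in Lemmas~\ref{hatxklem}--\ref{xkhatxklem} and in deriving \eqref{hatSineq}. Note that Lemma~\ref{xkhatxklem} is not strictly needed for this statement — only the vanishing of $\epsilon$ at both arguments is required — but its proof uses the same ingredients and reinforces that the asymptotic equivalence is purely a consequence of the continuity of $\epsilon$ at $0$ together with the two recurrence-type statements that both $x_{k(N)}$ and $\hat{x}_{k(N)}$ approach the singularity.
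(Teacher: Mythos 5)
Your proposal is correct and follows essentially the same route as the paper: both reduce to the inequality \eqref{hatSineq} and then invoke Lemmas~\ref{hatxklem} and~\ref{xklem} together with $\epsilon(r)\to 0$ to send the multiplicative factor to $1$. Your observation that Lemma~\ref{xkhatxklem} is not needed here is also accurate.
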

\begin{proof}
Since $\hat{S}_N^{k(N)}(f) \geq S_N^{k(N)}(f)$ by definition, it suffices to show that
\begin{equation*}
\limsup_{N\rightarrow \infty} \frac{\hat{S}_N^{k(N)}(f)(x)}{S_N^{k(N)}(f)(x)} \leq 1 \quad \text{for almost every } x.
\end{equation*}
Given that $d\bigl(x^*,\hat{x}_{k(N)}\bigr),\, d\bigl(x^*,x_{k(N)}\bigr) \rightarrow 0$ as $N\to\infty,$ the desired inequality follows directly from \eqref{hatSineq}.
\end{proof}

\section{Moments of Truncated Observables}\label{Tr}
In this section, we collect the estimates on truncated observables that will be used in the proofs of the main theorems. Throughout, $f$ is assumed to possess a power singularity of order $\beta$ at a slowly recurrent point $x^*$. The basic truncations are obtained by removing a small ball around the singularity: for $\epsilon>0$, we call
$$
f\cdot\left(1-\one_{B_\epsilon(x^*)}\right)
$$
a \emph{truncation} of $f$. The guiding idea is that, once the largest observations are removed, the trimmed ergodic sum can be compared with an ordinary ergodic sum of a suitable truncation of $f$. We therefore require two quantitative ingredients: control of short returns to small balls around $x^*$, and moment estimates for ergodic sums of functions truncated near $x^*$.

We begin with a standard consequence of exponential mixing, which will be used to control correlations of small balls.

\begin{lemma}[Mixing Estimate for Balls]\label{MixEmB}
If the system $(M, \mathcal{B}(M), \mu, T)$ is exponentially mixing as defined in \eqref{ballretasm:mem}, then for any balls $B_1, B_2 \subset M$ and any $k \in \mathbb{N}$, we have
$$
\left|\mu(B_1 \cap T^{-k} B_2)-\mu(B_1) \mu(B_2)\right| \ll \max\Bigl(\mu(B_1), \mu(B_2) \Bigr)^{\frac{d-1}{d} \cdot \frac{2\kappa}{2\kappa+1}} e^{-\frac{\gamma k}{2\kappa+1}}.$$
\end{lemma}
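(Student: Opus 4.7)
The plan is to approximate the indicators $\one_{B_1}$ and $\one_{B_2}$ by $C^{\kappa}$ bump functions and apply the exponential mixing assumption \eqref{ballretasm:mem} (in the case $n=2$) to the smoothed functions, then optimize the smoothing scale against the mixing error. Since $B_1$ and $B_2$ need not be small, one can first assume the radii $r_1, r_2$ satisfy $r_i\leq r_0$ for a small enough $r_0$ (the complementary case gives a trivial bound since $\mu(B_1\cap T^{-k}B_2)$ and $\mu(B_1)\mu(B_2)$ are both $O(1)$).

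First, for a parameter $\delta>0$ to be chosen, construct for $i=1,2$ a function $\phi_i\in C^{\kappa}(M)$ with $0\leq \phi_i\leq 1$, $\phi_i\equiv 1$ on $B_i$, $\supp\phi_i$ contained in the $\delta$-enlargement $B_i^{+\delta}$, and
\begin{equation*}
\|\phi_i\|_{C^{\kappa}} \ll \delta^{-\kappa}.
\end{equation*}
Such bumps are standard: they can be obtained by composing a smoothed version of the distance to the center with a fixed $C^{\kappa}$ cutoff, then rescaling. Applying the mixing inequality \eqref{ballretasm:mem} to $\phi_1,\phi_2$ gives
\begin{equation*}
\bigl|\E(\phi_1\cdot \phi_2\circ T^k) - \E\phi_1\,\E\phi_2\bigr|
\ll e^{-\gamma k}\,\delta^{-2\kappa}.
\end{equation*}

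Next, compare this smoothed correlation with the true quantity $\mu(B_1\cap T^{-k}B_2)-\mu(B_1)\mu(B_2)$. Using $|\phi_i-\one_{B_i}|\leq \one_{B_i^{+\delta}\setminus B_i}$, a standard splitting yields
\begin{equation*}
\bigl|\E(\phi_1\phi_2\circ T^k) - \mu(B_1\cap T^{-k}B_2)\bigr|
\;+\;
\bigl|\E\phi_1\,\E\phi_2 - \mu(B_1)\mu(B_2)\bigr|
\ll \mu(B_1^{+\delta}\setminus B_1) + \mu(B_2^{+\delta}\setminus B_2),
\end{equation*}
and since the density $\rho$ is continuous (hence bounded), the annular measure satisfies $\mu(B_i^{+\delta}\setminus B_i)\ll (r_i+\delta)^{d-1}\delta\ll r_i^{d-1}\delta$ for $\delta\lesssim r_i$. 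Setting $R=\max(r_1,r_2)$, the total error is thus controlled by $e^{-\gamma k}\delta^{-2\kappa} + R^{d-1}\delta$.

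Finally, I optimize over $\delta$. Setting the two summands equal leads to the choice
\begin{equation*}
\delta \;=\; e^{-\gamma k/(2\kappa+1)}\,R^{-(d-1)/(2\kappa+1)},
\end{equation*}
which yields a combined error of order $e^{-\gamma k/(2\kappa+1)}\,R^{(d-1)\cdot 2\kappa/(2\kappa+1)}$. Using the density estimate $\mu(B_R)\asymp R^d$, this rewrites precisely as $\max(\mu(B_1),\mu(B_2))^{(d-1)/d\cdot 2\kappa/(2\kappa+1)} e^{-\gamma k/(2\kappa+1)}$, as claimed. If the optimal $\delta$ exceeds the injectivity radius or $R$, one either reverts to the trivial bound or uses that $\delta\gtrsim 1$ already gives $\delta^{-\kappa}\ll 1$; these boundary regimes contribute only an implicit constant to the final estimate. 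The main technical point to be careful about is the construction of $\phi_i$ with the claimed $C^{\kappa}$ bound on a general Riemannian manifold and the validity of $\mu(B_i^{+\delta}\setminus B_i)\ll r_i^{d-1}\delta$ uniformly in $r_i$ and $\delta$; both follow from elementary Riemannian geometry together with continuity of $\rho$.
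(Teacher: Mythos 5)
Your argument is correct and is precisely the standard smoothing-plus-optimization proof that the paper omits as ``a standard consequence of exponential mixing'': approximate the indicators by $C^{\kappa}$ bumps at scale $\delta$, pay $e^{-\gamma k}\delta^{-2\kappa}$ from mixing and $R^{d-1}\delta$ from the annular collars, optimize in $\delta$, and handle the regime where the optimal $\delta$ exceeds $R$ by the trivial bound (which, as you indicate, works exactly there since $\max(\mu(B_1),\mu(B_2))^{1-\frac{d-1}{d}\cdot\frac{2\kappa}{2\kappa+1}}\ll R^{\frac{d+2\kappa}{2\kappa+1}}\leq e^{-\frac{\gamma k}{2\kappa+1}}$ in that range). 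The only point worth flagging is the final identification $\mu(B_R)\asymp R^d$: the paper assumes only that $\rho$ is continuous, so the lower bound can fail where $\rho$ vanishes and then $R^{(d-1)\frac{2\kappa}{2\kappa+1}}$ need not be dominated by $\max(\mu(B_1),\mu(B_2))^{\frac{d-1}{d}\cdot\frac{2\kappa}{2\kappa+1}}$; this is an imprecision in the lemma's ``any balls'' phrasing rather than in your argument, since every application in the paper uses balls centered at the point $x^*$ with $\rho(x^*)>0$, where the two-sided estimate holds.
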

Lemma~\ref{MixEmB} is a standard consequence of exponential mixing and its proof is therefore omitted.

For the later arguments it is convenient to formulate the truncation estimates slightly more generally. Instead of only removing balls $B_\epsilon(x^*)$, we will allow the removed sets to be small perturbations of balls. This flexibility is useful because the relevant truncation levels will depend on $N$, and in some places the natural sets are not exactly balls but are asymptotically equivalent to balls around $x^*$.

We shall say that a sequence of sets $A_N\subset M$ is \emph{regularly adapted} if there exists a sequence $r_N\to0$ and constants $a,a'\ge0$ such that the following two conditions hold.

\begin{itemize}
\item[(R1)] There exists a sequence $\delta_N=o(r_N)$ such that
\begin{equation}\label{regasm1}
B_{r_N}(x^*) \subset A_N \subset B_{r_N+\delta_N}(x^*)\quad \forall N\ge1.
\end{equation}
\item[(R2)] For every sequence $\epsilon_N=o(r_N)$ there are approximating functions $h_N\in C^\kappa(M)$ such that $\one_{A_N}\le h_N\le 1$ and
\begin{equation}\label{regasm2}
\|\one_{A_N}-h_N\|_{L^1} \ll \epsilon_N r_N^a, \quad \text{and} \quad \|h_N\|_{C^{\kappa}} \ll \epsilon_N^{-\kappa} r_N^{-a'}
\end{equation}
\end{itemize}

\begin{remark}
The parameter $r_N$ should be viewed as the effective radius of $A_N$. This scale is well defined asymptotically: if two sequences $r_N$ and $r_N'$ both satisfy \eqref{regasm1} for the same sets $A_N$, then $r_N\sim r_N'$. Hence all asymptotic estimates below are independent of the particular admissible choice of $r_N$.
\end{remark}

The next two elementary observations record the consequences of regular adaptation that will be used throughout the rest of the section.

\begin{lemma}\label{regadaptmeasure}
Let $A_N\subset M$ be a sequence of sets that is regularly adapted. Then
$$
\mu(A_N)\sim B_d\rho(x^*)r_N^d.
$$
\end{lemma}

\begin{proof}
By \eqref{regasm1}, $B_{r_N}(x^*)\subset A_N\subset B_{r_N+\delta_N}(x^*),$ and $\delta_N=o(r_N).$
Since $\mu(B_r(x^*))\sim B_d\rho(x^*)r^d$ and $r_N+\delta_N\sim r_N$, the conclusion follows by squeezing.
\end{proof}

\begin{lemma}\label{slowrecanlem}
Let $A_N\subset M$ be a sequence of sets that is regularly adapted. Then, for all constants $A,\,C>0$, there exists $N_0>1$ such that for all $N>N_0$ and all positive integers $1\le n\le C|\log r_N|$, one has
\begin{equation*}
\mu \left(A_N \cap T^{-n}(A_N) \right) \leq r_N^d |\log r_N|^{-A}.
\end{equation*}
\end{lemma}

\begin{proof}
Let $\widetilde r_N = r_N + \delta_N$. By \eqref{regasm1}, $A_N \subset B_{\widetilde r_N}(x^*)$ and $\widetilde r_N \sim r_N$. Hence $|\log \widetilde r_N| \sim |\log r_N|$, and for large $N$, $1\le n \le C|\log r_N|$ implies $1\le n \le 2C|\log \widetilde r_N|$. Applying the slow recurrence with exponent $A+2$ and time constant $2C$ yields
$$
\mu\bigl(B_{\widetilde r_N}(x^*) \cap T^{-n}B_{\widetilde r_N}(x^*)\bigr)
\le \mu\bigl(B_{\widetilde r_N}(x^*)\bigr)\,|\log \widetilde r_N|^{-A-2}.
$$
Since $\widetilde r_N \sim r_N$ and $\mu(B_{\widetilde r_N}(x^*)) \sim B_d\,\rho(x^*)\,\widetilde r_N^{\,d}$, the right-hand side is bounded by $r_N^d|\log r_N|^{-A}$ for all sufficiently large $N$. The result follows from the inclusion $A_N \cap T^{-n}(A_N) \subset B_{\widetilde r_N}(x^*) \cap T^{-n}B_{\widetilde r_N}(x^*)$.
\end{proof}

For a sequence of sets $A_N$ that is regularly adapted, we refer to $f\cdot(1-\one_{A_N})$ as a \emph{generalized truncation} of $f$. We next prove a short-time covariance estimate for such truncations.

\begin{lemma}[Covariance for the generalized truncation]\label{truncvarlem}
Let $\beta>\frac{d}{2}$, let $A_N \subset M$ be regularly adapted, and set $f_N=f \cdot \left(1-\one_{A_N}\right).$ Then for any constants $A,\,C>0$, there exists $N_0>1$ such that for all $N>N_0$ and positive integers $1\leq n\leq C |\log r_N|$,
\begin{equation}\label{truncvarlemconc}
\E\left(f_N \cdot f_N\circ T^n\right) \leq r_N^{- 2\beta + d} |\log r_N|^{-A}.
\end{equation}
\end{lemma}

\begin{proof}
We first record a local moment bound. Since $f(x) \le C_0\,d(x,x^*)^{- \beta}$ and $\rho$ is continuous, for any $s>0$ with $s\beta>d$ there exists a constant $K_s$ independent of $r$, such that for all sufficiently small $r>0$,
\begin{equation}\label{lcmb}
\int_{M \setminus B_r(x^*)} f^s \d\mu \le K_s \, r^{d - s\beta}.
\end{equation}
In addition, for small enough $r>0$, it holds that
\begin{equation*}
\mu (B_r(x^*)) \leq 2 \rho(x^*) B_d r^d.
\end{equation*}
Choose $q>1$ such that $q^2\beta>d$ and $\beta-d+\frac{d}{q^2}>0$. Denote by $p$ the conjugate exponent of $q$, and choose $H>0$ so large that $H\left(\beta-d+\frac{d}{q^2}\right)\ge A+2,$ and $H(2\beta-d)\ge A+2$.

Set $\hat{r}_N=r_N|\log r_N|^H$ and define $\hat{f}_N=f_N \cdot \left(1-\one_{B_{\hat{r}_N}(x^*)}\right).$
For $1\leq n <C |\log r_N|,$ we decompose the term $\E\left(f_N \cdot f_N\circ T^n\right)$ as
$$
\E\left(f_N \cdot f_N\circ T^n\right)=\mathfrak{A} + \mathfrak{B} + \mathfrak{C} +\mathfrak{D},
$$
where
$$
\begin{aligned}
\mathfrak{A} &:= \int_{T^{-n}(B_{\hat{r}_N}(x^*))} \hat{f}_N \cdot f_N \circ T^n \d\mu, \\
\mathfrak{B} &:= \int_{B_{\hat{r}_N}(x^*)} f_N \cdot \hat{f}_N \circ T^n \d\mu, \\
\mathfrak{C} &:= \E \left(\hat{f}_N \cdot \hat{f}_N \circ T^n\right), \\
\mathfrak{D} &:= \int_{B_{\hat{r}_N}(x^*) \cap T^{-n}(B_{\hat{r}_N}(x^*))} f_N \cdot f_N \circ T^n\d\mu.
\end{aligned}
$$

Bounding $\mathfrak{A}, \mathfrak{B}$:

Applying H\"{o}lder's inequality twice with exponents $p$ and $q$ yields
\begin{align*}
\mathfrak{A}&= \int_{T^{-n}(B_{\hat{r}_N}(x^*))} \hat{f}_N \cdot f_N \circ T^n \d\mu\leq\mu(B_{\hat{r}_N}(x^*))^{\frac{1}{p}} \left(\E\left(\hat{f}_N \cdot f_N \circ T^n \right) \right)^{\frac{1}{q}}\\
&\leq\left(2B_d\rho(x^*)\hat{r}_N^d\right)^{\frac{1}{p}} \left(\E (\hat{f}_N^{pq}) \right)^{\frac{1}{pq}} \left(\E (f_N^{q^2}) \right)^{\frac{1}{q^2}}.
\end{align*}
Note that always $pq \geq 4$, hence $p q \beta > d$. Since $\hat{f}_N$ vanishes on $B_{\hat r_N}(x^*)$ and $f_N$ vanishes on $A_N\supset B_{r_N}(x^*)$, we can apply \eqref{lcmb} to obtain
$$\E(\hat{f}_N^{pq})\le K_{pq} \hat{r}_N^{d-pq\beta},\quad\E(f_N^{q^2})\le K_{q^2}r_N^{d-q^2\beta}.
$$
For sufficiently large $N$ it follows that
\begin{align*}
\mathfrak{A}&\leq\left(2B_d\rho(x^*)\right)^{\frac{1}{p}}K_{pq}^{\frac{1}{pq}}
K_{q^2}^{\frac{1}{q^2}}\hat{r}_N^{-\beta+d-\frac{d}{q^2}} r_N^{-\beta+\frac{d}{q^2}}\\
&\leq\left(2B_d\rho(x^*)\right)^{\frac{1}{p}}K_{pq}^{\frac{1}{pq}}
K_{q^2}^{\frac{1}{q^2}}r_N^{-2\beta+d}|\log r_N|^{-A-2}.
\end{align*}
The same estimate applies to $\mathfrak{B}.$

Bounding $\mathfrak{C}$: We apply the Cauchy--Schwarz inequality and obtain
\begin{equation*}
\mathfrak{C} \le \E (\hat{f}_N^2)\le K_2\hat{r}_N^{-2\beta+d}\le K_2 r_N^{-2\beta+d}|\log r_N|^{-A-2},
\end{equation*}
for $N$ sufficiently large.

Bounding $\mathfrak{D}$: Let $A'>0$ be fixed, to be chosen later. By the slow recurrence property (see Definition \ref{SlowRec}), for $N$ sufficiently large, for $1\le n<2C|\log\hat{r}_N|$ it holds that
$$
\mu\left(B_{\hat{r}_N}(x^*) \cap T^{-n}\Bigl(B_{\hat{r}_N}(x^*)\Bigr)\right) \leq \mu(B_{\hat{r}_N}(x^*)) | \log \hat{r}_N |^{-A'} \leq 2B_d\rho(x^*)\hat{r}_N^d | \log \hat{r}_N|^{-A'}.
$$

Applying H\"{o}lder's inequality and \eqref{lcmb} we obtain
\begin{align*}
\mathfrak{D} &\leq\mu\left(B_{\hat{r}_N}(x^*)\cap T^{-n}\Bigl(B_{\hat{r}_N}(x^*)\Bigr)\right)^{\frac{1}{p}} \left(\E \left(f_N^q \cdot f_N^{q} \circ T^n \right)\right)^{\frac{1}{q}}\\
&\le\left(2B_d\rho(x^*)\hat{r}_N^d | \log \hat{r}_N|^{-A'}\right)^{\frac1p}\left(\E\Bigl(f_N^{pq}\Bigr) \right)^{\frac{1}{pq}} \left(\E \Bigl(f_N^{q^2}\Bigr) \right)^{\frac{1}{q^2}}\\
&\le\left(2B_d\rho(x^*)\right)^{\frac{1}{p}}K_{pq}^{\frac{1}{pq}}
K_{q^2}^{\frac{1}{q^2}}r_N^{-2\beta+d}|\log r_N|^{\frac{Hd}{p}}|\log\hat{r}_N|^{-\frac{A'}{p}}.
\end{align*}

Note that $|\log \hat{r}_N|\ge \frac{1}{2} |\log r_N|$ for sufficiently large $N$, and choosing $A'>Hd+p(A+2)$ gives
$$\mathfrak{D}\leq 2^{\frac{A'}{p}}\left(2B_d\rho(x^*)\right)^{\frac{1}{p}}K_{pq}^{\frac{1}{pq}}
K_{q^2}^{\frac{1}{q^2}}r_N^{-2\beta+d}|\log r_N|^{-A-2}.$$

Combining the bounds for $\mathfrak{A}$, $\mathfrak{B}$, $\mathfrak{C}$, and $\mathfrak{D}$, we obtain
$$\E\left(f_N \cdot f_N\circ T^n\right)\le\tilde{K}r_N^{-2\beta+d}|\log r_N|^{-A-2},
$$
where
$$\tilde{K}=\bigl(2+2^{\frac{A'}{p}}\bigr)\left(2B_d\rho(x^*)\right)^{\frac{1}{p}}K_{pq}^{\frac{1}{pq}}
K_{q^2}^{\frac{1}{q^2}}+K_2.
$$
Choose $N_0$ sufficiently large so that $\tilde{K}|\log r_N|^{-2}\le 1$ for all $N>N_0$. Then
$$\E\left(f_N \cdot f_N\circ T^n\right)\le r_N^{-2\beta+d}|\log r_N|^{-A},
$$
which completes the proof.
\end{proof}

\begin{proposition}\label{momentprop}
Fix $m_1,m_2\in \mathbb N$, and set $m=m_1+m_2.$ Assume that $m\geq2$ and that the system is exponentially mixing of order $m$.

Suppose that $\Ord_{x^*}(f)=\beta>\frac{d}{2}$. Let $A_N\subset M$ be a sequence of sets that is regularly adapted and satisfies $N r_N^{d} \rightarrow \infty$. Define $f_N=f \cdot \left(1-\one_{A_N}\right).$ Then
\begin{equation}\label{momentclaim}
\lim_{N\rightarrow\infty} \frac{\int_M \left(S_N(\one_{A_N}) - N \mu(A_N) \right)^{m_1} \left(S_N(f_N) - N \E( f_N)\right)^{m_2}\d\mu}{\Res_{x^*}(f)^{m_2}\bigl(B_d \rho(x^*)\bigr)^{\frac{m}{2}} N^{\frac{m}{2}} r_N^{\frac{md}{2}-m_2\beta }}=\omega(m_1,m_2),
\end{equation}
where $\alpha=\frac{\beta}{d}$,
\begin{equation*}
\sigma^2= \frac{1}{2\alpha-1},
\end{equation*}
and \begin{equation*}
\omega(m_1,m_2)= \begin{cases}
(m_1 - 1)!! (m_2-1)!! \sigma^{m_2} & \text{if both } m_1 \text{ and } m_2 \text{ are even},\\
0 & \text{otherwise}.
\end{cases}
\end{equation*}
\end{proposition}

The cases $m=0$ and $m=1$ are trivial. Indeed, if $m=0$, then both sides of \eqref{momentclaim} are equal to $1$. If $m=1$, then either $(m_1,m_2)=(1,0)$ or $(0,1)$, and the corresponding moment in \eqref{momentclaim} vanishes by centering. Thus, as an immediate consequence of Proposition~\ref{momentprop} and the method of moments, we obtain the following joint distributional limit. This consequence will be used in Section~\ref{intersec}, while the explicit moment estimate of Proposition~\ref{momentprop} will also be needed later.

\begin{corollary}\label{CLTgenrem}
Assume that the system is exponentially mixing of all orders and that the remaining assumptions of Proposition~\ref{momentprop} hold. Then
$$\left(\frac{S_N(\one(A_N)) - N \mu(A_N)}{\sqrt{B_d\rho(x^*)N r_N^d}},\,\frac{S_N(f_N) - N \E(f_N)}
{\Res_{x^*}(f)\sqrt{B_d\rho(x^*)N r_N^d}\, r_N^{-\beta}}\right) \Rightarrow (\mathcal{N}(0,1),\mathcal{N}(0,\sigma^2))
$$
as $N\to\infty,$ where the right-hand side denotes a vector of independent normal random variables, and
$\sigma^2=\frac{1}{2\alpha-1}.$
\end{corollary}

\begin{proof}[Proof of Proposition \ref{momentprop}]
Denote $g_1=\one_{A_N}$ and $g_2=f_N$. By replacing $f$ with $f/\Res_{x^*}(f)$, we may assume throughout the proof that $\Res_{x^*}(f)=1$. The general case is recovered by rescaling at the end.

Step 1: Smooth approximation.

Let $\epsilon_N>0$ be chosen later. Using \eqref{regasm2}, there is a $C^{\kappa}$ smooth function $h_1$ such that $\one_{A_N}\le h_1\le 1$, with
\begin{equation*}
\|g_1- h_1\|_{L^1} \ll \epsilon_N r_N^a \quad \text{and} \quad \|h_1\|_{C^{\kappa}} \ll \epsilon_N^{-\kappa} r_N^{-a'}.
\end{equation*}
Define $h_2= f \cdot (1-h_1)$. Then
\begin{equation*}
\|g_2 - h_2 \|_{L^1} \ll \epsilon_N r_N^{a-\beta} \quad \text{and} \quad \|h_2\|_{C^{\kappa}} \ll \epsilon_N^{-\kappa} r_N^{-a'-\beta-\kappa}.
\end{equation*}

Define $\bar{g}_j=g_j-\int_M g_j \d\mu$ and $\bar{h}_j=h_j - \int_M h_j \d\mu$ for $j=1, 2.$ We next justify that replacing $g_j$ with $h_j$ in the integral of $\prod_{j=1}^2 \Bigl(S_N(\bar{g}_j)\Bigr)^{m_j}$ introduces a negligible error.

Since centering preserves the $L^1$-order, we have
\begin{equation}\label{cL1e}
\|\bar g_1-\bar h_1\|_{L^1}\ll \epsilon_N r_N^a
\quad \text{and} \quad
\|\bar g_2-\bar h_2\|_{L^1}\ll \epsilon_N r_N^{a-\beta}.
\end{equation}

We now estimate the error caused by replacing $g_j$ with $h_j$. Using
$$|x^m-y^m|\le m|x-y|\max(|x|,|y|)^{m-1}, $$
we obtain
\begin{equation*}
\begin{aligned}
&\left\|\prod_{j=1}^2 \Bigl(S_N(\bar g_j)\Bigr)^{m_j}-\prod_{j=1}^2 \Bigl(S_N(\bar h_j)\Bigr)^{m_j}\right\|_{L^1}\\
&\le m_1\left\|S_N(\bar g_1)-S_N(\bar h_1)\right\|_{L^1}\max\left(\left\|S_N(\bar g_1)\right\|_{L^\infty},
\left\|S_N(\bar h_1)\right\|_{L^\infty}\right)^{m_1-1}\left\|S_N(\bar g_2)\right\|_{L^\infty}^{m_2} \\
&\quad+m_2\left\|S_N(\bar g_2)-S_N(\bar h_2)\right\|_{L^1}\left\|S_N(\bar h_1)\right\|_{L^\infty}^{m_1}
\max\left(\left\|S_N(\bar g_2)\right\|_{L^\infty},\left\|S_N(\bar h_2)\right\|_{L^\infty}\right)^{m_2-1}.
\end{aligned}
\end{equation*}
Using \eqref{cL1e},
$$\left\|S_N(\bar g_1)-S_N(\bar h_1)\right\|_{L^1}\ll N\epsilon_N r_N^a,\quad
\left\|S_N(\bar g_2)-S_N(\bar h_2)\right\|_{L^1}\ll N\epsilon_N r_N^{a-\beta}.$$
Moreover,
$$\left\|S_N(\bar g_1)\right\|_{L^\infty},\left\|S_N(\bar h_1)\right\|_{L^\infty}\ll N,
\quad\text{and}\quad
\left\|S_N(\bar g_2)\right\|_{L^\infty},\left\|S_N(\bar h_2)\right\|_{L^\infty}\ll Nr_N^{-\beta}.$$
Therefore
$$\left\|\prod_{j=1}^2 \Bigl(S_N(\bar g_j)\Bigr)^{m_j}-\prod_{j=1}^2 \Bigl(S_N(\bar h_j)\Bigr)^{m_j}\right\|_{L^1}\ll \epsilon_N N^m r_N^{a-m_2\beta}.
$$
After division by the normalization in \eqref{momentclaim}, this is bounded by
$O\bigl(\epsilon_N N^{\frac{m}{2}}r_N^{a-\frac{md}{2}}\bigr).$
Since $Nr_N^d\to\infty$, choosing $\epsilon_N=N^{-B}$ with $B>0$ sufficiently large makes this term $o(1)$. Thus the error caused by replacing  $g_j$ by $h_j$ in \eqref{momentclaim} is negligible.

Therefore, the claim \eqref{momentclaim} will follow once we show
\begin{equation}\label{hbarerr}
\lim_{N\rightarrow\infty} \frac{\int_M \left(S_N(\bar{h}_1) \right)^{m_1} \left(S_N(\bar{h}_2)\right)^{m_2}\d\mu}{\left(B_d \rho(x^*)\right)^{\frac{m}{2}} N^{\frac{m}{2}} r_N^{\frac{md}{2}-m_2\beta }}=\omega(m_1,m_2).
\end{equation}

Step 2: Cluster decomposition.

We expand the product
\begin{align*}
\prod_{j=1}^2 (S_N(\bar{h}_j))^{m_j} &= \sum_{\textbf{n}\in \{0,\ldots,N-1\}^m}
\left(\prod_{i=1}^{m_1} (\bar{h}_1 \circ T^{n_i})\right) \left(\prod_{i=m_1+1}^m (\bar{h}_2 \circ T^{n_i})\right)\\
&=\sum_{\textbf{n}\in \{0,\ldots,N-1\}^m} \prod_{i=1}^{m} (\bar{h}_{j(i)} \circ T^{n_i}),
\end{align*}
where
\begin{equation*}
j(i)=\begin{cases}
1 & \text{if } 1\leq i \leq m_1,\\
2 & \text{if } m_1 + 1 \leq i \leq m.
\end{cases}
\end{equation*}

Let $ C > 0 $ denote a sufficiently large constant. For each $\textbf{n}= (n_1, \dots, n_m)\in \{0,\ldots,N-1\}^m$, define clusters as follows. There exists a minimal integer $1 \leq k=k(\textbf{n}) \leq m^2 + 1$ such that for all $i, i'\in \{1,\dots,m\},$
\begin{equation}\label{nigp}
|n_i - n_{i'}| \leq C^k \log N \quad \text{or} \quad |n_i - n_{i'}| > C^{k+1} \log N.
\end{equation}
For $l\geq 1$ we call $\{i_1,\ldots,i_l\}\subset \{1,\ldots,m\}$ a cluster of $\textbf{n}$ if it is a maximal collection with
\begin{equation*}
|n_i - n_{i'}| < C^k \log N \quad \forall i,i'\in \{i_1,\ldots,i_l\}.
\end{equation*}
In such cases, define $ l $ as the cluster length. Then there exists an integer $1\leq \eta = \eta(\textbf{n}) \leq m$ such that $ \textbf{n}$ consists of the clusters $\{i^s_1, \dots, i^s_{l_s}\}$ for $ s = 1, \dots, \eta $, with corresponding lengths $ l_s $.

Equivalently, let $ k $ be the smallest integer for which the set $\{n_i\}$ can be partitioned into $\eta$ clusters of sizes $l_s$, each being a maximal subset where pairwise differences are at most $C^k \log N$, and distinct clusters are separated by more than $C^{k+1} \log N$.

By exponential mixing, we have
\begin{equation}\label{cluexp}
\begin{aligned}
\left|\int \left(\prod_{i=1}^m (\bar{h}_{j(i)} \circ T^{n_i})\right) \d\mu \right.&\left.-\prod_{s=1}^{\eta}\int\left(\prod_{r=1}^{l_s} (\bar{h}_{j(i^s_r)} \circ T^{n_{i^s_r}})\right) \d\mu\right|\\
&\ll e^{-\gamma C^{k+1} \log N}\prod_{s=1}^\eta\left\|\prod_{r=1}^{l_s} (\bar{h}_{j(i^s_r)} \circ T^{n_{i^s_r}})\right\|_{C^{\kappa}}.
\end{aligned}
\end{equation}
Each $\bar{h}_j$ satisfies $\|\bar{h}_j\|_{C^{\kappa}} \ll N^{B'}$ for a large constant $B'>0$. Since the time differences within each cluster are at most $C^k \log N$, we obtain
\begin{align*}
\left\|\prod_{r=1}^{l_s} (\bar{h}_{j(i^s_r)} \circ T^{n_{i^s_r}})\right\|_{C^{\kappa}} &\ll\prod_{r=1}^{l_s} \| \bar{h}_{j(i^s_r)} \|_{C^{\kappa}} L^{C^k \log N}&\ll\left(N^{B'} L^{C^k \log N}\right)^{l_s},
\end{align*}
where $L=\sup_{\|f\|_{C^{\kappa}}=1} \|f\circ T\|_{C^{\kappa}}$ is the operator norm of $f\mapsto f\circ T$ on $C^{\kappa}$.

Consequently, the right-hand side of \eqref{cluexp} can be estimated as
\begin{equation}\label{proderr}
e^{-\gamma C^{k+1}\log N}N^{B'm}L^{mC^k\log N}\le N^{-100m},
\end{equation}
which holds for sufficiently large $C$ independent of $N.$ After summing over all $\mathbf n$, this error is negligible.

Considering \eqref{hbarerr} and \eqref{proderr}, the claim \eqref{momentclaim} will follow once we show
\begin{equation*}
\lim_{N\rightarrow \infty} \frac{\sum_{\textbf{n}\in \{0,\ldots,N-1\}^m} \prod_{s=1}^{\eta(\textbf{n})} \int_M \prod_{r=1}^{l_s} (\bar{h}_{j(i^s_r)} \circ T^{n_{i^s_r}}) \d\mu}{\left(B_d \rho(x^*)\right)^{\frac{m}{2}} N^{\frac{m}{2}} r_N^{\frac{md}{2}-m_2\beta }} = \omega(m_1,m_2).
\end{equation*}

Replacing once more the smooth approximations $\bar{h}_{j(i^s_r)}$ by $\bar{g}_{j(i^s_r)}$ on the left hand side, similarly to \eqref{hbarerr}, only introduces another negligible error term of order $o(1)$. Therefore, the proof will be finished once we show
\begin{equation}\label{momentsumclaim}
\lim_{N\rightarrow \infty} \frac{\sum_{\textbf{n}\in \{0,\ldots,N-1\}^m} \prod_{s=1}^{\eta(\textbf{n})} \int_M \prod_{r=1}^{l_s} (\bar{g}_{j(i^s_r)} \circ T^{n_{i^s_r}})\d\mu}{\left(B_d \rho(x^*)\right)^{\frac{m}{2}} N^{\frac{m}{2}} r_N^{\frac{md}{2}-m_2\beta }} = \omega(m_1,m_2).
\end{equation}

Step 3: Negligible cluster configurations.

Define $\Delta_\eta = \left\{\mathbf{n} : \mathbf{n} \text{ consists of } \eta \text{ clusters}\right\}.$
It holds that
\begin{equation}\label{clusternumber}
\#\Delta_{\eta} \ll \binom{m}{\eta}\eta !N^{\eta}\eta^{m-\eta} \left(2C^k\log N\right)^{m-\eta}\ll N^{\eta} (\log N)^{m}.
\end{equation}

Case 3.1: $\eta>\frac{m}{2}.$ At least one cluster is a singleton.
Since
$$\int\bar{g}_{j(i)} \circ T^{n_i} \d\mu = 0,$$
then it holds that
\begin{equation*}
\prod_{s=1}^{\eta} \int_M \prod_{r=1}^{l_s} (\bar{g}_{j(i^s_r)} \circ T^{n_{i^s_r}}) \d\mu=0.
\end{equation*}

Case 3.2: $\eta<\frac{m}{2}.$ If at least one cluster is a singleton, it can be treated as in Case 3.1.
If no cluster is a singleton, then all $l_s \geq 2$. Given that
\begin{align*}
&\|\bar{g}_1\|_{L^2} \ll r_N^{\frac{d}{2}},\quad \|\bar{g}_1\|_{L^{\infty}} \ll 1, \quad \text{ and}\\
&\|\bar{g}_2\|_{L^2} \ll r_N^{-\beta+\frac{d}{2}},\quad \|\bar{g}_2\|_{L^{\infty}} \ll r_N^{-\beta},
\end{align*}

we have
\begin{equation}\label{clusterint}
\left|\prod_{s=1}^{\eta}\int\prod_{r=1}^{l_s}(\bar{g}_{j(i^s_r)} \circ T^{n_{i^s_r}})\d\mu\right|\ll
r_N^{-m_2 \beta + \eta d}.
\end{equation}
Thus
\begin{equation*}
\begin{aligned}
\sum_{\eta<m/2} \sum_{\mathbf{n} \in \Delta_\eta}
\left|\prod_{s=1}^{\eta}\int\prod_{r=1}^{l_s}(\bar{g}_{j(i^s_r)} \circ T^{n_{i^s_r}})\d\mu\right|
&\ll\sum_{\eta<m/2}N^{\eta} (\log N)^m r_N^{-m_2 \beta + \eta d}\\
&\ll\sum_{\eta<m/2} \left(N r_N^d\right)^\eta r_N^{- m_2 \beta}(\log N)^m.
\end{aligned}
\end{equation*}
Subcase 3.2~(i): $N r_N^d>N^{1/100}$. This bound is of order $o\left(N^{\frac{m}{2}} r_N^{\frac{md}{2} - m_2 \beta}\right).$

Subcase 3.2~(ii): $N r_N^d\leq N^{1/100}$. For each index $\textbf{n}\in \Delta_{\eta}$ we distinguish two cases:

Subcase 3.2~(ii-a): all $n_i$ within the same cluster are equal for each cluster.
Then there exist $O(N^\eta)$ such indices \textbf{n}. Taking into account the estimate in \eqref{clusterint}, the total contribution from these indices is of order $O\left(N^{\eta} r_N^{\eta d - m_2 \beta}\right).$

Subcase 3.2~(ii-b): at least one cluster contains two distinct indices. Then Lemma \ref{slowrecanlem} provides an additional saving. Using the estimates established in Case~4.1~(B) below,
\eqref{clusterint} can be replaced by
\begin{align*}
\left| \prod_{s=1}^{\eta} \int \prod_{r=1}^{l_s} (\bar{g}_{j(i^s_r)} \circ T^{n_{i^s_r}} )\d\mu \right| &\ll
r_N^{\eta d - m_2 \beta} |\log r_N|^{-100m} \\
&\ll
r_N^{\eta d - m_2 \beta} (\log N)^{-100m}.
\end{align*}
Note that here the assumption $N r_N^d\leq N^{\frac{1}{100}}$ is used crucially to show $\log N \asymp |\log r_N|$.

Since the number of indices $\mathbf{n}$ in this subcase is at most $\#\Delta_{\eta}\ll N^{\eta} (\log N)^m$, the total contribution from this case is of order
\begin{equation*}
O\left(N^{\eta} r_N^{\eta d - m_2 \beta} (\log N)^{-99m}\right).
\end{equation*}

Combining subcases 3.2(ii-a) and 3.2(ii-b), it holds that
\begin{align*}
\sum_{\eta < \frac{m}{2}} \sum_{\textbf{n} \in \Delta_{\eta}} \left| \prod_{s=1}^{\eta} \int \prod_{r=1}^{l_s} (\bar{g}_{j(i^s_r)} \circ T^{n_{i^s_r}}) \d\mu\right| &\ll \sum_{\eta < \frac{m}{2}} N^{\eta} r_N^{\eta d - m_2 \beta}.
\end{align*}
Using the assumption $N r_N^d \rightarrow\infty$, it follows that $N^{\eta} r_N^{\eta d - m_2 \beta} =o\left( N^{\frac{m}{2}} r_N^{\frac{md}{2} - m_2 \beta}\right)$ for each $\eta<\frac{m}{2}$. Therefore the above bound is of order $o\left( N^{\frac{m}{2}} r_N^{\frac{md}{2} - m_2 \beta}\right)$.

In conclusion, all terms in \eqref{momentsumclaim} for $\eta \neq \frac{m}{2}$ are asymptotically negligible. Note that, in particular, if $m$ is odd, then \eqref{momentsumclaim} is satisfied. So for the rest of the proof, assume $m$ is even.

Step 4: The critical case $\eta=\frac{m}{2}$.

Unless all the clusters are of length $2$, $\textbf{n}$ must have a singleton. Define
$$
\Gamma=\left\{\mathbf{n}\in \Delta_{\frac{m}{2}} : \text{each cluster has size } 2\right\}.
$$
Then we consider
$\sum_{\mathbf{n}\in\Gamma}\prod_{s=1}^{\frac{m}{2}}\int \big(\bar{g}_{j(i^s_1)} \cdot \bar{g}_{j(i^s_2)} \circ T^{|n_{i^s_2}-n_{i^s_1}|}\big)\d\mu.$

Since all terms with singletons are negligible -- as in Case 3.1 -- it is enough to show
\begin{equation}\label{momentgammaclaim}
\lim_{N\rightarrow \infty} \frac{\sum_{\textbf{n}\in \Gamma} \prod_{s=1}^{\frac{m}{2}} \int_M \prod_{r=1}^{2} (\bar{g}_{j(i^s_r)} \circ T^{n_{i^s_r}}) \d\mu}{\left(B_d \rho(x^*)\right)^{\frac{m}{2}} N^{\frac{m}{2}} r_N^{\frac{md}{2}-m_2\beta }} = \omega(m_1,m_2).
\end{equation}

Step 4.1: Estimates for a single pair.

For $n_i,n_{i'}$ in the same cluster, we aim to estimate $\int_M (\bar{g}_{j(i)} \circ T^{n_i})(\bar{g}_{j(i')} \circ T^{n_{i'}})\d\mu$. Simplifying notation, we consider
\begin{equation*}
\int_M \bar{g}_j \cdot \bar{g}_{j'} \circ T^k\d\mu,
\end{equation*}
for $j,j' \in \{1,2\}$ and $k=|n_i - n_{i'}| \leq C^{m^2+1} \log N,$ see \eqref{nigp}. Below we will distinguish several cases.

We first highlight a technical point. Since $g_1 = \one_{A_N}$, the slow recurrence estimate, specifically Lemma~\ref{slowrecanlem}, is applicable only up to times $k=C' |\log r_N|$ for a constant $C'>0$, which might be small if $r_N$ decays substantially slower than $1/N$.

Case 4.1~(A): $k=0$. We consider the cases where $j=j'=1,$ $j=j'=2$, and $j\neq j'$ respectively. For $j=j'=1$ we use Lemma~\ref{regadaptmeasure} to obtain
\begin{equation}\label{mainest1}
\int \bar{g}_1^2\d\mu\sim \mu(A_N)\sim B_d \rho(x^*) r_N^d.
\end{equation}
For $j=j'=2$, a straightforward calculation in Euclidean space reveals
\begin{equation}\label{mainest2}
\begin{aligned}
\int_M \bar{g}_2^2 \d\mu &\sim\int_{r_N \leq d(x,x^*) < 1} f^2(x)\d\mu(x)\\
&\sim \rho(x^*)\int_{\textbf{y}\in \R^d, r_N < \|\textbf{y}\| < 1} \frac{1}{\|\textbf{y}\|^{2\beta}} \d \textbf{y}\\
&\sim B_d \rho(x^*) r_N^{-2\beta+d} \int_1^{\infty} u^{-2\alpha} \d u \\
&\sim \sigma^2 B_d \rho(x^*) r_N^{-2\beta+d},
\end{aligned}
\end{equation}
where $\sigma^2=\frac{1}{2\alpha-1}$.

A straightforward computation in local coordinates near $x^*$, together with the boundedness of $f$ away from $x^*$, gives
\begin{equation*}
\|g_2\|_{L^1}\ll \begin{cases}
1 & \text{if } \beta<d\\
|\log r_N| & \text{if } \beta=d\\
r_N^{d-\beta} & \text{if } \beta>d.
\end{cases}
\end{equation*}
For $j\neq j'$, since the supports of $g_1$ and $g_2$ are disjoint, we therefore have
\begin{align*}
\left|\int_M \bar g_1\bar g_2\d\mu\right|=\left|\int_M g_1\d\mu\right|\left|\int_M g_2\d\mu\right|
&\ll r_N^{d}\left(1+|\log r_N|+r_N^{d-\beta}\right)\\
&=r_N^{d-\beta} \left(r_N^{\beta}(1+|\log r_N|) + r_N^d\right).
\end{align*}

Case 4.1~(B): $0<k<C' \left|\log r_N \right|,$ where $C'>0$ is a sufficiently large constant. For $j=j'=1$, applying Lemma~\ref{slowrecanlem} to $A_N \cap T^{-k}(A_N)$ with $A=100m$ yields
\begin{equation*}
\left|\int_M \bar{g}_1 \cdot \bar{g}_1 \circ T^{k} \d\mu\right|\ll \mu\left(A_N \cap T^{-k}(A_N)\right)
\ll r_N^d \left|\log r_N \right|^{-100m}.
\end{equation*}

For $j=j'=2$, Lemma \ref{truncvarlem} with $A=100m$ yields
\begin{equation}
\left| \int_M \bar{g}_2 \cdot \bar{g}_2 \circ T^k \d\mu \right| \ll r_N^{-2\beta+d} |\log r_N|^{-100m}.
\end{equation}

For the cross term $j\neq j'$, we consider the case $j=1$ and $j'=2$;
the reverse situation follows analogously.

Let $\hat{r}_N=r_N|\log r_N|^H$, where $H>\frac{100m}{\beta}$ is fixed. Since $A_N$ is regularly adapted, for sufficiently large $N$ we have $A_N\subset B_{\hat{r}_N}(x^*)$. Hence,
\begin{equation*}
\int_M g_1\cdot g_2\circ T^k\d\mu
=
\int_{A_N\cap T^{-k}(M\setminus A_N)}f\circ T^k\d\mu
=
\mathfrak A+\mathfrak B,
\end{equation*}
where
\begin{align*}
\mathfrak A&:=
\int_{A_N\cap T^{-k}(B_{\hat{r}_N}(x^*)\setminus A_N)}
f\circ T^k\d\mu,\\
\mathfrak B&:=
\int_{A_N\cap T^{-k}(M\setminus B_{\hat{r}_N}(x^*))}
f\circ T^k\d\mu .
\end{align*}

For $\mathfrak A$, note that for sufficiently large $N$, $|\log r_N|\leq 2|\log\hat{r}_N|$. Since $k<C'|\log r_N|$, we therefore have $k<2C'|\log\hat r_N|.$
Using slow recurrence with exponent $A'=Hd+100m$ and the inclusion $B_{r_N}(x^*)\subset A_N\subset B_{\hat r_N}(x^*),$
we obtain
\begin{align*}
\mathfrak A
&\leq
\sup_{y\notin B_{r_N}(x^*)}|f(y)|
\mu(B_{\hat{r}_N}(x^*)\cap T^{-k}B_{\hat{r}_N}(x^*))\\
&\ll
r_N^{-\beta}\hat{r}_N^d|\log\hat{r}_N|^{-A'}\\
&\ll
r_N^{d-\beta}|\log r_N|^{-100m}.
\end{align*}

For $\mathfrak B$, using Lemma~\ref{regadaptmeasure},
$$\mathfrak B\leq\sup_{y\notin B_{\hat{r}_N}(x^*)}|f(y)|\,\mu(A_N)\ll
\hat{r}_N^{-\beta}r_N^d\ll r_N^{d-\beta}|\log r_N|^{-100m}.
$$

Combining the estimates for $\mathfrak A$ and $\mathfrak B$ gives
\begin{equation*}
\int_M g_1\cdot g_2\circ T^k\d\mu
\ll
r_N^{d-\beta}|\log r_N|^{-100m}.
\end{equation*}
Finally, centering does not affect the estimate, so for $j\neq j'$,
\begin{equation*}
\left|\int_M\bar g_j\cdot\bar g_{j'}\circ T^k\d\mu\right|
\ll
r_N^{d-\beta}|\log r_N|^{-100m}.
\end{equation*}

Case 4.1~(C): $k> C' \left|\log r_N\right|.$ We emphasize that this case only needs to be considered when $|\log r_N|=o(\log N)$. Indeed, all computations here are still inside a cluster, so the relevant time differences satisfy $k=O(\log N)$. If $|\log r_N|$ is comparable to $\log N$, then taking $C'$ large enough makes the condition $k>C'|\log r_N|$ incompatible with the intra-cluster bound $k=O(\log N)$, and this subcase becomes empty. Using \eqref{regasm2} with some $\varepsilon\in (0,r_N)$ to be determined later, there exists a $C^{\kappa}$ smooth function $\theta:M\rightarrow[0,1]$ such that
\begin{equation*}
\|g_1-\theta\|_{L^1} \ll\varepsilon r_N^a \quad \text{and} \quad \|\theta\|_{C^{\kappa}} \ll\varepsilon^{-\kappa} r_N^{-a'}.
\end{equation*}
Denote $\bar{\theta}=\theta - \int_M \theta\d\mu$. By applying exponential mixing, we obtain
\begin{align*}
\left| \int_M \bar{g}_{1} \cdot \bar{g}_{1} \circ T^{k} \d\mu \right| &\ll \left| \int_M \bar{\theta} \cdot \bar{\theta} \circ T^{k} \d\mu \right| + \varepsilon r_N^{a} \\
& \ll e^{-\gamma k} \varepsilon^{-2\kappa} r_N^{-2a'} +\varepsilon r_N^a.
\end{align*}
Choosing $\varepsilon=e^{-\frac{\gamma k}{2\kappa+1}}$ guarantees $\varepsilon<r_N$ whenever $C'>(2\kappa+1)/\gamma$, since $k>C'\left|\log r_N\right|$. We then obtain
\begin{equation*}
\left| \int_M \bar{g}_{1} \cdot \bar{g}_{1} \circ T^{k} \d\mu \right| \ll e^{-\frac{\gamma k}{2\kappa+1}} r_N^{-2a'}.
\end{equation*}
If $k=C'|\log r_N| + i$ for $i\in\mathbb{N},$ then we have
\begin{equation*}
\left| \int_M \bar{g}_{1} \cdot \bar{g}_{1} \circ T^{k} \d\mu \right| \ll r_N^{\frac{\gamma C'}{2\kappa+1}-2a'}e^{-\frac{\gamma i}{2\kappa+1}}\ll r_N^{100d} e^{-\frac{\gamma i}{2\kappa+1}}
\end{equation*}
for large $C'$.

The same argument, with the corresponding smooth approximations of $g_2$ and of the cross terms, gives
\begin{equation*}
\left| \int_M \bar{g}_{2} \cdot \bar{g}_{2} \circ T^{k} \d\mu \right| \ll r_N^{100d} e^{-\frac{\gamma i}{2\kappa+1}}
\end{equation*}
and
\begin{equation*}
\left| \int_M \bar{g}_{j} \cdot \bar{g}_{j'} \circ T^{k} \d\mu \right| \ll r_N^{100d} e^{-\frac{\gamma i}{2\kappa+1}},
\end{equation*}
for $j\neq j'$.

Step 4.2: Negligibility outside the diagonal pair configurations.

Let
\begin{equation*}
\Gamma'=\left\{\textbf{n} \in \Gamma \;|\; n_{i^s_1}=n_{i^s_2} \text{ and } j(i^s_1)=j(i^s_2) \;\; \forall s=1,\ldots,\frac{m}{2}\right\},
\end{equation*}
then, using the estimates from $j\neq j'$ in case 4.1~(A), and cases 4.1~(B) and 4.1~(C) above, we obtain
\begin{align*}
&\sum_{\textbf{n} \in \Gamma\setminus \Gamma'} \prod_{s=1}^{\frac{m}{2}} \left|\int_M \bar{g}_{j(i^s_1)} \cdot \bar{g}_{j(i^s_2)} \circ T^{n_{i^s_2}-n_{i^s_1}} \d\mu\right|\\
&\ll N^{\frac{m}{2}} r_N^{\frac{md}{2}-m_2 \beta}\left(r_N^\beta(1+|\log r_N|)+r_N^d \right)
+ N^{\frac{m}{2}} \left|\log r_N \right|^{\frac{m}{2}} r_N^{\frac{md}{2} - m_2 \beta} \left|\log r_N \right|^{-100m}\\
&\ll N^{\frac{m}{2}} r_N^{\frac{md}{2}-m_2 \beta} \left|\log r_N \right|^{-99m}.
\end{align*}
Note that here we used the fact that all cases, where one of the $|n_{i^s_1}-n_{i^s_2}|>C'|\log r_N|$, are negligible by summing the estimate for $i$ from Case 4.1~(C).

Step 4.3: The diagonal pair contribution.

Using \eqref{mainest1} and \eqref{mainest2}, for $\mathbf{n}\in \Gamma'$ it holds that
$$
\prod_{s=1}^{\frac{m}{2}} \int_M \bar{g}_{j(i^s_1)}^2 \d\mu\sim\left(B_d \rho(x^*)\right)^{\frac{m}{2}} r_N^{\frac{md}{2}-m_2\beta } \sigma^{m_2}.
$$
If either $m_1$ or $m_2$ is odd, then $\Gamma'=\varnothing$, since indices of type $g_1$ and $g_2$ must be paired separately. Otherwise, there are $(m_1-1)!!$ and $(m_2-1)!!$ possible pairings within each type. For each fixed pairing, this leaves $m/2$ free time variables, hence $N^{\frac{m}{2}}$ choices. The cluster-separation condition excludes only configurations where two such times lie within distance $O(\log N)$, whose number is $O(N^{\frac{m}{2}-1}\log N)$. Hence
$$\#\Gamma'\sim N^{\frac{m}{2}}(m_1-1)!!(m_2-1)!!.
$$
It follows that
$$
\sum_{\mathbf{n} \in \Gamma'} \prod_{s=1}^{\frac{m}{2}} \int_M \bar{g}_{j(i^s_1)}^2 \d\mu \sim \left(B_d \rho(x^*)\right)^{\frac{m}{2}} N^{\frac{m}{2}} r_N^{\frac{md}{2}-m_2\beta } \omega(m_1,m_2),
$$
which completes the proof of \eqref{momentgammaclaim}.
\end{proof}

\section{Lightly trimmed SLLN}\label{LT}
In this section, we assume that the system $(M, \mathcal{B}(M), \mu, T)$ is exponentially mixing, and that $\Ord_{x^*}(f) = d$. We establish a lightly trimmed SLLN, which proves Theorem \ref{lightSLLN}. In light of Lemma \ref{strongsklem}, it suffices to show a SLLN for $\hat{S}_N^K$.

Define
$$
r_N=\left(\frac{(\log \log N)^6}{N \log N}\right)^{\frac{1}{d}}\quad \text{and}\quad f_N=f \cdot \left(1-\one_{B_{r_N}(x^*)}\right).
$$
The subsequent lemma bounds the probability that the trimmed sum differs from the original sum by more than a given threshold.

\begin{lemma}[Trimmed Sum Approximation Error]\label{TSAR}
Under the hypothesis above, we have
\begin{equation}\label{SumkDf}
\mu\left(\left| \hat{S}_N^K(f) - S_N(f_N) \right|>\frac{KC_0N\log N}{(\log \log N)^6}\right) \ll (\log N)^{-\frac{3}{2}},
\end{equation}
where the constant $C_0$ is defined in \eqref{DeCf}.
\end{lemma}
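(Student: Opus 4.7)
The plan is to decompose the difference $\hat{S}_N^K(f)-S_N(f_N)$ in terms of orbit visits to the small ball $B_{\delta_N}(x^*)$, observe that exceeding the threshold forces the orbit to visit this ball more than $K$ times, and then bound the probability of this rare event by a second moment estimate using slow recurrence at short time scales and exponential mixing at long ones.

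Define the visit count $N_N(x) := \#\{0 \le n < N : T^n x \in B_{\delta_N}(x^*)\}$. Since the closest orbit points $\hat{x}_1, \dots, \hat{x}_{N_N(x)}$ coincide with the visits to the ball, and $f_N$ differs from $f$ only inside $B_{\delta_N}(x^*)$,
$$
\hat{S}_N^K(f)(x) - S_N(f_N)(x) = \sum_{l=1}^{N_N(x)} f(\hat{x}_l) - \sum_{l=1}^K f(\hat{x}_l).
$$
When $N_N(x) \le K$, the points $\hat{x}_{N_N+1}, \dots, \hat{x}_K$ lie outside $B_{\delta_N}(x^*)$, so \eqref{DeCf} bounds each such term by $C_0 \delta_N^{-d} = C_0 N \log N / (\log\log N)^6$, giving $|\hat{S}_N^K(f) - S_N(f_N)| \le K C_0 N \log N / (\log\log N)^6$. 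Thus the event in \eqref{SumkDf} is entirely contained in $\{N_N \ge K+1\} \subseteq \{N_N \ge 2\}$, and it suffices to prove $\mu(N_N \ge 2) \ll (\log N)^{-3/2}$.

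For this, bound $\mathbf{1}_{N_N \ge 2} \le \tfrac12 N_N(N_N-1)$ and expand:
$$
\mu(N_N \ge 2) \le \sum_{m=1}^{N-1} (N-m)\,\mu\bigl(B_{\delta_N}(x^*) \cap T^{-m} B_{\delta_N}(x^*)\bigr).
$$
Split the sum at $m = C_1 |\log \delta_N|$: for $m$ below this cutoff, slow recurrence at $x^*$ (Definition \ref{SlowRec}), applied at scale $\delta_N$ with arbitrarily large $A$, gives $\mu(B_{\delta_N} \cap T^{-m} B_{\delta_N}) \le \mu(B_{\delta_N}) |\log \delta_N|^{-A}$; for $m$ above the cutoff, Lemma \ref{MixEmB} produces the leading term $\mu(B_{\delta_N})^2$ together with a mixing remainder decaying exponentially in $m$. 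Using $\mu(B_{\delta_N}) \sim B_d \rho(x^*) \delta_N^d$, the two contributions combine to
$$
\mu(N_N \ge 2) \ll N^2 \mu(B_{\delta_N})^2 + N \mu(B_{\delta_N}) |\log \delta_N|^{1-A} \asymp \frac{(\log\log N)^{12}}{(\log N)^2},
$$
for $A$ chosen sufficiently large, and this is $o\bigl((\log N)^{-3/2}\bigr)$.

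The conceptual content lies in Step 1: the observation that removing the $K$ closest orbit points to $x^*$ is algebraically almost identical to truncating $f$ on $B_{\delta_N}(x^*)$, with a discrepancy governed entirely by $N_N(x)$. After that, the crux is a second-moment estimate for $N_N$ that is a miniature version of Lemma \ref{truncvarlem}; the tuning of $\delta_N \asymp \bigl((\log\log N)^6/(N\log N)\bigr)^{1/d}$ is precisely calibrated to make the expected visit count $N\mu(B_{\delta_N})\asymp(\log\log N)^6/\log N$ small enough that $\bigl(N\mu(B_{\delta_N})\bigr)^2$ beats $(\log N)^{-3/2}$ by a polylogarithmic margin.
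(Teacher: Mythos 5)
Your proposal is correct and follows essentially the same route as the paper: the same identification of the discrepancy with visits to $B_{\delta_N}(x^*)$, the same containment of the bad event in $\{S_N(\one_{B_{\delta_N}})\geq K+1\}$ via \eqref{DeCf}, and the same second-factorial-moment bound split into a short-range part (slow recurrence) and a long-range part (Lemma~\ref{MixEmB}). The only cosmetic difference is that you pass to $\{N_N\geq 2\}$ and use the $K=1$ Markov bound instead of dividing by $\binom{K+1}{2}$, which changes nothing.
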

\begin{proof}
We observe that
$$\left| \hat{S}_N^K(f) - S_N(f_N) \right| = \left| \sum_{n=0}^{N-1} f(T^n x) \one_{B_{r_N}(x^*)}(T^n x) - \sum_{i=1}^K f(\hat{x}_i) \right|.$$

If $S_N(\one_{B_{r_N}(x^*)})(x)=m\leq K,$ then $\hat{x}_1,\ldots,\hat{x}_m \in B_{r_N}(x^*)$, where $\hat{x}_j$ is the $j$th closest orbit point to $x^*$ as defined in Section~\ref{nearsec}, and consequently,
\begin{equation*}
\left| \hat{S}_N^K(f)(x) - S_N(f_N)(x) \right| \leq \sum_{i=m+1}^K f(\hat{x}_i) \leq \frac{KC_0N\log N}{(\log \log N)^6}.
\end{equation*}
It follows that
$$\left\{\left|\hat{S}_N^K(f) - S_N(f_N)\right|>\frac{KC_0N\log N}{(\log \log N)^6}\right\}\subset \left\{S_N(\one_{B_{r_N}(x^*)}) \geq K+1\right\}.$$

Applying Markov's inequality, we obtain for sufficiently large $N,$
\begin{equation}\label{TriDf}
\begin{aligned}
\mu&\left( \left| \hat{S}_N^K(f) - S_N(f_N) \right| >\frac{KC_0N\log N}{(\log \log N)^6}\right)\\
&\leq\mu\left( S_N(\one_{B_{r_N}(x^*)}) \geq K+1 \right) \\
&\leq \mu\left(S_N(\one_{B_{r_N}(x^*)})\left(S_N(\one_{B_{r_N}(x^*)})-1\right)\geq (K+1)K\right)\\
&\leq \frac{2}{(K+1)K}\E\left(\binom{S_N(\one_{B_{r_N}(x^*)})}{2}\right).
\end{aligned}
\end{equation}
We compute the  second factorial moment
$$
\mathbb{E}\left( \binom{S_N(\mathbf{1}_{B_{r_N}(x^*)})}{2} \right) = \sum_{0 \leq j < k \leq N-1} \mu\left( T^{-j} B_{r_N}(x^*) \cap T^{-k} B_{r_N}(x^*) \right)
$$
and consider two cases based on the distance between indices $j$ and $k.$

Case 1 (Long Range): If $k-j > C \log N$ for some large constant $C>0$, using Lemma \ref{MixEmB} and the fact that $\mu(B_{r_N}(x^*)) \asymp (\log \log N)^6 / (N \log N)$, we have
$$
\mu\left( T^{-j} B_{r_N}(x^*) \cap T^{-k} B_{r_N}(x^*) \right) = \mu(B_{r_N}(x^*))^2 + O\left( \mu(B_{r_N}(x^*))^{\frac{d-1}{d} \cdot \frac{2\kappa}{2\kappa+1}} N^{-\frac{C \gamma}{2\kappa+1}} \right).
$$
Thus, the contribution from long-range pairs is $O\left( (\log \log N)^{12} / (N^2(\log N)^2) \right).$
Since the number of such pairs is at most $N^2$, their total contribution is
$$O\left( \frac{(\log \log N)^{12}}{(\log N)^2} \right).$$

Case 2 (Short Range): By the slow recurrence property with $A = 3$, for large $N$, we obtain
$$
\mu\left( T^{-j} B_{r_N}(x^*) \cap T^{-k} B_{r_N}(x^*) \right) \ll \mu(B_{r_N}(x^*)) (\log N)^{-2} \ll \frac{(\log\log N)^6}{N (\log N)^3}.
$$
The number of such pairs is at most $C N \log N$, so their total contribution is
\begin{equation*}
O\left( \frac{(\log\log N)^6}{(\log N)^2} \right).
\end{equation*}

Combining both cases, we have
$$
\mathbb{E}\left( \binom{S_N(\mathbf{1}_{B_{r_N}(x^*)})}{2} \right) \ll \frac{(\log \log N)^{12}}{(\log N)^2} + \frac{(\log\log N)^6}{(\log N)^2} \ll (\log N)^{-\frac{3}{2}}.
$$

Finally, we complete the proof using the inequality \eqref{TriDf}.
\end{proof}

\begin{lemma}[Ergodic Sum of the Truncated Function]\label{ESTF}
There exists a sequence $\epsilon_N \to 0$ such that
$$
\mu\left( \left| S_N(f_N)-\operatorname{Res}_{x^*}(f)B_d \rho(x^*) N \log N \right| > \epsilon_N N \log N \right) \ll (\log N)^{-1} (\log \log N)^{-4}.
$$
\end{lemma}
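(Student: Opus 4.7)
The plan is to apply Chebyshev's inequality to $S_N(f_N)$, reducing the lemma to an asymptotic formula for the mean $\E S_N(f_N) = N\E f_N$ together with a variance bound furnished directly by Lemma~\ref{truncvarlem}.

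For the expectation, I would fix a small $r>0$ and split
\[
\E f_N = \int_{B_r(x^*)\setminus B_{\delta_N}(x^*)} f\,d\mu + \int_{M\setminus B_r(x^*)} f\,d\mu,
\]
the second integral being a finite constant depending only on $r$. On the first region, \eqref{Estf} sandwiches $f(x)$ between $(1\pm\epsilon(r))\operatorname{Res}_{x^*}(f)\,d(x,x^*)^{-d}$, and the continuity of $\rho$ at $x^*$ gives $\rho(x)=\rho(x^*)(1+o_r(1))$ for $d(x,x^*)<r$. Passing to geodesic polar coordinates, using that the volume element takes the form $(1+O(s^2))s^{d-1}\,ds\,d\Omega$ and that the unit sphere has surface area $dB_d$, I obtain
\[
\int_{B_r(x^*)\setminus B_{\delta_N}(x^*)} d(x,x^*)^{-d}\,d\mathrm{vol}(x) = dB_d(\log r - \log\delta_N) + O(1).
\]
Since $-\log\delta_N = \tfrac{1}{d}(\log N - 5\log\log N)$, multiplying by $\operatorname{Res}_{x^*}(f)\rho(x^*)$ and $N$ yields
\[
\E S_N(f_N) = \operatorname{Res}_{x^*}(f)\,B_d\,\rho(x^*)\,N\log N\,(1+o_r(1)) + O(N\log\log N).
\]
A standard diagonal argument, letting $r=r_N\to 0$ slowly, then produces a sequence $\eta_N\to 0$ with $|\E S_N(f_N) - \operatorname{Res}_{x^*}(f)B_d\rho(x^*)N\log N|\leq \eta_N N\log N$.

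For the random fluctuations, Lemma~\ref{truncvarlem} with $\beta=d$ and truncation scale $\delta_N$ gives
\[
\Var S_N(f_N) \ll N\delta_N^{-d} = \frac{N^2\log N}{(\log\log N)^6}.
\]
Setting $\epsilon_N = \max\bigl(2\eta_N,(\log\log N)^{-1}\bigr)$, which tends to $0$, the triangle inequality and Chebyshev's inequality then give
\[
\mu\bigl(|S_N(f_N) - \operatorname{Res}_{x^*}(f)B_d\rho(x^*)N\log N| > \epsilon_N N\log N\bigr) \leq \frac{4\Var S_N(f_N)}{\epsilon_N^2 N^2(\log N)^2} \ll \frac{1}{\log N(\log\log N)^4},
\]
where the last step uses $\epsilon_N\geq(\log\log N)^{-1}$.

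The main obstacle is quantitative bookkeeping rather than any conceptual difficulty. The exponent $6$ on $\log\log N$ in the definition of $\delta_N$ was plainly engineered so that the variance bound $\delta_N^{-d}$ leaves exactly enough slack for Chebyshev to beat $(\log N)^{-1}(\log\log N)^{-4}$ with $\epsilon_N$ as small as $(\log\log N)^{-1}$. The only subtle point is the two-parameter limit in the expectation: one must let $r\to 0$ and $N\to\infty$ in tandem to upgrade the $r$-dependent error $o_r(1)\log N$ into a single $\eta_N\log N$ with $\eta_N\to 0$, which the standard diagonal trick handles cleanly.
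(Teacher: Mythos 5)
Your proof is correct and follows essentially the same route as the paper: establish $\E f_N \sim \Res_{x^*}(f)B_d\rho(x^*)\log N$ (which you flesh out via polar coordinates, where the paper simply asserts it), then apply Chebyshev with the variance bound $\Var S_N(f_N)\ll N\delta_N^{-d}$ from Lemma~\ref{truncvarlem} and the choice $\epsilon_N\gtrsim(\log\log N)^{-1}$. The only blemish is the arithmetic slip $-\log\delta_N=\tfrac1d(\log N-5\log\log N)$ (it should be $\tfrac1d(\log N+\log\log N-6\log\log\log N)$), which is harmless since both are $\tfrac1d\log N+O(\log\log N)$.
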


\begin{proof}
First note that
$$\E f_N \sim \operatorname{Res}_{x^*}(f)B_d \rho(x^*) \log N.$$
Let $\varepsilon_N \to 0$ be chosen such that
\begin{equation}\label{Etdf}
\left|\mathbb{E}f_N- \operatorname{Res}_{x^*}(f)B_d \rho(x^*) \log N\right|<\varepsilon_N\log N.
\end{equation}
Letting $\epsilon_N=2\varepsilon_N + (\log\log N)^{-1}$, Chebyshev's inequality yields
\begin{align*}
& \mu\left( \left| S_N(f_N)- \operatorname{Res}_{x^*}(f)B_d \rho(x^*) N \log N \right| > \epsilon_N N \log N \right)\\
\leq & \mu \left( \Bigl| S_N(f_N) - \E S_N(f_N) \Bigr| > \frac{\epsilon_N N \log N}{2} \right)
\ll \frac{\Var\Bigl(S_N(f_N)\Bigr)}{(\epsilon_N N \log N)^2}.
\end{align*}

Applying \eqref{lcmb} with $s=2$ and
$\beta=d$, we obtain
$$\E f_N^2=\int_{M\setminus B_{r_N}(x^*)}f^2\d\mu\ll r_N^{-d}.
$$
For $1\le k\le C'|\log r_N|$, Lemma~\ref{truncvarlem} with
$A=3$ gives
$$\E(\bar{f}_N\cdot\bar{f}_N\circ T^k)\le \E(f_N \cdot f_N\circ T^k)\ll r_N^{-d}|\log r_N|^{-3},
$$
where $\bar{f}_N=f_N-\E f_N.$

For $k=C'|\log r_N|+i$, $i\ge1$, it follows from Case~4.1~(C) in the proof of Proposition~\ref{momentprop} that for $C'$ sufficiently large,
$$\E(\bar{f}_N\cdot\bar{f}_N\circ T^k)\ll r_N^{100d} e^{-\frac{\gamma i}{2\kappa+1}}.
$$
Consequently,
\begin{align*}
\operatorname{Var}(S_N(f_N))&=N\Var(f_N)
+2\sum_{k=1}^{N-1}(N-k)\E(\bar{f}_N\cdot\bar{f}_N\circ T^k)\\
&\ll Nr_N^{-d}+N\sum_{1\le k\le C'|\log r_N|}r_N^{-d}|\log r_N|^{-3}+N\sum_{i\ge1}
r_N^{100d}e^{-\frac{\gamma i}{2\kappa+1}}\\
&\ll Nr_N^{-d}\ll\frac{N^2 \log N}{(\log \log N)^6},
\end{align*}
and we conclude
\begin{align*}
\mu&\left(\left|S_N(f_N)- \Res_{x^*}(f)B_d \rho(x^*) N \log N\right|>\epsilon_N N \log N \right)\\
&\ll \frac{N^2 \log N}{(\log \log N)^6(\epsilon_N N \log N)^2}\\
&\ll (\log N)^{-1} (\log \log N)^{-4}.
\end{align*}
\end{proof}

\begin{proof}[Proof of Theorem \ref{lightSLLN}]
By Lemmas \ref{TSAR} and \ref{ESTF}, for $$\omega_N=\epsilon_N+\frac{KC_0}{(\log\log N)^6}\to 0$$ we obtain
$$
\mu\left(\biggl| \frac{\hat{S}_N^K(f)}{N \log N}-\operatorname{Res}_{x^*}(f)B_d \rho(x^*) \biggr| >\omega_N \right) \ll (\log N)^{-1} (\log \log N)^{-4}.
$$
Now consider the subsequence defined by
$$N_l = \left\lceil \exp\left(\frac{l}{(\log l)^2}\right) \right\rceil,$$
from the above we obtain
\begin{align*}
\sum_{l=2}^\infty & \mu\left(\biggl|\frac{\hat{S}_{N_l}^K(f)}{N_l \log N_l}-\operatorname{Res}_{x^*}(f)B_d \rho(x^*) \biggr|>\omega_{N_l} \right)\\
& \ll \sum_{l=2}^\infty (\log N_l)^{-1}(\log\log N_l)^{-4}\\
&\ll\sum_{l=2}^{\infty} l^{-1}(\log l)^{-2}\\
&< \infty.
\end{align*}
Therefore, by the first Borel--Cantelli lemma, for $\mu$-almost every $x,$
$$
\lim_{l\to \infty} \frac{\hat{S}_{N_l}^K(f)(x)}{N_l \log N_l} = \Res_{x^*}(f)B_d \rho(x^*).
$$

For arbitrary $N \in [N_l, N_{l+1}],$ the monotonicity of the trimmed sum $\hat{S}_N^K(f)$ implies
$$
\frac{\hat{S}_{N_l}^K(f)}{N_{l+1} \log N_{l+1}} \leq \frac{\hat{S}_N^K(f)}{N \log N} \leq \frac{\hat{S}_{N_{l+1}}^K(f)}{N_l \log N_l}.
$$
Taking the limit as $l \to \infty$, and observing that $\lim_{l \to \infty}N_{l+1}/N_l = 1$, we conclude that
$$
\lim_{N \to \infty} \frac{\hat{S}_N^K(f)(x)}{N \log N} = \operatorname{Res}_{x^*}(f)B_d \rho(x^*)
$$
for $\mu$-almost every $x$. Finally, Lemma \ref{strongsklem} implies that also
$$
\lim_{N \to \infty} \frac{S_N^K(f)(x)}{N \log N} = \operatorname{Res}_{x^*}(f)B_d \rho(x^*).
$$
\end{proof}

\begin{proof}[Proof of Corollary~\ref{SNCP}]
Theorem~\ref{lightSLLN} establishes that the normalized trimmed sum $S_N^K(f)/(N \log N)$ converges almost surely to $\operatorname{Res}_{x^*}(f)B_d \rho(x^*) $. To pass from trimmed to untrimmed sums, we observe that the contribution from the $K$ largest terms is negligible in probability:

\begin{align*}
&\mu\left(\sum_{i=1}^{K} f(x_i^N)> KN \log \log N \right)\leq \mu\left(\max_{0\le n<N}f(T^nx)>N\log\log N
\right)\\ &\leq\sum_{n=0}^{N-1}\mu\left(f\circ T^n>N\log\log N\right)=
N\mu\left(f>N\log\log N\right)\ll (\log \log N)^{-1} \to 0.
\end{align*}
Therefore, the difference between $S_N(f)/(N \log N)$ and $S_N^K(f)/(N \log N)$ vanishes in probability, and the full sum shares the same limit in probability, that is
$$
\lim_{N \to \infty} \mu\left( \biggl| \frac{S_N(f)}{N \log N}-\operatorname{Res}_{x^*}(f)B_d \rho(x^*)\biggr|> \varepsilon \right) = 0, \quad \forall \varepsilon > 0.
$$
\end{proof}

\section{Intermediately trimmed SLLN}\label{IT}
Suppose that the system $(M, \mathcal{B}(M), \mu, T)$ is exponentially mixing of all orders, and that $\Ord_{x^*}(f) = \beta > d$. We establish an intermediately trimmed SLLN for ergodic sums, thereby proving Theorem \ref{interSLLN}. According to Lemma~\ref{strongsklem}, it suffices to prove the SLLN for $\hat{S}_N^{k(N)}$.

The following lemmas are moment estimates that follow directly from Proposition \ref{momentprop}.

\begin{lemma}\label{momentlem}
Let $r_N\to0$ satisfy $N r_N^{d} \rightarrow \infty$ and put
$$f_N=f \cdot \left(1-\one_{B_{r_N}(x^*)}\right),\quad\bar{f}_N=f_N-\E f_N.
$$
Then, for every fixed $m\ge 1$,
$$
\E\left( \left|S_N(\bar{f}_N)\right|^m \right) \ll N^{\frac{m}{2}} r_N^{\frac{md}{2}-m\beta}.
$$
\end{lemma}

\begin{proof}
For even $m$, the conclusion follows from Proposition~\ref{momentprop} applied with the regularly adapted sequence $A_N=B_{r_N}(x^*),$ and the choices $m_1=0,$ $m_2=m.$
If $m$ is odd, set $q=m+1$. Then
$$\E\left(\left|S_N(\bar f_N)\right|^m\right)\le
\left(\E\left(\left|S_N(\bar f_N)\right|^q\right)\right)^{\frac{m}{q}}.
$$
Applying the even case to $q$ gives
$$\E\left(\left|S_N(\bar f_N)\right|^m\right)\ll\left(N^{\frac{q}{2}} r_N^{\frac{qd}{2}-q\beta}\right)^{\frac{m}{q}}=N^{\frac{m}{2}} r_N^{\frac{md}{2}-m\beta},$$
which proves the lemma.
\end{proof}

\begin{lemma}\label{momentlemind}
Let $r_N\to0$ with $N r_N^{d} \rightarrow \infty$ and put
$$\bar{\chi}_N=\one_{B_{r_N}(x^*)}-\mu\bigl(B_{r_N}(x^*)\bigr).$$
Then, for every fixed $m\ge1$,
\begin{equation*}
\E\left(\bigl|S_N(\bar{\chi}_N)\bigr|^m\right) \ll N^{\frac{m}{2}} r_N^{\frac{md}{2}}.
\end{equation*}
\end{lemma}

\begin{proof}
For even $m$, the result follows by applying Proposition~\ref{momentprop} with the regularly adapted sequence $A_N=B_{r_N}(x^*)$ and parameters $m_1=m,m_2=0$. The odd case then follows from H\"{o}lder's inequality, exactly as in the proof of Lemma~\ref{momentlem}.
\end{proof}

For the remainder of this section, let $k(N)\rightarrow \infty$ be a sequence with $k(N)=o(N)$, and define $r_N>0$ by
\begin{equation*}
\mu\bigl(B_{r_N}(x^*)\bigr)=\frac{k(N)}{N}.
\end{equation*}

\begin{lemma}[Trimmed Sum Approximation Error]\label{interTSAR}
Let $f_N=f \cdot \left(1 - \one_{B_{r_N}(x^*)} \right).$ Then, for every $A>0$ and small $\epsilon>0,$ there exists a constant $C>0$ such that
\begin{equation*}
\mu\left(\left| \hat{S}_N^{k(N)}(f) - S_N(f_N) \right|>C N^{\alpha} k(N)^{1-\alpha-\epsilon}\right) \ll k(N)^{-A}.
\end{equation*}
\end{lemma}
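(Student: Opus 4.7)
The plan is to show that, with $\mu$-probability $1-O(k(N)^{-A})$, the $k(N)$-th closest orbit point to $x^*$ lies in a thin annulus around the critical radius $r_N$; on that event the difference between $\hat S_N^{k(N)}(f)$ and $S_N(f_N)$ is controlled by an ergodic sum of $f$ over that annulus, which turns out to be negligible. Let $R_N(x):=d(\hat x_{k(N)},x^*)$. Since $f_N=f(1-\one_{B_{r_N}(x^*)})$ and the $k(N)$ closest orbit points are precisely those lying in $B_{R_N}(x^*)$, one has
\begin{equation*}
\hat S_N^{k(N)}(f)-S_N(f_N)=\sum_{n=0}^{N-1}f(T^nx)\bigl(\one_{B_{r_N}(x^*)}(T^nx)-\one_{B_{R_N}(x^*)}(T^nx)\bigr).
\end{equation*}
Setting $\eta:=k(N)^{-\epsilon}$ and $A_N:=B_{r_N(1+\eta)}(x^*)\setminus B_{r_N(1-\eta)}(x^*)$, on the event $E_1:=\{R_N\in[r_N(1-\eta),r_N(1+\eta)]\}$ the summands are supported in $A_N$, so that \eqref{DeCf} yields
\begin{equation*}
|\hat S_N^{k(N)}(f)-S_N(f_N)|\leq C_0(r_N(1-\eta))^{-\beta}S_N(\one_{A_N}).
\end{equation*}

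To bound $\mu(E_1^c)$, I would note that $\{R_N<r_N(1-\eta)\}\subseteq\{S_N(\one_{B_{r_N(1-\eta)}(x^*)})\geq k(N)\}$, and since $\E S_N(\one_{B_{r_N(1-\eta)}(x^*)})\sim(1-\eta)^dk(N)$, this event forces an upward deviation of order $\eta k(N)$ from the mean. Markov's inequality applied to the $2m$-th centered moment, combined with Lemma~\ref{momentlemind} (which gives $\E S_N(\bar\chi_N)^{2m}\ll (Nr_N^d)^m\asymp k(N)^m$ for $\chi_N=\one_{B_{r_N(1-\eta)}(x^*)}$), bounds this probability by $k(N)^m/(\eta k(N))^{2m}=k(N)^{-(1-2\epsilon)m}$. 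The upper bound on $R_N$ is treated identically. For $\epsilon<1/2$ and $m$ large in terms of $A$, this gives $\mu(E_1^c)\ll k(N)^{-A}$.

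Next I would control $S_N(\one_{A_N})$. Writing $\one_{A_N}=\one_{B_{r_N(1+\eta)}(x^*)}-\one_{B_{r_N(1-\eta)}(x^*)}$, the triangle inequality in $L^{2m}(\mu)$ and another application of Lemma~\ref{momentlemind} yield $\E|S_N(\bar\one_{A_N})|^{2m}\ll k(N)^m$. Since $\E S_N(\one_{A_N})=N\mu(A_N)\asymp\eta k(N)=k(N)^{1-\epsilon}$, Markov's inequality then gives $S_N(\one_{A_N})\leq Ck(N)^{1-\epsilon}$ outside a set of $\mu$-measure $\ll k(N)^{-A}$. Combining this with the preceding paragraph and using $r_N^{-\beta}\asymp(N/k(N))^{\alpha}$ produces $|\hat S_N^{k(N)}(f)-S_N(f_N)|\ll N^{\alpha}k(N)^{1-\alpha-\epsilon}$ on the good event, which is exactly the claim.

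The main technical delicacy is the restriction $\epsilon<1/2$ imposed by the fluctuation scale $\sqrt{k(N)}$ produced by Lemma~\ref{momentlemind}: if $\epsilon$ approached $1/2$ the Markov bound for both the concentration of $R_N$ and the annular count would degenerate, but the hypothesis only requires \emph{small} $\epsilon$, so this is harmless. The rest is clean bookkeeping of concentration inequalities, contingent on the moment bounds of Lemma~\ref{momentlem} and Lemma~\ref{momentlemind} having been established (for which exponential mixing of all orders is the natural assumption).
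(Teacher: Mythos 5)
Your argument is correct in substance and rests on the same engine as the paper's proof, namely the moment bounds of Lemma~\ref{momentlemind} for centered ball-counting functions plus Markov's inequality, but the good event is chosen differently. You bracket the $k(N)$-th nearest distance $R_N$ in the multiplicative annulus $[(1-\eta)r_N,(1+\eta)r_N]$ with $\eta=k(N)^{-\epsilon}$ and then control the orbit count in that annulus; the paper instead works at the single radius $r_N$ (where $N\mu(B_{r_N})=k(N)$ holds \emph{exactly} by definition, so the event $|S_N(\one_{B_{r_N}})-k(N)|\geq k(N)^{1-\epsilon}$ needs no volume asymptotics at all) and supplements this with the crude event $S_N(\one_{B_{r_N/2}})<k(N)$, which only requires the constant-factor asymptotic $\mu(B_{r/2})\sim 2^{-d}\mu(B_r)$ and guarantees $d(\hat{x}_{k(N)},x^*)\geq r_N/2$, hence $f\leq C_0 2^{\beta}r_N^{-\beta}$ on all discrepancy points. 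Your version is geometrically more transparent (it localizes the discrepancy in a thin annulus), while the paper's is more economical in its use of the measure of small balls.

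The one step you should justify more carefully is the assertion that $\{R_N<(1-\eta)r_N\}$ forces an upward deviation of order $\eta k(N)$, which you derive from ``$\E S_N(\one_{B_{r_N(1-\eta)}})\sim(1-\eta)^d k(N)$''. Since $\eta\to 0$, this does not follow from the bare asymptotic $\mu(B_r)\sim B_d\rho(x^*)r^d$: the $o(1)$ relative error there (coming from the continuity of $\rho$ and the curvature correction to the ball volume) need not be $o(\eta)$ when $k(N)$ grows fast. What you actually need is the two-sided bound $\mu(B_{r_N})-\mu(B_{(1-\eta)r_N})\asymp\eta\,\mu(B_{r_N})$, which does hold, but via the annulus itself: the Riemannian volume of $\{(1-\eta)r_N\leq d(x,x^*)\leq r_N\}$ is $\asymp\eta r_N^d$ because the area of geodesic spheres of radius $s$ is $\asymp s^{d-1}$ for small $s$, and $\rho$ is continuous with $\rho(x^*)>0$. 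The same remark applies to your bound $N\mu(A_N)\asymp\eta k(N)$. With that supplement (and the harmless caveat that ties in the distances $d(T^nx,x^*)$ occur only on a null set, so that $\sum_{i\leq k(N)}f(\hat{x}_i)$ really is the sum of $f$ over the orbit points in $\bar{B}_{R_N}$), your proof goes through and yields the stated conclusion for every fixed $\epsilon<1/2$.
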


\begin{proof}
By the continuity of the density function of $\mu,$ we have
$$r_N^d\sim \frac{k(N)}{B_d \rho(x^*) N}.$$ Define the event
$$
E_N=\left\{\left|S_N\bigl(\one_{B_{r_N}(x^*)}\bigr)-k(N)\right|<k(N)^{1-\epsilon} \right\}\cap\left\{ S_N\bigl(\one_{B_{r_N/2}(x^*)}\bigr)<k(N) \right\}.
$$
On $E_N$, the difference between the trimmed sum and the truncated sum satisfies
$$
\left|\hat{S}_N^{k(N)}(f)-S_N(f_N)\right|<C_02^{\beta} r_N^{-\beta} k(N)^{1-\epsilon}< C N^{\alpha} k(N)^{1-\alpha - \epsilon},
$$
where $C_0 > 0$ is the constant from \eqref{DeCf} and $C=C_0(2 B_d\rho(x^*)+10)^{\beta}.$
It follows that
$$\left\{\left|\hat{S}_N^{k(N)}(f)-S_N(f_N) \right|>C N^{\alpha} k(N)^{1-\alpha - \epsilon}\right\}\subset E_N^c.$$
The probability of the complement $E_N^c$ is bounded by
\begin{equation}\label{EsCE}
\begin{aligned}
\mu(E_N^c)&\leq \mu\left(\left|S_N\left(\one_{B_{r_N}(x^*)}\right) - k(N)\right| \geq k(N)^{1-\epsilon} \right)\\
& + \mu\left( \left|S_N\left(\one_{B_{r_N/2}(x^*)}\right) - N \mu\bigl(B_{r_N/2}(x^*)\bigr) \right| \geq k(N) - N \mu\bigl(B_{r_N/2}(x^*)\bigr) \right)\\
&\leq \mu\left(\left|S_N\left(\one_{B_{r_N}(x^*)}\right)-k(N)\right| \geq k(N)^{1-\epsilon} \right)\\
&+ \mu\left(\left|S_N\left(\one_{B_{r_N/2}(x^*)}\right) - N\mu\bigl(B_{r_N/2}(x^*)\bigr)\right| \geq \tfrac{k(N)}{3} \right),
\end{aligned}
\end{equation}
where in the last step we used the fact that $N \mu(B_{r_N/2}(x^*))\sim 2^{-d} k(N)$.

By Lemma \ref{momentlemind}, for any integer $m \geq 2$,
\begin{align*}
\E\left( \left|S_N\left(\one_{B_{r_N}(x^*)}\right)-k(N)\right|^m \right)& \ll k(N)^{\frac{m}{2}} \quad \text{and} \\
\E \left(\left|S_N\left(\one_{B_{r_N/2}(x^*)}\right)-N\mu\bigl(B_{r_N/2}(x^*)\bigr)\right|^m\right)& \ll k(N)^{\frac{m}{2}}.
\end{align*}
Therefore, applying Markov's inequality to \eqref{EsCE}, we deduce
$$
\mu(E_N^c) \ll k(N)^{-\frac{m}{2}+m\epsilon} + k(N)^{-\frac{m}{2}} \ll k(N)^{-A},
$$
provided that $m>2A/(1-2\epsilon).$ This completes the proof.
\end{proof}

\begin{lemma}[Ergodic Sum of the Truncated Function]\label{interESTF}
Under the hypotheses of Lemma \ref{interTSAR}, for any $\epsilon>0$ and $A>0,$
\begin{equation*}
\mu\left(\left|S_N(f_N)-\E (S_N(f_N)) \right| > N^{\alpha} k(N)^{1-\alpha-\epsilon}\right) \ll k(N)^{-A}.
\end{equation*}
\end{lemma}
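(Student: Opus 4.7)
The plan is to deduce the bound directly from the moment estimate in Lemma~\ref{momentlem} combined with a high-moment Markov inequality. First I would verify that the truncation radius satisfies the hypothesis of Lemma~\ref{momentlem}: since $\rho$ is continuous at $x^*$, the defining relation $\mu(B_{r_N}(x^*)) = k(N)/N$ yields $r_N^d \sim k(N)/(B_d\rho(x^*) N)$, so $r_N^{-d} \asymp N/k(N) = o(N)$ because $k(N)\to\infty$.

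Next, for any even integer $m\geq 2$, applying Lemma~\ref{momentlem} with $\epsilon_N = r_N$ and $g_N = f_N$ gives, using $d(1/2-\alpha) = d/2 - \beta$,
\begin{equation*}
\E\bigl(S_N(f_N) - \E S_N(f_N)\bigr)^m \ll N^{m/2}\, r_N^{m(d/2-\beta)} \asymp N^{m/2}\left(\tfrac{k(N)}{N}\right)^{m(1/2-\alpha)} = N^{m\alpha}\, k(N)^{m(1/2-\alpha)}.
\end{equation*}
Markov's inequality applied to this $m$-th moment then yields
\begin{equation*}
\mu\bigl(|S_N(f_N) - \E S_N(f_N)| > N^\alpha k(N)^{1-\alpha-\epsilon}\bigr) \ll \frac{N^{m\alpha}\, k(N)^{m(1/2-\alpha)}}{N^{m\alpha}\, k(N)^{m(1-\alpha-\epsilon)}} = k(N)^{-m(1/2-\epsilon)}.
\end{equation*}
Taking $\epsilon < 1/2$ (which is no loss of generality, since the inequality only becomes stronger for smaller $\epsilon$) and choosing an even integer $m$ with $m \geq 2A/(1-2\epsilon)$ produces the claimed bound $\ll k(N)^{-A}$.

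No genuine obstacle remains at this stage; the real work---establishing the CLT-like scaling $N^{m/2}$ in the moment bound, which requires exponential mixing of all orders together with the slow recurrence of $x^*$ and the singularity structure of $f$---has been absorbed into Lemma~\ref{momentlem}. The proof of the present lemma is therefore a short bookkeeping calculation, whose only point of care is recognising that the favourable exponent $N^{m/2}$ (rather than the trivial $N^m$) is what permits arbitrarily large negative powers of $k(N)$ after balancing against the threshold $N^\alpha k(N)^{1-\alpha-\epsilon}$.
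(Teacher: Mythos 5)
Your proof is correct and takes essentially the same route as the paper's: both invoke Lemma~\ref{momentlem} with $g_N = f_N$ and $\epsilon_N = r_N$, use $r_N^d \asymp k(N)/N$ to rewrite the moment bound as $N^{m\alpha} k(N)^{m(1/2-\alpha)}$, and finish with Markov's inequality and an even $m > 2A/(1-2\epsilon)$. One small nit: your parenthetical ``no loss of generality'' for restricting to $\epsilon < 1/2$ has the monotonicity reversed (a larger $\epsilon$ makes the threshold $N^{\alpha}k(N)^{1-\alpha-\epsilon}$ smaller and the event larger, so the claim is \emph{stronger}, not weaker, for larger $\epsilon$), but this is immaterial since the paper's proof also implicitly assumes $\epsilon < 1/2$ via the choice of $m$ and the lemma is only applied for small $\epsilon$.
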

\begin{proof}
For $m\geq 2$, Lemma \ref{momentlem} yields
\begin{equation*}
\E(|S_N(\bar{f}_N)|^m) \ll N^{\frac{m}{2}} r_N^{- m\beta + \frac{md}{2}} \ll N^{m\alpha} k(N)^{\frac{m}{2}-m\alpha},
\end{equation*}
where $\bar{f}_N = f_N - \E f_N$. Using Markov's inequality, it follows that
\begin{align*}
\mu\left( \left| S_N(f_N) -\E S_N(f_N) \right| > N^{\alpha} k(N)^{1-\alpha-\epsilon} \right)
& \ll \frac{\E\left(\left|S_N(\bar{f}_N)\right|^m\right)}{N^{m\alpha} k(N)^{m-m\alpha-m\epsilon}}\\
& \ll k(N)^{-\frac{m}{2} + m \epsilon}.
\end{align*}
The desired result follows by selecting an integer $m>2A/(1-2\epsilon).$
\end{proof}

\begin{proof}[Proof of Theorem \ref{interSLLN}]
Using Lemmas \ref{interTSAR} and \ref{interESTF}, for $A>0$, we have
\begin{equation}\label{interineq}
\mu \left( \left| \hat{S}_N^{k(N)}(f)-\E S_N(f_N) \right| >2C N^{\alpha} k(N)^{1-\alpha-\epsilon} \right) \ll k(N)^{-A},
\end{equation}
where the constant $C>1$ is the one specified in Lemma~\ref{interTSAR}.

Define the subsequence $\tilde{N}_l = \lceil 2^{l^{\epsilon}} \rceil$ and construct $N_{l,j}$ recursively by setting $N_{l,0} = \tilde{N}_l$ and $N_{l,j+1} = \min(\tilde{N}_{l+1},N^*),$ where
\begin{equation*}
N^* = \max\left(N_{l,j} + 1, \;\max\Bigl(N\geq N_{l,j}: \;\frac{k(N)}{k(N_{l,j})} < 2^{l^{-\epsilon}}\Bigr)\right).
\end{equation*}
This ensures that the sequence $N_{l,j}$ increases monotonically from $\tilde{N}_l$ to $\tilde{N}_{l+1},$ and that the growth of $k(N)$ remains controlled at each step. Specifically, if $N_{l,j+2}<\tilde{N}_{l+1}$ then
\begin{equation}\label{kNgrowth}
\frac{k(N_{l,j+2})}{k(N_{l,j})} \geq 2^{l^{-\epsilon}}.
\end{equation}
Furthermore, since $k(\tilde{N}_{l+1}) \leq \tilde{N}_{l+1} \leq 2^{(l+1)^{\epsilon}} + 1$ and the cumulative growth factor over $2 l^{3\epsilon}$ steps is at least $(2^{l^{-\epsilon}})^{l^{3\epsilon}} = 2^{l^{2\epsilon}},$ the sequence must reach $ \tilde{N}_{l+1}$ within at most $2 l^{3\epsilon}$ steps. Therefore,
\begin{equation*}
N_{l,2 l^{3\epsilon}} = \tilde{N}_{l+1}.
\end{equation*}

Let $\{N_i\}$ be the ordered sequence of distinct values from $N_{l,j}$. Then for sufficiently small $\epsilon>0$ and $A>2 \epsilon^{-2},$
\begin{equation}\label{kNsum}
\sum_{i=1}^{\infty} k(N_i)^{-A} \ll \sum_{l=1}^{\infty} l^{3\epsilon} (\log\tilde{N}_l)^{-A\epsilon} \ll \sum_{l=1}^{\infty} l^{-A \epsilon^2 + 3\epsilon} <\infty,
\end{equation}
because $\lim_{N\rightarrow \infty} \frac{k(N)}{(\log N)^{\epsilon}} = \infty$. By the first Borel--Cantelli lemma, and \eqref{interineq}, we have
\begin{equation}\label{kNE}
\lim_{i\rightarrow \infty} \frac{\hat{S}_{N_i}^{k(N_i)}(f)(x)}{\E S_{N_i}(f_{N_i})}=1 \quad \text{ for almost every } x.
\end{equation}
Since
\begin{equation*}
\E S_N(f_N) \sim c N^\alpha k(N)^{1-\alpha},
\end{equation*}
with $c=\frac{1}{\alpha-1} \operatorname{Res}_{x^*}(f)B_d^{\alpha} \rho(x^*)^{\alpha}$, it follows that
\begin{equation}\label{kNcon}
\lim_{i\rightarrow \infty} \frac{\hat{S}_{N_i}^{k(N_i)}(f)(x)}{N_i^{\alpha} k(N_i)^{1-\alpha}} = c \quad \text{ for almost every } x.
\end{equation}

Define two auxiliary trimming sequences as
\begin{align*}
k_+(N) &= \left\lceil 2^{l^{-\epsilon}} k(N) \right\rceil \quad \text{if}\quad \tilde{N}_l \leq N < \tilde{N}_{l+1} \quad \text{ and }\\
k_-(N) &= \left\lfloor 2^{-l^{-\epsilon}} k(N) \right\rfloor \quad \text{if}\quad \tilde{N}_l \leq N < \tilde{N}_{l+1}.
\end{align*}
Since $\lim_{N\rightarrow \infty} \frac{k_{\pm}(N)}{k(N)} = 1$, Lemmas \ref{interTSAR} and \ref{interESTF}, as well as \eqref{kNsum}, also hold for $k_\pm$ instead of $k$, thus the first Borel--Cantelli lemma implies
\begin{equation}\label{kNpmcon}
\lim_{i\rightarrow \infty} \frac{\hat{S}_{N_i}^{k_{\pm}(N_i)}(f)(x)}{N_i^{\alpha} k(N_i)^{1-\alpha}} = c \quad \text{ for almost every } x.
\end{equation}

For $N \notin \{N_i\}_{i=1}^\infty$ with $\tilde{N}_l<N_i<N<N_{i+1}<\tilde{N}_{l+1},$ the construction of $\{N_i\}$ and $k_\pm$ implies
\begin{equation}\label{kN+ineq}
2^{-l^{-\epsilon}} k_+(N_i)-1\leq k(N_i)\leq k(N)\leq k(N_{i+1})\leq 2^{l^{-\epsilon}} k(N_i) \leq k_+(N_i),
\end{equation}
and
\begin{equation}\label{kN-ineq}
k_-(N_{i+1}) \leq k(N) \leq 2^{l^{-\epsilon}} k_-(N_{i+1})+2^{l^{-\epsilon}}.
\end{equation}

Let $x$ satisfy \eqref{kNcon} and \eqref{kNpmcon}. By the monotonicity of trimmed sums and applying \eqref{kN+ineq}, \eqref{kN-ineq}, we obtain
$$
\hat{S}_{N_i}^{k_+(N_i)}(f) \leq \hat{S}_N^{k(N)}(f) \leq \hat{S}_{N_{i+1}}^{k_-(N_{i+1})}(f).$$
Rewriting this inequality as
\begin{align*}
\frac{N_i^{\alpha}k_+(N_i)^{1-\alpha}}{N^{\alpha}k(N)^{1-\alpha}}\cdot
\frac{\hat{S}_{N_i}^{k_+(N_i)}(f)}{N_i^{\alpha}k_+(N_i)^{1-\alpha}}&\leq \frac{\hat{S}_N^{k(N)}(f)}{N^{\alpha}k(N)^{1-\alpha}}\\
&\leq
\frac{N_{i+1}^{\alpha}k_-(N_{i+1})^{1-\alpha}}{N^{\alpha}k(N)^{1-\alpha}}\cdot
\frac{\hat{S}_{N_{i+1}}^{k_-(N_{i+1})}(f)}{N_{i+1}^{\alpha}k_-(N_{i+1})^{1-\alpha}},
\end{align*}
and observing that the scaling factors converge to $1$ as $N \to \infty$, it follows from the squeeze theorem that
\begin{equation*}
\lim_{N\rightarrow\infty, N \not \in \{N_i\}} \frac{\hat{S}_N^{k(N)}(f)(x)}{N^{\alpha} k(N)^{1-\alpha}}=c.
\end{equation*}
Combining this with \eqref{kNcon}, this shows that
\begin{equation*}
\lim_{N\rightarrow\infty} \frac{\hat{S}_N^{k(N)}(f)(x)}{N^{\alpha} k(N)^{1-\alpha}}=c.
\end{equation*}
Finally, Lemma \ref{strongsklem} implies that also
\begin{equation*}
\lim_{N\rightarrow\infty} \frac{S_N^{k(N)}(f)(x)}{N^{\alpha} k(N)^{1-\alpha}}=c\quad \text{ for almost every } x.
\end{equation*}
\end{proof}

\section{Returns to small balls}\label{PoiLgT}

The main tool for analyzing distributional limits in the lightly trimmed case is the asymptotic Poisson nature of return times to small balls. This section establishes this property.

The Poisson limit theorem for return times has been investigated extensively in the literature. One of the earliest results in a dynamical systems context is due to Pitskel \cite{PitPois}, who proved it for mixing Markov chains. Subsequently, the result has been established under various mixing assumptions by many authors; for a comprehensive overview, we refer the reader to the survey \cite{Haydn13}, especially Section~5.1 for a discussion of the method of moments. More recently, slow recurrence together with exponential mixing of all orders has been employed in \cite{DFL22}; see also \cite{auerp}.

Specifically, \cite[Proposition~20]{auerp} implies that for $\mu$-almost every $x^* \in M$ and any finite collection of times $0<t_1<\cdots<t_J$, the finite-dimensional distributions converge weakly as $r \to 0$:
\begin{equation}\label{classicalPLTF}
\Bigl(S_{\lceil t_1 / \mu(B_r(x^*)) \rceil} (\one_{B_r(x^*)}), \ldots, S_{\lceil t_J / \mu(B_r(x^*)) \rceil} (\one_{B_r(x^*)})\Bigr) \;\Rightarrow\; \left(P(t_1), \ldots, P(t_J)\right),
\end{equation}
where each $P(t)$ is a Poisson random variable with mean $t$, and the vector has independent increments.

Moreover, a minor modification of the argument shows that the convergence in \eqref{classicalPLTF} also holds for points $x^*$ that are slowly recurrent.
If $\rho(x^*)>0,$ the measure of $B_r(x^*)$ satisfies $\mu(B_r(x^*))\sim B_d \rho(x^*) r^d$, where $B_d$ is the volume of the $d$-dimensional unit ball. Then \eqref{classicalPLTF} becomes
\begin{equation*}
\Bigl(S_{\lceil \frac{t_1}{B_d \rho(x^*)r^d}\rceil} (\one_{B_r(x^*)}),\ldots, S_{\lceil\frac{t_J}{B_d \rho(x^*)r^d} \rceil} (\one_{B_r(x^*)})\Bigr) \Rightarrow \left(P(t_1),\ldots, P(t_J)\right).
\end{equation*}

For the arguments that follow, we require a variant of this result in which the number of iterations is fixed while the radius of the balls varies. Although this version can be derived by the same methods, we include a proof for completeness.

\begin{proposition}\label{PLTprop}
Suppose the system $(M,\mathcal{B}(M),\mu,T)$ is exponentially mixing of all orders. If $x^* \in M$ is slowly recurrent and $\rho(x^*) > 0$, then for any $J\in \mathbb{N}$ and time points $0<t_1<t_2<\cdots<t_J$, the finite-dimensional distributions converge weakly as $N \to \infty$:
\begin{equation}\label{PCFinDim}
\Bigl(S_N(\chi_{N,t_1}), S_N(\chi_{N,t_2}), \ldots, S_N(\chi_{N,t_J}) \Bigr) \Rightarrow \left(P(t_1), P(t_2), \ldots, P(t_J) \right),
\end{equation}
where
$r_N(t)=\Bigl( \frac{t}{B_d \rho(x^*) N} \Bigr)^{\frac{1}{d}},$ $\chi_{N,t}=\one_{B_{r_N(t)}(x^*)},$
and $(P(t))_{t\geq 0}$ denotes  a standard Poisson process.
\end{proposition}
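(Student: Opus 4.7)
The plan is to prove Proposition~\ref{PLTprop} via the method of joint factorial moments, in the same spirit as the proofs in \cite{DFL22} and \cite{auerp} for the reciprocal setting in which the radius is fixed and the number of iterates varies. Since the balls are nested, $B_{r_N^{t_1}} \subset \cdots \subset B_{r_N^{t_J}}$, it is more natural to reformulate \eqref{PCFinDim} in terms of the disjoint annuli $A_i := B_{r_N^{t_i}} \setminus B_{r_N^{t_{i-1}}}$ (with the convention $B_{r_N^{t_0}} = \emptyset$). Writing $\tilde{S}_i^N := S_N(\one_{A_i})$, the continuity of $\rho$ gives $\mu(A_i) \sim (t_i - t_{i-1})/N$, and it suffices to show that $(\tilde{S}_1^N, \ldots, \tilde{S}_J^N)$ converges jointly to a vector of independent Poisson variables with means $t_i - t_{i-1}$. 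The convergence in \eqref{PCFinDim} then follows by partial summation, since the cumulative sums of independent Poisson increments yield exactly the law of a Poisson process sampled at the times $t_1<\cdots<t_J$.

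Fix $\mathbf{m} = (m_1, \ldots, m_J) \in \mathbb{N}^J$ and set $M := \sum_i m_i$. Because Poisson distributions are determined by their moments, it is enough to prove
\begin{equation*}
\mathbb{E}\!\left[\prod_{i=1}^J (\tilde{S}_i^N)_{m_i}\right] = \sum_{(n_{i,k})} \mu\!\left(\bigcap_{i,k} T^{-n_{i,k}} A_i\right) \longrightarrow \prod_{i=1}^J (t_i - t_{i-1})^{m_i}
\end{equation*}
as $N\to\infty$, where $(x)_m = x(x-1)\cdots(x-m+1)$ denotes the falling factorial and the sum runs over all injective tuples $(n_{i,k}) \in \{0,\ldots,N-1\}^M$ indexed by $1\leq i\leq J$, $1\leq k\leq m_i$. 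Mirroring the three-step structure of Lemma~\ref{truncvarlem}, I would split this sum according to whether the minimum gap between any two indices $n_{i,k}$ exceeds a threshold $D\log N$. For the long-range tuples, I would replace each $\one_{A_i}$ by a $C^\kappa$ smooth bump approximation and apply exponential mixing of order $M$ to obtain
\begin{equation*}
\mu\!\left(\bigcap_{i,k} T^{-n_{i,k}} A_i\right) = (1+o(1))\prod_{i=1}^J \mu(A_i)^{m_i},
\end{equation*}
which, combined with the combinatorial count $\sim N^M$ of such tuples and $\mu(A_i)\sim(t_i-t_{i-1})/N$, delivers precisely the claimed limit $\prod_i (t_i - t_{i-1})^{m_i}$. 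For the short-range tuples, where some pair of indices lies within $D\log N$ of each other, slow recurrence at $x^*$ bounds each offending pair intersection by $\mu(B_{r_N^{t_J}})\, |\log r_N^{t_J}|^{-A}$ for arbitrarily large $A$, and the combinatorial count of such tuples is $\ll N^{M-1}\log N$, yielding a contribution that is $o(1)$.

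The main obstacle is calibrating the $C^\kappa$-smoothing against exponential mixing across all $M$ factors simultaneously. Smoothing $\one_{A_i}$ at scale $\tilde\epsilon_i$ costs $\|h_i\|_{C^\kappa} \ll (r_N^{t_i})^{-\kappa}\tilde\epsilon_i^{-\kappa}$ while producing an $L^1$-error of order $(r_N^{t_i})^{d-1}\tilde\epsilon_i$, and the mixing error for $M$ smooth factors is then $\ll \prod_i \|h_i\|_{C^\kappa}^{m_i}\, e^{-\gamma\cdot(\text{min gap})}$. Choosing the $\tilde\epsilon_i$ as appropriately small powers of $e^{-\gamma\cdot(\text{min gap})/(2\kappa+1)}$, in the same spirit as the choice $\tilde\epsilon = e^{-\gamma|j-k|/(2\kappa+1)}$ in Lemma~\ref{truncvarlem}, controls the cumulative error provided $D$ is taken sufficiently large. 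A secondary, largely combinatorial, subtlety arises in the short-range analysis when several indices cluster together: this is handled by a standard cluster decomposition, where within each cluster slow recurrence contributes a $|\log r_N|^{-A}$ factor for each close pair, more than compensating for the entropy of such clusters. This is the same mechanism underlying the proof of Lemma~\ref{TSAR}, generalized from the single-pair case to higher-order clusters.
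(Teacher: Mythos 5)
Your overall strategy coincides with the paper's: method of (factorial/binomial) moments over the disjoint annuli, a split of the index tuples according to whether the minimal gap exceeds $C\log N$, smoothing plus exponential mixing of order $M$ for the well-separated tuples, and slow recurrence for the clustered ones. The long-range half of your argument is sound and matches the paper's.

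The short-range half has a genuine gap for $M\geq 3$. Your stated estimate is: each offending term is bounded by the pair intersection $\mu\bigl(B\cap T^{-j}B\bigr)\ll N^{-1}(\log N)^{-A}$, and there are $\ll N^{M-1}\log N$ such tuples. Multiplying these gives $N^{M-2}(\log N)^{1-A}$, which diverges once $M\geq 3$: bounding the remaining $M-2$ indicator factors by $1$ throws away too much. To recover the missing factor $N^{-(M-2)}$ you would need to factor the clustered intersection into a product of per-cluster measures, but the cluster blocks are products of indicators at nearby times, hence not $C^\kappa$, so exponential mixing does not apply to them directly; and slow recurrence yields only \emph{one} factor of $|\log r|^{-A}$ per cluster (from a single close pair), not one per pair, so the ``compensating entropy'' count in your final paragraph does not close the argument as described. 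The paper circumvents this with a different mechanism: it observes that the short-range sum $R_N$ is supported on the set $U_N$ of points whose orbit has two visits to $B_{r_N^{t_J}}$ within $C\log N$ of each other, uses slow recurrence to get $\mu(U_N)=o\bigl((\log N)^{-2\mathcal{M}}\bigr)$, and then applies Cauchy--Schwarz, $\E R_N\leq \mu(U_N)^{1/2}\|R_N\|_{L^2}$. The $L^2$ norm is controlled by a crude polylogarithmic bound, $\|R_N\|_{L^2}\ll(\log N)^{\mathcal{M}}$, proved via a skeleton decomposition: for each clustered tuple one extracts a well-separated skeleton $l_1<\dots<l_s$, for which exponential mixing of all orders gives $\mu\bigl(\bigcap_r T^{-l_r}B\bigr)\ll N^{-s}$, while the remaining indices each have only $O(\log N)$ admissible positions near the skeleton. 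You should incorporate this (or an equivalent) decoupling of ``small support'' from ``bounded moments''; without it the short-range contribution is not under control.
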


\begin{proof}
By the method of moments, it suffices to show that for all $J\geq 1$, $0=t_0<t_1<\dots<t_J$, and $(m_1,\dots,m_J) \in \mathbb{N}^J$,
\begin{equation}\label{PoiFDmC}
\lim_{N \to \infty} \mathbb{E}\left[ \prod_{j=1}^J \binom{S_{N,j} - S_{N,j-1}}{m_j} \right] = \prod_{j=1}^J \frac{(t_j - t_{j-1})^{m_j}}{m_j!},
\end{equation}
where $S_{N,j} =S_N(\chi_{N,t_j})$.

For $N \geq \max_j m_j$, we express the product in the left-hand side of \eqref{PoiFDmC} as
\begin{equation}\label{slbinomprod}
\prod_{j=1}^J \binom{S_{N,j} - S_{N,j-1}}{m_j} = \sum_{\mathbf{k}} \prod_{j=1}^J \prod_{i=1}^{m_j} \xi_N^{j,i},
\end{equation}
where, with the convention $t_0=0$ and $\chi_{N,t_0}=0$,
\begin{equation*}
\xi_N^{j,i}=\left(\chi_{N,t_j}-\chi_{N,t_{j-1}}\right)\circ T^{k_i^j}=\one_{
B_{r_N(t_j)}(x^*)\setminus B_{r_N(t_{j-1})}(x^*)}\circ T^{k_i^j}.
\end{equation*}
The sum runs over all tuples $\mathbf{k} = (k_1^1, \dots, k_{m_1}^1, \dots, k_1^J, \dots, k_{m_J}^J)$ whose entries satisfy $0 \leq k_1^j < \dots < k_{m_j}^j \leq N-1$ for each $j=1,\dots,J$. Note that
$\xi_N^{j,i}\cdot\xi_N^{j',i'}=0$ whenever $j\not=j'$ and $k_i^j=k_{i'}^{j'},$ and therefore we can restrict the sum to tuples with
\begin{equation*}
k^j_i \neq k^{j'}_{i'} \;\text{ whenever }\; (j,i)\neq (j',i').
\end{equation*}
For a large constant $C,$ let
\begin{equation*}
\Delta_N=\Delta_{N,(t_j),(m_j)}:=\left\{\mathbf{k}=(k_{i,j})_{\substack{j=1,\ldots,J\\i=1,\ldots,m_j}} \;\left|\;
\begin{aligned}
&0\leq k^j_{1} < \dots < k^j_{m_j} \leq N-1\text{ for } j=1,\ldots,J\\
&k^j_i \neq k^{j'}_{i'} \;\text{ whenever }\; (j,i)\neq (j',i')
\end{aligned}
\right\}\right. \\
\end{equation*}
and
\begin{equation*}
\Delta'_N=\Delta'_{N,(t_j),(m_j)}:=\left\{\mathbf{k}\in \Delta_N \;\left|\;\min_{\substack{j=1,\ldots,J, \; i=1,\ldots, m_j\\ j'=1,\ldots,J, \; i'=1,\ldots, m_{j'},\; (j,i)\not=(j',i')}} |k^j_i-k^{j'}_{i'}|\leq C \log N\right\}\right. .
\end{equation*}

Note that $ \prod_{j=1}^J \prod_{i=1}^{m_j} \xi_N^{j,i} = 0$ if $k^j_i = k^{j'}_{i'}$ for some $(j,i)$ and $(j',i')$ with $j \neq j'$. We then decompose the sum in \eqref{slbinomprod} into two components
\begin{equation*}
\prod_{j=1}^J {{S_{N,j} - S_{N,j-1}} \choose m_j} = M_N+R_N,
\end{equation*}
where
\begin{equation}\label{DecSNj}
M_N=M_{N,(t_j),(m_j)}= \sum_{\mathbf{k} \in \Delta_N \setminus \Delta_N'} \prod_{j=1}^J \prod_{i=1}^{m_j} \xi_N^{j,i}, \;\;
R_N=R_{N,(t_j),(m_j)}=\sum_{\mathbf{k} \in \Delta_N'} \prod_{j=1}^J \prod_{i=1}^{m_j} \xi_N^{j,i}.
\end{equation}
We aim to prove that
\begin{equation}\label{mlrlcon}
\mathbb{E}M_N \rightarrow \prod_{j=1}^{J} \frac{(t_j-t_{j-1})^{m_j}}{m_j!} \;\;\text{ and }\;\; \mathbb{E} R_N \rightarrow 0 \;\;\;\as{N}.
\end{equation}
Suppose $\mathcal{M}=\sum_{j=1}^J m_j.$ Let us first establish the asymptotic behavior of $M_N$.
For $\mathbf{k} \in \Delta_N \setminus \Delta_N'$, the indices are well-separated. Analogously to Lemma \ref{MixEmB}, exponential mixing of all orders yields
\begin{align*}
\left|\mu\biggl(\bigcap_{j=1}^J \bigcap_{i=1}^{m_j} T^{-k_i^j} \left(B_{r_N(t_j)}(x^*) \setminus B_{r_N(t_{j-1})}(x^*)\right)\biggr)\right.&\left.-\prod_{j=1}^J \mu \left(B_{r_N(t_j)}(x^*) \setminus B_{r_N(t_{j-1})}(x^*)\right)^{m_j}\right|\\
&=o(N^{-M}).
\end{align*}
Summing over $\mathbf{k}\in \Delta_N\setminus \Delta'_N$, and noting that
\begin{equation*}
\mu \left(B_{r_N(t_j)}(x^*) \setminus B_{r_N(t_{j-1})}(x^*)\right) \sim \frac{t_j-t_{j-1}}{N},
\end{equation*}
and
$$\#\left(\Delta_N\setminus \Delta'_N\right) \sim \prod_{j=1}^J {N \choose {m_j}},$$
we obtain
\begin{equation}\label{EMN}
\mathbb{E}M_N \sim N^{-\sum_{j=1}^J m_j} \sum_{\mathbf{k}\in \Delta_N\setminus \Delta'_N} \prod_{j=1}^J (t_j-t_{j-1})^{m_j} \sim \prod_{j=1}^{J} \frac{(t_j-t_{j-1})^{m_j}}{m_j!}.
\end{equation}

Now we turn our attention to $R_N$. Let $A_N = B_{r_N(t_J)}(x^*)$. Note that $R_N$ is supported on the set
$$
U_N = \bigcup_{n=0}^{N-1} T^{-n} \left( A_N \cap \{ \varphi_{A_N} \leq C \log N\} \right),
$$
where $\varphi_{A_N}(x) = \min\{ k \geq 1 : T^k(x) \in A_N \}$ is the first return time. By the slow recurrence condition at $x^*$,
$$
\mu(U_N) = o\left( (\log N)^{-2\mathcal{M}} \right).
$$
Applying the Cauchy--Schwarz inequality,
$$
\E R_N\leq \mu(\supp (R_N))^{1/2} \| R_N \|_{L^2} \leq \mu(U_N)^{1/2} \| R_N \|_{L^2}.
$$
Thus, to prove $\E R_N \rightarrow 0$, it suffices to show $\|R_N\|_{L^2} \ll (\log N)^{\mathcal{M}}$.
We bound $\|R_N\|_{L^2}$ by noting that
$$R_N^2 \leq\sum_{0\leq k_1,\ldots,k_{2\mathcal{M}} \leq N-1} \prod_{i=1}^{2\mathcal{M}} \one_{A_N} \circ T^{k_i}= S_{N,J}^{2\mathcal{M}},$$
where $S_{N,J} = S_N(\one_{A_N})$ and
\begin{equation*}
S_{N,J}^{2\mathcal{M}}= \sum_{m=1}^{2\mathcal{M}}m! \stirling{2\mathcal{M}}{m} {S_{N,J} \choose m} \leq C_{\mathcal{M}} \sum_{m=1}^{2\mathcal{M}} {S_{N,J} \choose m},
\end{equation*}
where $C_{\mathcal{M}}=\max_{1\le q\le2M} q!\stirling{2\mathcal{M}}{q}$ and $\stirling{2\mathcal{M}}{q}$ are the Stirling numbers of the second kind.

Fix $m \geq 1,$ we adapt the previous notation for $t_J,m$ instead of $(t_j),(m_j).$ Then $\Delta_N$ denotes the set of $m$-tuples $0 \leq k_1 < \dots < k_m \leq N-1,$ and $\Delta_N'$ is the subset of $\Delta_N$ containing tuples where the minimum distance between any two distinct indices is at most $C \log N.$ Analogously to \eqref{DecSNj}, we decompose $\binom{S_{N,J}}{m} = M_{N,m} + R_{N,m}$, where $M_{N,m}$ and $R_{N,m}$ are sums of products of $\one_{A_N} \circ T^{k_i}$ over index tuples $\mathbf{k} \in \Delta_N \setminus \Delta_N'$ and $\mathbf{k} \in \Delta_N'$, respectively.

As in \eqref{EMN}, $\mathbb{E}M_{N,m}$ is bounded for each fixed $m$. It remains to show that
$$\mathbb{E}R_{N,m}\ll (\log N)^{2\mathcal{M}}$$
for all $m = 1, \ldots, 2\mathcal{M}$.

We partition $\Delta_N$ by defining
$$s(\mathbf{k}) = \max\left\{ s \geq 1 \;\middle|\; \exists\, 1 \leq i_1 < \dots < i_s \leq m\text{ such that } \min_{r=1,\dots,s-1} |k_{i_{r+1}} - k_{i_r}| > C \log N \right\}.$$
Then $\Delta_N = \bigcup_{s=1}^m \Delta_N^s$, where $\Delta_N^s = \{ \mathbf{k} : s(\mathbf{k}) = s \}$, and $\Delta_N' = \bigcup_{s=1}^{m-1} \Delta_N^s$.

For $s = 1, \dots, m-1$ and $0\leq l_1 < \dots < l_s \leq N-1$ with $\min_r |l_{r+1}-l_r| > C\log N$, define
$$
A_{l_1,\dots,l_s} = \bigcap_{r=1}^s T^{-l_r} A_N.
$$
By exponential mixing of all orders, $\mu(A_{l_1,\dots,l_s})\ll
\mu(A_N)^s+o(N^{-s}) \ll N^{-s}$. For $x\in A_{l_1,\dots,l_s},$ consider
\begin{equation*}
\mathfrak{K}_{l_1,\dots,l_s}(x)=\left\{\mathbf{k}=(k_1,\dots,k_m)\in \Delta_N^s\;\left|
\begin{aligned}
&\exists\, i_1,\dots,i_s\; \text{ such that } k_{i_r}=l_r, \; \forall r=1,\dots,s \\
&\text{and }\; x\in \bigcap_{i=1}^m T^{-k_i} A_N
\end{aligned} \right\} \right. .
\end{equation*}
Then $|\mathfrak{K}_{l_1,\dots,l_s}(x)| \ll (\log N)^m$, since the remaining indices must lie within $C \log N$ of some $l_r$. Furthermore, for any $\mathbf{k} \in \mathfrak{K}_{l_1,\dots,l_s}(x),$ the support of the corresponding product satisfies
$$
\supp\left(\prod_{i=1}^{m} \one_{A_N}\circ T^{k_i}\right) \subset A_{l_1,\dots,l_s}.
$$
Thus,
\begin{align*}
\sum_{\mathbf{k} \in \Delta_N^s} \E \left[ \prod_{i=1}^m \one_{A_N} \circ T^{k_i} \right] & \leq
\sum_{\substack{0\leq l_1<\ldots<l_s\leq N-1\\ \min_{r=1,\ldots,s-1} |l_{r+1}-l_r|>C\log N}}
\int_{A_{l_1,\ldots,l_s}} |\mathfrak{K}_{l_1,\ldots,l_s}(x)| \d\mu(x)\\
& \ll (\log N)^m \sum_{\substack{0\leq l_1<\ldots<l_s\leq N-1\\ \min_{r=1,\ldots,s-1} |l_{r+1}-l_r|>C\log N}} \mu(A_{l_1,\ldots,l_s})\\
&\ll (\log N)^m.
\end{align*}
Summing over $s = 1, \dots, m-1$ yields $\E R_{N,m} \ll (\log N)^m \leq (\log N)^{2\mathcal{M}},$ thereby completing the proof.
\end{proof}

To prepare for a trimmed version, we introduce an alternative representation of the limit in \eqref{PCFinDim} using Poisson point processes.

\begin{definition}
A random countable set $\Lambda \subset [0, \infty)$ is called a (standard) Poisson point process (PPP) if
\begin{itemize}
\item For any interval $[t_1,t_2) \subset [0, \infty)$, the number of points in $\Lambda \cap[t_1,t_2)$ follows a Poisson distribution with mean $(t_2-t_1)$. That is, for $k\in\mathbb{N},$
\begin{equation*}
\P\Bigl(\# \bigl(\Lambda \cap [t_1,t_2)\bigr) =k\Bigr) = \frac{(t_2-t_1)^k e^{-(t_2-t_1)}}{k!}
\end{equation*}
\item For any collection of disjoint intervals $[s_1, t_1),$ $[s_2, t_2),$ $\dots,$ $[s_n, t_n)\subset [0, \infty)$, the counts
  $$
 \#\left(\Lambda \cap [s_1, t_1)\right), \#\left(\Lambda \cap [s_2, t_2)\right), \dots, \#\left(\Lambda \cap [s_n, t_n)\right)$$
  are independent.
\end{itemize}
\end{definition}

Now the standard Poisson process $\mathcal{P}$ is given by
\begin{equation*}
\mathcal{P}=(P_t)_{t\geq 0} =\left(\Lambda \cap [0,t)\right)_{t\geq 0}.
\end{equation*}
For $K\geq 1$, the trimmed Poisson point process $\Lambda^K$ is defined as $\Lambda$ with the $K$ points closest to the origin removed, that is,
$$
\Lambda^K = \Lambda \setminus \argmin_{\{x_1,\dots,x_K\}\subset \Lambda,\,\,x_1<x_2<\dots<x_K} \sum_{j=1}^K x_j,
$$
with the convention $\Lambda^0=\Lambda$.

\begin{remark}\label{PPiid}
Given a PPP $\Lambda$, there are i.i.d.\ standard exponentially distributed random variables $E_1, E_2, \dots$ such that
\begin{equation*}
\Lambda=\{\xi_k\}_{k\geq 1},
\end{equation*}
where $\xi_k = \sum_{j=1}^k E_j$ denotes the $k$-th arrival time. Then the corresponding trimmed PPP is $\Lambda^K=\{\xi_k\}_{k\geq K+1}$.
\end{remark}

\begin{remark}
The trimmed Poisson point process defined above removes the $K$ points closest to the origin. This differs from the notion of trimmed point processes used elsewhere, for instance in \cite{BIM17}, where trimming refers to discarding the largest jumps of a L\'{e}vy process. The two definitions serve different purposes and are not equivalent.
\end{remark}

The trimmed version of Proposition \ref{PLTprop} is the following.

\begin{proposition}\label{trimPLTpropF}
Let $K\geq 0$. Under the same assumptions as in Proposition \ref{PLTprop}, for any $0<t_1<\cdots<t_J$,
$$
\Bigl(\hat{S}_N^K(\chi_{N,t_1}), \ldots, \hat{S}_N^K(\chi_{N,t_J})\Bigr)
\Rightarrow \left(\#\bigl(\Lambda^K \cap [0,t_1)\bigr), \ldots, \#\bigl(\Lambda^K \cap [0,t_J)\bigr)\right),
$$
where $\Lambda^K$ is a trimmed Poisson point process, and $r_N(t)$, $\chi_{N,t}$ are defined as in Proposition \ref{PLTprop}.
\end{proposition}

\begin{proof}
For $K=0$, the statement reduces to Proposition~\ref{PLTprop}. For $K\ge1$, the identities
$$
\hat{S}_N^K(\chi_{N,t}) = \max\Bigl(S_N(\chi_{N,t}) - K, 0\Bigr)
$$
and
$$
\#\bigl(\Lambda^K \cap [0,t)\bigr) = \max\Bigl(\#\bigl(\Lambda \cap [0,t)\bigr) - K, 0\Bigr),
$$
show that the conclusion follows immediately from Proposition \ref{PLTprop}.
\end{proof}

\section{Summation over Poisson point processes}\label{SumPP}

We investigate the asymptotic behavior of $\sum_{x\in \Lambda^K \cap [0,R)} \frac{1}{x^{\alpha}}$, for $\alpha> \frac{1}{2}$, as $R\rightarrow \infty,$ where $\Lambda^K$ denotes a trimmed Poisson point process.

The case $\alpha > 1$ can be addressed directly.
Using the tail estimate $\mathbb{P}(\#(\Lambda \cap [n,n+1]) > n^{\frac{\alpha-1}{2}}) \ll e^{-n^{\frac{\alpha-1}{2}}}$ for $n\geq1$ and the first Borel–-Cantelli lemma, we deduce that almost surely, for all sufficiently large $n$,
$
\#(\Lambda \cap [n,n+1]) \ll n^{\frac{\alpha-1}{2}}.
$
It follows that
$$\sum_{x\in \Lambda^K\cap[1,\infty)}\frac{1}{x^{\alpha}}\ll
\sum_{n\ge1} n^{-\alpha}\#(\Lambda\cap[n,n+1])\ll\sum_n n^{-\frac{\alpha+1}{2}}<\infty.$$
Moreover, since $\#(\Lambda\cap[0,1))$ is almost surely finite and contains no point at $0$,
$$
\sum_{x\in \Lambda^K\cap[0,1)}\frac1{x^\alpha}<\infty
\quad\text{ almost surely}.
$$
Hence
\begin{equation}\label{PPPcon>1}
\sum_{x\in \Lambda^K \cap [0,R)} \frac{1}{x^{\alpha}} \rightarrow \sum_{x\in \Lambda^K} \frac{1}{x^{\alpha}} \quad \as{R}, \; \text{ almost surely}.
\end{equation}
It remains to study the case $\alpha\in (\frac{1}{2},1]$.

\begin{proposition}\label{PPPconprop}
Let $K \geq 0$ and $\alpha>\frac{1}{2}$. Then there is a
non-degenerate random variable $Y$ such that
$$\sum_{x\in\Lambda^K\cap[0,R)}\frac1{x^\alpha}-c_R\Rightarrow Y
\qquad\text{as }R\to\infty,$$
where
$$
c_R=\begin{cases}
\displaystyle \frac{R^{1-\alpha}-1}{1-\alpha},
& \frac12<\alpha<1,\\[1.2ex]
\log R,
& \alpha=1,\\[0.8ex]
0,
& \alpha>1.
\end{cases}
$$
Moreover, the $p$-th absolute moment of $Y$ for any positive integer $p$ satisfies
\begin{equation*}
\E(|Y|^p)
\begin{cases}
<\infty & \text{if } p<\frac{K+1}{\alpha},\\
=\infty & \text{if } p\geq \frac{K+1}{\alpha}.
\end{cases}
\end{equation*}
\end{proposition}

\begin{proof}
For $\alpha>1$, we have already shown in \eqref{PPPcon>1} that
\begin{equation*}
\sum_{x\in \Lambda^K \cap [0,R)} \frac{1}{x^{\alpha}}
\rightarrow
\sum_{x\in \Lambda^K} \frac{1}{x^{\alpha}}
\quad \as{R},
\; \text{almost surely}.
\end{equation*}
Therefore, the desired convergence in distribution holds with $c_R=0$. The finiteness of moments, however, depends on the behavior near the origin, which will be treated together with the case $\alpha\in(1/2,1]$. Thus, in the main part of the proof, we focus on constructing the centering constants and establishing convergence for $\alpha\in(1/2,1]$.

(i) To establish weak convergence, we first
prove convergence for a suitable sequence of centering constants $c_R>0$ and a distribution $\tilde{Y}$, without yet claiming non-degeneracy, such that
\begin{equation}\label{poissigmacon}
\Sigma_R - c_R \rightarrow \tilde{Y}\quad \as{R}, \text{ in } L^p \text{ for all } 1\leq p < \infty,
\end{equation}
where $\Sigma_R=\sum_{x\in \Lambda^K \cap [1,R)} \frac{1}{x^{\alpha}}$.
In Step (iv), we will show that $c_R$ may be replaced by the explicit centering constants stated in the proposition.

Once \eqref{poissigmacon} has been established, the conclusion of the proposition follows with
\begin{equation*}
Y= \tilde{Y}+\sum_{x\in \Lambda^K \cap [0,1)} \frac{1}{x^{\alpha}}.
\end{equation*}
We separate the contribution from $[0,1)$ because it is independent of $R$. Thus it does not affect the convergence as $R\to\infty$, although it remains part of the limiting random variable and determines the range of $p$ for which $Y\in L^p$.

(ii) For the moment, take $c_R=\E \Sigma_R$. For every even $k\geq 2$ we will show
\begin{equation}\label{PPPtrimlemvar1}
\E\left( \left|\Sigma_{R,R'} - c_{R,R'}\right|^k \right) \rightarrow 0 \;\;\;\as{R>R'},
\end{equation}
where $ \Sigma_{R,R'} =\Sigma_R - \Sigma_{R'}$ and $c_{R,R'}=c_R-c_{R'}$. This convergence will imply that the sequence $\{\Sigma_R-c_R\}$ is Cauchy in $L^k$, and hence converges in $L^k$ to some $\tilde{Y}$. Convergence in $L^p$ for every $p\geq 1$  then follows immediately from  H\"{o}lder's inequality. Once convergence has been established, the limit $\tilde{Y}$ does not depend on $p$ and \eqref{poissigmacon} follows.

Let $\Sigma^*_{R,R'} = \sum_{x\in \Lambda \cap [R',R)} x^{-\alpha}$, for $R>R'>1$ be the untrimmed sum, and $c^*_{R,R'}=\E \Sigma^*_{R,R'}$. If $K=0$, then $\Sigma_{R,R'}=\Sigma^*_{R,R'}.$ If $K\ge1,$ we observe that
\begin{align*}
\P\left(\Sigma_{R,R'} \not = \Sigma^*_{R,R'}\right) \leq \P\left( \#\{\Lambda\cap [0,R')\} <K\right) \ll (R')^K e^{-R'},
\end{align*}
and consequently,
\begin{align*}
&\left|\left\|\Sigma_{R,R'}-c_{R,R'}\right\|_{L^k}-\left\|\Sigma^*_{R,R'}-c^*_{R,R'}\right\|_{L^k}
\right|\le\left\|\bigl(\Sigma_{R,R'}-c_{R,R'}\bigr)-\bigl(\Sigma^*_{R,R'}-c^*_{R,R'}\bigr)
\right\|_{L^k}\\
&\le 2\left\|\Sigma_{R,R'}-\Sigma^*_{R,R'}\right\|_{L^k}\leq 2\sup |\Sigma_{R,R'}-\Sigma^*_{R,R'}|\cdot \P\left(\Sigma_{R,R'} \not=\Sigma^*_{R,R'}\right)^{\frac{1}{k}}\\
&\ll K(R')^{-\alpha+\frac{K}{k}}e^{-\frac{R'}{k}}.
\end{align*}
Therefore, it suffices to prove \eqref{PPPtrimlemvar1} with $\Sigma^*_{R,R'}$ instead of $\Sigma_{R,R'}$.

Furthermore, it is enough to verify \eqref{PPPtrimlemvar1} for integer values $R=n$, $R'=n'$, as the general case follows by standard moment comparison arguments.

(iii)
Let $N= |\Lambda \cap [n',n)|$ denote the number of points in $[n',n)$, where $N\sim \text{Poisson}(n-n')$. Given $N=k$, the ordered arrival times $\xi_1, \xi_2, \dots, \xi_k$ have the same distribution as the order statistics of $k$ independent random variables uniformly distributed on $[n',n)$. Hence $\Sigma_{n,n'}^*=\sum_{i=1}^k \xi_i^{-\alpha}$ and
\begin{equation*}
c^*_{n,n'}=\E \Sigma^*_{n,n'}=\E \left[\E \left[\Sigma^*_{n,n'}\mid N\right]\right]=\int_{n'}^{n} x^{-\alpha} \d x=
\begin{cases}
\dfrac{n^{1-\alpha} - (n')^{1-\alpha}}{1 - \alpha}, & \alpha \neq 1;\\
\log n-\log n', & \alpha = 1.
\end{cases}
\end{equation*}

The moment generating function of $\bar{Y}=\Sigma^*_{n,n'}-c^*_{n,n'}$ is defined as
\begin{equation*}
M_{\bar{Y}}(t) = \mathbb{E}\Bigl[e^{t\bigl(\Sigma^*_{n,n'}-c^*_{n,n'}\bigr)}\Bigr]=e^{-t c^*_{n,n'}}\mathbb{E}\Bigl[e^{t \Sigma^*_{n,n'}}\Bigr].
\end{equation*}
We first compute $\mathbb{E}[e^{t \Sigma^*_{n,n'}}]$ using the law of total expectation
\begin{equation*}
\mathbb{E}\Bigl[e^{t \Sigma^*_{n,n'}}\Bigr] = \mathbb{E}\left[\mathbb{E}\Bigl[e^{t \Sigma^*_{n,n'}}\Big| N\Bigr]\right].
\end{equation*}
As above we obtain
\begin{equation*}
\mathbb{E}\left[e^{t \Sigma^*_{n,n'}} \Big| N=k\right] = \mathbb{E}\biggl[\prod_{i=1}^k e^{t \xi_i^{-\alpha}}\biggr] =
\left(\mathbb{E}e^{t \xi_1^{-\alpha}} \right)^k.
\end{equation*}
Since $\xi_1\sim \text{Uniform}[n', n)$, we have
\begin{equation*}
M(t):=\mathbb{E}[e^{t \xi_1^{-\alpha}}]=\frac{1}{n-n'} \int_{n'}^{n} e^{t u^{-\alpha}} \d u.
\end{equation*}
Thus,
\begin{equation*}
\mathbb{E}\left[e^{t \Sigma_{n,n'}^*}\right]=\mathbb{E}\left[M(t)^N\right].
\end{equation*}
Since $N\sim \text{Poisson}(n-n')$, its probability generating function is $\mathbb{E}[z^N] = e^{(n-n')(z-1)}$. Substituting $z= M(t)$, we obtain
\begin{equation*}
\mathbb{E}\left[e^{t \Sigma^*_{n,n'}}\right]=e^{(n-n')(M(t)-1)}.
\end{equation*}
Therefore,
\begin{equation*}
M_{\bar{Y}}(t)= e^{-t c^*_{n,n'}} e^{(n-n')(M(t)-1)}.
\end{equation*}
Substituting $c^*_{n,n'}=\int_{n'}^{n} x^{-\alpha} \d x,$ we get
\begin{equation*}
M_{\bar{Y}}(t)=\exp\left(\int_{n'}^{n} \left( e^{t u^{-\alpha}}-t u^{-\alpha}-1\right)\d u\right).
\end{equation*}
It follows that the cumulant generating function is given by
\begin{equation*}
K_{\bar{Y}}(t) =\log M_{\bar{Y}}(t)= \int_{n'}^{n} \left( e^{t u^{-\alpha}}-t u^{-\alpha}-1 \right)\d u.
\end{equation*}
The $k$-th cumulant is $\kappa_k = K_{\bar{Y}}^{(k)}(0)$. Noting that $K_{\bar{Y}}(0) = 0$, we further observe that $\mathbb{E}\bar{Y}= K_{\bar{Y}}'(0) = 0$, which is consistent with the definition of centering. For $k \geq 2$, define $g(t, u) = e^{t u^{-\alpha}}-t u^{-\alpha}-1$. Then
\begin{equation*}
\frac{\partial^k}{\partial t^k} g(t, u) = (u^{-\alpha})^k e^{t u^{-\alpha}}.
\end{equation*}
Therefore
\begin{equation*}
K_{\bar{Y}}^{(k)}(t) = \int_{n'}^{n} u^{-k\alpha} e^{t u^{-\alpha}} \d u, \quad \text{and} \quad K_{\bar{Y}}^{(k)}(0) = \int_{n'}^{n} u^{-k\alpha} \d u.
\end{equation*}
This yields
\begin{equation*}
\kappa_k = \begin{cases}
0 & \text{if } k = 1, \\
\frac{n^{1-k\alpha} - (n')^{1-k\alpha}}{1 - k\alpha} &\text{if } k \geq 2,\, k\alpha \neq 1, \\
\log n -\log n', &\text{if } k \geq 2,\, k\alpha = 1.
\end{cases}
\end{equation*}
Higher-order moments of $\left(\Sigma^*_{n,n'}-c^*_{n,n'}\right)$ can be expressed in terms of cumulants $\kappa_n$
as follows:
\begin{equation*}
\E\left[\left(\Sigma^*_{n,n'}-c^*_{n,n'}\right)^k\right]= \sum_{\ell=1}^k B_{k,\ell}(\kappa_1, \kappa_2, \dots, \kappa_{k-\ell+1}),
\end{equation*}
where $B_{k,\ell}(x_1, \dots, x_{k-\ell+1})$ denotes the partial Bell polynomial
\begin{equation*}
B_{k,\ell}(x_1,\ldots,x_{k-\ell+1})
=\sum_{\substack{
r_1+r_2+\cdots+r_{k-\ell+1}=\ell\\
r_1+2r_2+\cdots+(k-\ell+1)r_{k-\ell+1}=k
}}
\frac{k!}{r_1!\,r_2!\cdots r_{k-\ell+1}!}
\prod_{j=1}^{k-\ell+1}\left(\frac{x_j}{j!}\right)^{r_j}.
\end{equation*}
Given that $ \kappa_1=0 $, all terms with $ r_1 \geq 1 $ vanish, and the summation is therefore restricted to the case $ r_1 = 0 $. This leads to
\begin{equation*}
\E\left[\left(\Sigma^*_{n,n'}-c^*_{n,n'}\right)^k\right]= \sum_{\substack{r_2, r_3, \dots \geq 0 \\ 2r_2 + 3r_3 + \cdots = k}} \frac{k!}{r_2!\,r_3! \cdots} \prod_{j \geq 2} \left( \frac{\kappa_j}{j!} \right)^{r_j}.
\end{equation*}
If $k$ is even, the dominant term corresponds to the case where $r_2=\frac{k}{2}$ and $ r_j = 0 $ for all $ j \neq 2 $, which is expressed as
\begin{equation*}
\frac{k!}{(\frac{k}{2})!\,2^{\frac{k}{2}}} \kappa_2^{\frac{k}{2}}\ll \left((n')^{1-2\alpha}-n^{1-2\alpha}\right)^{\frac{k}{2}}\to 0
\end{equation*}
as $n>n'\to\infty$ since $\alpha>\frac{1}{2}.$

(iv) We now identify the centering constants explicitly. In the case  $\frac12<\alpha\le1$, recall that the centering term used above is $c_R=\mathbb E\Sigma_R$. Since $\Lambda^K$ is obtained from $\Lambda$ by removing its first $K$ points $\xi_1,\ldots,\xi_K$, we have
$$\mathbb E\Sigma_R^*-c_R=\mathbb E\biggl(\sum_{j=1}^K\xi_j^{-\alpha}\mathbf 1_{{1\le\xi_j<R}}\biggr)
\to\mathbb E\biggl(\sum_{j=1}^K\xi_j^{-\alpha}
\mathbf 1_{{\xi_j\ge1}}\biggr)<\infty,
$$
where, similarly to the above, $\Sigma_R^*=\sum_{x\in \Lambda \cap[1,R)} x^{-\alpha}$ denotes the untrimmed sum.
Thus, $c_R$ is, up to an asymptotically constant additive correction that can be absorbed into the limiting random variable, equal to the expectation of the untrimmed sum $\Sigma_R^*$. Choosing this normalization, we may take
$$
c_R=\mathbb E\Sigma_R^*
=\int_1^R x^{-\alpha}\d x
=
\begin{cases}
\displaystyle\frac{R^{1-\alpha}-1}{1-\alpha},
& \frac12<\alpha<1,\\[1.2ex]
\log R,
& \alpha=1.
\end{cases}$$

(v)
To determine the moments of the limiting random variable, recall from the
preceding construction that
\begin{equation*}
Y=\tilde Y+\sum_{x\in\Lambda^K\cap[0,1)}x^{-\alpha}.
\end{equation*}
Since $\tilde Y$ possesses finite moments of all orders, adding $\tilde Y$ does not affect the finiteness of the $p$-th moment. Hence the finiteness or divergence of $\E(|Y|^p)$ is determined by the near-origin term
\begin{equation*}
\sum_{x\in\Lambda^K\cap[0,1)}x^{-\alpha}.
\end{equation*}
Set $p_0 = \frac{K+1}{\alpha}$. It remains to show that this sum lies in $L^p$ for $p<p_0$ but not for $p=p_0$.

Condition on $\xi_{K+1}\in[\frac1j,\frac1{j-1})$. By the law of total expectation,
\begin{equation}\label{poissumfin}
\begin{aligned}
&\mathbb{E}\Bigl[\Bigl(\sum_{x\in\Lambda^K\cap[0,1)} x^{-\alpha}\Bigr)^p\Bigr]\\
= &\sum_{j=2}^{\infty}\mathbb{E}\Bigl[\Bigl(\sum_{x\in\Lambda^K\cap[0,1)} x^{-\alpha}\Bigr)^p
\Big|\xi_{K+1}\in\bigl[\tfrac1j,\tfrac1{j-1}\bigr)\Bigr]
\cdot \mathbb{P}\bigl(\xi_{K+1}\in\bigl[\tfrac1j,\tfrac1{j-1}\bigr)\bigr).
\end{aligned}
\end{equation}
On this event, $x\ge 1/j$ for all $x\in\Lambda^K\cap[0,1)$, so $\sum x^{-\alpha}\le N j^\alpha$ with $N=\#(\Lambda^K\cap[0,1))$. Conditioned on $\xi_{K+1}$, the points in $(\xi_{K+1},1]$ form a Poisson process of mean $1-\xi_{K+1}$, so $N=1+R$ where $R$ is Poisson with mean $1-\xi_{K+1}$. Its $p$-th moment is bounded uniformly over $\xi_{K+1}\in[\tfrac1j,\tfrac1{j-1})$, yielding
\begin{equation}\label{CExi}
\mathbb{E}\Bigl[\Bigl(\sum_{x\in\Lambda^K\cap[0,1)}x^{-\alpha}\Bigr)^p \Big|\,\xi_{K+1}\in[\tfrac1j,\tfrac1{j-1})\Bigr]
\ll j^{\alpha p}.
\end{equation}
For the probability factor, $\xi_{K+1}\in[\frac1j,\frac1{j-1})$ means that there are at least $K+1$ points in $\bigl[0,\frac{1}{j-1}\bigr)$ but at most $K$ points in $\bigl[0,\frac{1}{j}\bigr)$. Given $\#(\Lambda\cap[0,\frac{1}{j}))=i$, one needs at least $K-i+1$ points in $[\frac{1}{j},\frac{1}{j-1})$, with corresponding probabilities
\begin{align*}
\P\left(\#\left(\Lambda\cap\left[0,\frac{1}{j}\right)\right)=i\right)&\asymp j^{-i},\\
\P\left(\#\left(\Lambda\cap\left[\frac{1}{j},\frac{1}{j-1}\right)\right)\geq K-i+1\right) &\asymp j^{-2(K-i+1)}.
\end{align*}
Summing over $i=0,\dots,K$ gives
\begin{equation}\label{PRxi}
\mathbb{P}\biggl(\xi_{K+1}\in\Bigl[\frac{1}{j},\frac{1}{j-1}\Bigr)\biggr)\asymp j^{-K-2}.
\end{equation}
Combined with \eqref{CExi}, the $j$-th term of the sum in \eqref{poissumfin} is $O(j^{\alpha p-K-2})$, so the series converges precisely when $p<p_0$.

For the lower bound of the $p_0$-th moment: if $\xi_{K+1}\in[\frac1j,\frac1{j-1})$, then we trivially estimate $\sum x^{-\alpha} \ge \xi_{K+1}^{-\alpha} \gg j^{\alpha}$, hence
\begin{equation*}
\mathbb{E}\Bigl[\Bigl(\sum_{x\in\Lambda^K\cap[0,1)}x^{-\alpha}\Bigr)^{p_0} \Big|\,\xi_{K+1}\in[\tfrac1j,\tfrac1{j-1})\Bigr]
\gg j^{\alpha p_0}.
\end{equation*}
Combining this estimate with \eqref{poissumfin} and \eqref{PRxi} yields
$$
\mathbb{E}\Bigl[\Bigl(\sum_{x\in\Lambda^K\cap[0,1)}x^{-\alpha}\Bigr)^{p_0}\Bigr]
\gg\sum_{j=2}^\infty j^{\alpha p_0-K-2}=\infty,
$$
which shows the sum fails to be in $L^{p_0}$.

\end{proof}

\section{Lightly trimmed distributional limits}\label{LgTrDL}

In this section, we assume that the system $(M, \mathcal{B}(M), \mu, T)$ is exponentially mixing of all orders, and that $\Ord_{x^*}(f) = \beta >\frac{d}{2}$. We will show a lightly trimmed distributional law, which establishes Theorem \ref{lighttrimthm}. As noted in the previous sections, this is again simpler for $\hat{S}_N^K$, where we simply remove the $K$ closest points to $x^*$. However, unlike our proof for the SLLN, the argument in Lemma \ref{strongsklem} is not quite enough to yield a limit theorem for $S_N^K$, and a quantitative estimate is required. Recall that by definition $\hat{S}_N^K(f)\geq S_N^K(f)$.

\begin{lemma}\label{nearlightlem}
For any fixed integer $K\geq 0$,
\begin{equation*}
\lim_{N\rightarrow \infty} \mu\left( \hat{S}_N^K(f)-S_N^K(f)>\tilde\epsilon N^{\alpha}\right)=0 \quad \forall \tilde\epsilon>0.
\end{equation*}
\end{lemma}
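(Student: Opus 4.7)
The plan is to revisit the derivation preceding \eqref{hatSineq} in Section~\ref{nearsec} and sharpen it at the last step. The bound \eqref{hatSineq} passes through $S_N^K(f)$, which is too large for the purposes of this lemma: for instance, when $\alpha<1$ one has $S_N^K(f)\asymp N\gg N^\alpha$, so no vanishing prefactor of moderate rate would bring the product below $\varepsilon N^\alpha$. Instead, I would stop one line earlier in the chain of inequalities of Section~\ref{nearsec} at
\[
\hat S_N^K(f)-S_N^K(f) \;=\; \sum_{i=1}^m(f(y_i)-f(\hat y_i)) \;\le\; \left(\frac{1+\epsilon(d(x^*,x_K))}{1-\epsilon(d(x^*,\hat x_K))}-1\right)\sum_{i=1}^m f(\hat y_i),
\]
and use the inclusion $\{\hat y_1,\ldots,\hat y_m\}\subseteq\{\hat x_1,\ldots,\hat x_K\}$ together with \eqref{DeCf} and the ordering $d(x^*,\hat x_1)\le d(x^*,\hat x_i)$ to reach
\[
\hat S_N^K(f)-S_N^K(f) \;\le\; \left(\frac{1+\epsilon(d(x^*,x_K))}{1-\epsilon(d(x^*,\hat x_K))}-1\right) K\, C_0\, d(x^*,\hat x_1)^{-\beta}.
\]

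From here the proof is a direct application of results already available. For the prefactor, Lemmas~\ref{hatxklem} and \ref{xklem} (applied with $k(N)\equiv K$) give $d(x^*,\hat x_K)\to 0$ and $d(x^*,x_K)\to 0$ almost surely; since $\epsilon(r)\to 0$ as $r\to 0$, the prefactor tends to zero almost surely. For the second factor I would invoke the Poisson limit theorem of Section~\ref{PoiLgT}: applying Proposition~\ref{PLTprop} with $J=1$ shows that $B_d\rho(x^*)N\,d(x^*,\hat x_1)^d$ converges in distribution to a standard exponential random variable, hence is tight. Equivalently, $d(x^*,\hat x_1)^{-\beta}/N^\alpha$ is bounded in probability.

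Combining these two observations, $(\hat S_N^K(f)-S_N^K(f))/N^\alpha$ is bounded by a product of a sequence converging to $0$ almost surely and a tight sequence. A routine Slutsky-type argument then yields $(\hat S_N^K(f)-S_N^K(f))/N^\alpha\to 0$ in probability, which is exactly the claim.

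The conceptual step most likely to be overlooked is that \eqref{hatSineq} is too crude in the regime $\alpha\le 1$ and that a strictly sharper bound is already implicit in the Section~\ref{nearsec} argument: one just has to stop before bounding $\sum_{i=1}^m f(\hat y_i)$ by $S_N^K(f)$. Once this free improvement is in place, the Poisson limit theorem pins down the correct scale $N^\alpha$ with no further effort, and no quantitative control on the modulus $\epsilon(r)$ is needed.
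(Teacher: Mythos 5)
Your proposal is correct and takes essentially the same route as the paper: both stop the Section~\ref{nearsec} computation before the final crude step, bounding $\hat S_N^K(f)-S_N^K(f)$ by a relative-error factor that tends to zero (via Lemmas~\ref{hatxklem} and~\ref{xklem} and $\epsilon(r)\to 0$) times a quantity that is $O_P(N^{\alpha})$. The only difference is in how that second factor is controlled — the paper keeps $d(x^*,\hat x_K)^{-\beta}$ and estimates the bad set directly, while you pass to $d(x^*,\hat x_1)^{-\beta}$ and invoke Proposition~\ref{PLTprop} for tightness of $N\,d(x^*,\hat x_1)^{d}$ — and your version of this step is, if anything, the more carefully justified one.
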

\begin{proof}
For $K=0$ we have $\hat{S}_N^0(f)=S_N^0(f)=S_N(f)$, and the claim is trivially satisfied. For the rest of the proof, we focus on the case $K\geq 1$.

With the notation from Section~\ref{nearsec}, each $y_i$ satisfies $d(x^*,y_i)>d(x^*,\hat{x}_K)$, since otherwise $y_i$ would be included among the $K$ closest points. Consequently,
\begin{align*}
f(y_i)&\leq \biggl(1+\epsilon\Bigl(d(x^*,y_i)\Bigr)\biggr)\Res_{x^*}(f) d(x^*,y_i)^{-\beta}\\
&<\biggl(1+\epsilon\Bigl(d(x^*,x_K)\Bigr)\biggr) \Res_{x^*}(f) d(x^*,\hat{x}_K)^{-\beta}.
\end{align*}
Similarly, for each $\hat{y}_i$, we have $d(x^*, \hat{y}_i) \leq d(x^*, \hat{x}_K)$, and thus
\begin{equation*}
f(\hat{y}_i) \geq \biggl(1-\epsilon\Bigl(d(x^*, \hat{x}_K)\Bigr)\biggr)\Res_{x^*}(f) d(x^*, \hat{x}_K)^{-\beta}.
\end{equation*}
The difference between the trimmed sums is
\begin{equation*}
\hat{S}_N^K(f) - S_N^K(f) = \sum_{i=1}^m \Bigl(f(y_i)-f(\hat{y}_i)\Bigr).
\end{equation*}
Substituting the bounds above,
\begin{equation}\label{SNDf}
\hat{S}_N^K(f)-S_N^K(f)\leq K \Res_{x^*}(f)\biggl(\epsilon\Bigl(d(x^*, x_K)\Bigr)+\epsilon\Bigl(d(x^*, \hat{x}_K)\Bigr)\biggr)d(x^*, \hat{x}_K)^{-\beta}.
\end{equation}
Fix $\eta > 0$ and define the set
$$G_N(\eta) = \left\{x : d(x^*,\hat{x}_K) \geq \eta N^{-1/d},\ \max\biggl(\epsilon\Bigl(d(x^*,x_K)\Bigr), \epsilon\Bigl(d(x^*,\hat{x}_K)\Bigr)\biggr) \leq \eta^{2\beta}\right\}.$$
On $G_N(\eta)$, \eqref{SNDf} implies
$$\left(\hat{S}_N^K(f)-S_N^K(f)\right)N^{-\alpha}\leq 2K\Res_{x^*}(f)\eta^{\beta}<\tilde\epsilon,$$
for $\eta$ sufficiently small. It remains to show that $G_N(\eta)$ has large probability. By Lemmas \ref{hatxklem} and \ref{xklem}, for fixed $K$,
$$
d(x^*,\hat{x}_K)\to 0,\quad d(x^*,x_K)\to 0\quad \text{for almost every } x.
$$
Since $\epsilon(r)\to0$ as $r\to0$, we get
$$
\mu\left(\max\biggl(\epsilon\Bigl(d(x^*,x_K)\Bigr), \epsilon\Bigl(d(x^*,\hat{x}_K)\Bigr)\biggr) \leq \eta^{2\beta}\right)\to 1.
$$
For the remaining condition, observe that
$$
\left\{d(x^*,\hat{x}_K)<\eta N^{-1/d}\right\}=\left\{S_N\Bigl(\one_{B_{\eta N^{-1/d}}(x^*)}\Bigr)\ge K\right\}.
$$
Since $S_N\Bigl(\one_{B_{\eta N^{-1/d}}(x^*)}\Bigr)\Rightarrow P(B_d\rho(x^*)\eta^d)$ by Proposition \ref{PLTprop}, where $P(t)$ denotes a Poisson random variable with mean $t$,
$$\lim_{N\to\infty}
\mu\left\{d(x^*,\hat{x}_K)<\eta N^{-1/d}\right\}=
\P\left\{P(B_d\rho(x^*)\eta^d)\ge K \right\} \ll \eta^{dK}.
$$
Combining the preceding estimates, for every $\tilde\epsilon>0$,
$$
\limsup_{N\to\infty}
\mu\left(
\hat{S}_N^K(f)-S_N^K(f)>\tilde\epsilon N^\alpha
\right)\ll\eta^{dK}.
$$
Letting $\eta\to0$ proves
$$
\lim_{N\to\infty}
\mu\left(
\hat{S}_N^K(f)-S_N^K(f)>\tilde\epsilon N^\alpha
\right)=0.
$$
\end{proof}

Before proving Theorem \ref{lighttrimthm}, we recall the metric that will be used to study weak convergence. Let $(\M,d_\M)$ be a compact metric space and let $\s{\tilde{\vartheta}}{n}$ be a sequence of Lipschitz functions on $\M$ dense in $C(\M)$. Set $\vartheta_n=\tilde{\vartheta}_n/\|\tilde{\vartheta}_n\|_{\mathrm{Lip}}$. For probability measures $\lambda$ and $\lambda'$ on $\M$, the metric
\begin{equation}\label{DK}
D_{\M}(\lambda,\lambda')=\sum_{n\geq 1} 2^{-n} \left|\int_{\M} \vartheta_n \d\lambda- \int_{\M} \vartheta_n \d\lambda'\right|,
\end{equation}
metrizes weak convergence. In the proof below, we take $\M=\overline{\mathbb R}=[-\infty,\infty]$ with the metric $d_{\overline{\mathbb R}}(r,s)=|\arctan r-\arctan s|,$ with the usual conventions at $\pm\infty$, and write $D_{\overline{\mathbb R}}$ for the corresponding metric on probability measures.

\begin{proof}[Proof of Theorem \ref{lighttrimthm}]
(i) We show the claims of the theorem for $\hat{S}_N^K(f)$ by proving that
\begin{equation}\label{hatSlightcon}
\frac{\hat{S}_N^K(f)-a_N}{N^{\alpha}} \Rightarrow Y \;\;\;\as{N}.
\end{equation}
Given Lemma \ref{nearlightlem}, the corresponding result for $S_N^K(f)$ follows with the same normalizing constants $a_N$.

(ii)
Set $r_N=(B_d \rho(x^*) N)^{-\frac{1}{d}}$ and let $R>2K$, which we regard as fixed for now. For $N \geq 1$ define
$$
\rho_N=\rho_{N,R}=R^{\frac{1}{d}} r_N,
$$
and let
$$
f_N=f_{N,R}= f \cdot \one_{B_{\rho_N}(x^*)}, \quad
\bar{f}_N=\bar{f}_{N,R} = f \cdot \left( 1 - \one_{B_{\rho_N}(x^*)} \right).
$$
Let $\chi_N = \chi_{N,R} = \one_{B_{\rho_N}(x^*)}$. Then
$$
\lim_{N\rightarrow \infty} \E\Bigl(S_N(\chi_N)\Bigr) = R,
$$
and the proof of Proposition \ref{PLTprop}, specifically \eqref{PoiFDmC}, shows that also
$$
\lim_{N\rightarrow \infty} \Var\Bigl(S_N(\chi_N)\Bigr)= R.
$$
It follows that
\begin{equation}\label{snksplit}
\begin{aligned}
&\limsup_{N\rightarrow \infty} \mu\left(\hat{S}_N^K(f) \neq \hat{S}_N^K(f_N) + S_N(\bar{f}_N) \right)
\leq \limsup_{N\rightarrow \infty} \mu\Bigl( S_N(\chi_N) \leq K-1 \Bigr)\\
&\leq \limsup_{N\rightarrow \infty}\mu\biggl(\left|S_N(\chi_N)-\E \Bigl(S_N(\chi_N)\Bigr)\right|\geq\E\Bigl(S_N(\chi_N)\Bigl)-(K-1)\biggr)\\
&\leq \lim_{N\rightarrow \infty}\frac{\Var\Bigl(S_N(\chi_N)\Bigr)}{\left(\E S_N(\chi_N)-(K-1)\right)^2} \leq \frac{4}{R}.
\end{aligned}
\end{equation}
In the remainder of the proof, we will analyze $\hat{S}_N^K(f_N)$ and $S_N(\bar{f}_N)$ separately.

Later on, in step (v), especially \eqref{LmVN}, we will see that the contribution of $S_N(\bar{f}_N)/N^\alpha$ is asymptotically deterministic, in the sense that its fluctuations around its expectation vanish. Therefore, for the main part of the proof, we focus on $\hat{S}_N^K(f_N)$.

(iii) Let $m\geq 1$ be a positive integer, which we regard as fixed for now, and partition the ball $B_{\rho_N}(x^*)$ into annular regions
\begin{equation*}
A_N^j = \left\{ x : \left( \frac{ R j}{m} \right)^{\frac{1}{d}} r_N \leq d(x, x^*) < \left( \frac{R (j+1)}{m} \right)^{\frac{1}{d}} r_N \right\}, \quad j=0,1,\ldots,m-1,
\end{equation*}
and introduce the trimmed counting process
\begin{equation*}
\mathcal{N}_{N,j}^K = \hat{S}_N^K \left(\one_{A_N^j}\right).
\end{equation*}
By Proposition \ref{trimPLTpropF}, the joint convergence
\begin{equation}\label{stablethmPois}
\left( \mathcal{N}_{N,j}^K \right)_{j=0,\dots,m-1} \Rightarrow \left( \# \left\{\Lambda^K \cap \left[\frac{Rj}{m},\frac{R(j+1)}{m}\right) \right\} \right)_{j=0,\dots,m-1} \;\;\;\as{N}
\end{equation}
holds. Set $c=\Res_{x^*}(f) B_d^{\alpha} \rho(x^*)^{\alpha}$ and define the bounds
\begin{equation*}
\theta_{N,m,R,-}^K = c \left(\frac{mN}{R}\right)^{\alpha} \sum_{j=0}^{m-1} \frac{1}{(j+1)^{\alpha}} \mathcal{N}_{N,j}^K,
\end{equation*}
and
\begin{equation*}
\theta_{N,m,R,+}^K = c \left(\frac{mN}{R}\right)^{\alpha} \sum_{j=1}^{m-1} \frac{1}{j^{\alpha}} \mathcal{N}_{N,j}^K.
\end{equation*}
From the joint convergence \eqref{stablethmPois}, we derive
\begin{equation}\label{thetaweakcon}
N^{-\alpha} \theta_{N,m,R,\pm}^K \Rightarrow \theta_{m,R,\pm}^K \quad \as{N},
\end{equation}
where
\begin{align*}
&\theta_{m,R,-}^K = c \left(\frac{m}{R}\right)^{\alpha} \sum_{j=0}^{m-1} \frac{1}{(j+1)^{\alpha}} \# \left\{\Lambda^K \cap \left[\frac{Rj}{m},\frac{R(j+1)}{m}\right) \right\} \; \text{ and}\\
&\theta_{m,R,+}^K = c \left(\frac{m}{R}\right)^{\alpha} \sum_{j=1}^{m-1} \frac{1}{j^{\alpha}} \# \left\{\Lambda^K \cap \left[\frac{Rj}{m},\frac{R(j+1)}{m}\right) \right\}.
\end{align*}
Since
\begin{equation*}
\Bigl(1-\epsilon(\rho_N)\Bigr)c\left(\frac{mN}{R(j+1)}\right)^\alpha<f(x)
<\Bigl(1+\epsilon(\rho_N)\Bigr)c\left(\frac{mN}{Rj}\right)^\alpha
\end{equation*}
for $x\in A_{N}^{j}$ and $1\leq j\leq m-1$, where $\epsilon$ comes from the local singularity estimate \eqref{Estf}, and
$$f(x)>\Bigl(1-\epsilon(\rho_N)\Bigr)c\left(\frac{mN}{R}\right)^\alpha\quad\text{for}\quad x\in A_{N}^{0},$$
we establish the lower bound
\begin{equation}\label{SNfNlob}
\hat{S}_N^K(f_N) \geq \Bigl(1-\epsilon(\rho_N)\Bigr) \theta_{N,m,R,-}^K,
\end{equation}
and the probabilistic upper bound
\begin{equation}\label{SNfNupb}
\begin{aligned}
&\limsup_{N\rightarrow\infty}\mu\left(\hat{S}_N^K(f_N) >\Bigl(1+\epsilon(\rho_N)\Bigr)\theta_{N,m,R,+}^K\right)\\
&\quad\leq\limsup_{N\rightarrow\infty}\mu\left(\left\{
\exists\,0\le k\le N-1: T^k(x) \in A_N^0\right\}\right)\\
& \quad\leq \lim_{N\rightarrow\infty} N \mu(A_N^0)\leq \frac{R}{m}.
\end{aligned}
\end{equation}

(iv)
Using Proposition \ref{PPPconprop}, there is a non-degenerate distribution $Y$ and constants $c_R$,  such that
\begin{equation*}
\sum_{x\in \Lambda^K \cap [0,R)} x^{-\alpha}- c_R \Rightarrow Y \;\;\; \as{R}.
\end{equation*}
Moreover, for later use, we explain why the discrete lower and upper sums approximate the Poisson sum. Fix a realization in the full-probability event on which $\Lambda^K\cap[0,R)$ is finite and does not contain $0$. If this set is empty, the desired convergence is immediate. Otherwise, write
\begin{equation*}
\Lambda^K\cap[0,R)=\{x_1,\dots,x_\ell\},\quad 0<x_1<\cdots<x_\ell<R.
\end{equation*}
For all sufficiently large $m$, the interval $[0,R/m)$ contains no points of $\Lambda^K$. Hence, for every $i=1,\dots,\ell$, there exists a unique $j_i\in\{1,\dots,m-1\}$ such that
\begin{equation*}
\frac{Rj_i}{m}\leq x_i<\frac{R(j_i+1)}{m}.
\end{equation*}
Therefore, for all sufficiently large $m$,
\begin{equation*}\frac{1}{c}\theta^K_{m,R,-}=
\sum_{i=1}^{\ell}\left(\frac{R(j_i+1)}{m}\right)^{-\alpha},
\quad\frac{1}{c}\theta^K_{m,R,+}=
\sum_{i=1}^{\ell}\left(\frac{Rj_i}{m}\right)^{-\alpha}.
\end{equation*}
Since the mesh size $\frac{R}{m}$ tends to zero, we have
$\frac{Rj_i}{m},\frac{R(j_i+1)}{m}\rightarrow x_i$
for every $i=1,\dots,\ell$. Consequently,
\begin{equation*}
\frac{1}{c}\theta^K_{m,R,-}
\rightarrow
\sum_{i=1}^{\ell}x_i^{-\alpha},
\qquad
\frac{1}{c}\theta^K_{m,R,+}
\rightarrow
\sum_{i=1}^{\ell}x_i^{-\alpha}.
\end{equation*}
Since the exceptional set of realizations has probability zero, we obtain
\begin{equation}\label{poisriesumcon}
\frac{1}{c}\theta^K_{m,R,\pm}
\rightarrow
\sum_{x\in\Lambda^K\cap[0,R)}x^{-\alpha}
\quad\text{almost surely as }m\to\infty .
\end{equation}
Consequently this convergence also holds in distribution.

(v)
The conclusion now follows from a diagonal argument. For $L\ge1$, first choose $\tilde R_L\ge 8L$ sufficiently large such that
\begin{align*}
D_{\overline{\mathbb R}}\left(\sum_{x\in \Lambda^K \cap [0,\tilde{R}_L)} x^{-\alpha}-c_{\tilde{R}_L}, Y\right) < \frac{1}{L},
\end{align*}
where $D_{\overline{\mathbb R}}$ is the metric defined in \eqref{DK}. By \eqref{poisriesumcon}, choose
$\tilde m_L\ge 2L\tilde R_L$ sufficiently large such that
\begin{equation*}
D_{\overline{\mathbb R}} \left(\frac{1}{c} \theta_{\tilde{m}_L,\tilde{R}_L,\pm}^K-c_{\tilde R_L},\sum_{x\in \Lambda^K \cap [0,\tilde{R}_L)}x^{-\alpha}-c_{\tilde R_L} \right) < \frac{1}{L}.
\end{equation*}
Using \eqref{snksplit}, \eqref{thetaweakcon}, and \eqref{SNfNupb}, we choose $\tilde{N}_L$ sufficiently large so that for all $N\ge \tilde N_L$, we have $\epsilon\left(\tilde{R}_L^{\frac{1}{d}} r_{N}\right) c_{\tilde{R}_L}<\frac{1}{L},$ where $\epsilon$ denotes the local singularity estimate \eqref{Estf}, and
\begin{equation}\label{tNLcond}
\begin{aligned}
D_{\overline{\mathbb R}}\left(\frac1cN^{-\alpha}\theta_{N,\tilde{m}_L,\tilde{R}_L,\pm}^K-c_{\tilde R_L}, \frac1c\theta_{\tilde{m}_L,\tilde{R}_L,\pm}^K-c_{\tilde R_L}\right) &< \frac{1}{L},\\
\mu\left(\hat{S}_N^K(f) \neq \hat{S}_N^K(f_{N,\tilde{R}_L}) + S_N(\bar{f}_{N,\tilde{R}_L})\right)&<\frac{1}{L},\\
\mu\left(\hat S_N^K(f_{N,\tilde R_L}) >\bigl(1+\epsilon(\tilde R_L^{1/d}r_N)\bigr)\theta_{N,\tilde m_L,\tilde R_L,+}^K\right) &<\frac1L.
\end{aligned}
\end{equation}
For $N\in [\tilde{N}_L, \tilde{N}_{L+1})$, set $R_N=\tilde{R}_L$ and $m_N=\tilde{m}_L.$ By the choice of $\tilde m_L,\tilde R_L$ and the first line of \eqref{tNLcond}, the triangle inequality gives
\begin{equation*}
D_{\overline{\mathbb R}}\left(\frac{1}{c} N^{-\alpha} \theta_{N,m_N,R_N,\pm}^K - c_{R_N}, Y\right)<\frac{3}{L}.
\end{equation*}
Hence
\begin{equation*}
\frac{1}{c} N^{-\alpha}\theta_{N,m_N,R_N,\pm}^K - c_{R_N}\Rightarrow Y \quad \as{N}.
\end{equation*}

Using Lemma \ref{sandwichlem} with
\begin{align*}
X_N&=\left(1-\epsilon(\rho_N)\right) \frac{1}{c} N^{-\alpha} \theta_{N,m_N,R_N,-}^K - c_{R_N},\\
Y_N&=\frac{1}{c} N^{-\alpha} \hat{S}_N^K(f_{N,R_N}) - c_{R_N},\; \text{ and}\\
Z_N&=\left(1+\epsilon(\rho_N)\right) \frac{1}{c} N^{-\alpha} \theta_{N,m_N,R_N,+}^K - c_{R_N},
\end{align*}
it follows from \eqref{SNfNlob} and \eqref{SNfNupb} that
\begin{equation}\label{mainSNcon}
\frac{1}{c} N^{-\alpha} \hat{S}_N^K(f_{N,R_N}) - c_{R_N} \Rightarrow Y \quad \as{N}.
\end{equation}
It remains to estimate the contribution of $\bar f_{N,R_N}$. Since now we have $N \rho_N^d\rightarrow \infty$, we can apply Proposition~\ref{momentprop} with the regularly adapted sequence $A_N = B_{\rho_N}(x^*)$ and $m_1=0,m_2=2$,
\begin{equation*}
\Var\Bigl(S_N(\bar{f}_{N,R_N})\Bigr) \ll  N \rho_N^{-2\beta + d} \ll N^{2\alpha} R_N^{1 - 2\alpha},
\end{equation*}
as $N\rightarrow \infty$. By Chebyshev's inequality, for every $\delta>0$ and $N\in[\tilde N_L,\tilde N_{L+1})$,
\begin{align*}
\mu\left(\left|\frac{S_N(\bar f_{N,R_N})-N\E(\bar f_{N,R_N})}{N^\alpha}\right|>\delta\right)
&\le\delta^{-2}N^{-2\alpha}\Var\left(S_N(\bar f_{N,R_N})\right)\\
&\ll R_N^{1-2\alpha}.
\end{align*}
Since $\alpha>\frac{1}{2}$ and $R_N\to\infty$, the right-hand side tends to $0$. Therefore
\begin{equation}\label{LmVN}
\frac{S_N(\bar f_{N,R_N})-N\E(\bar f_{N,R_N})}{N^\alpha}
\to0
\end{equation}
in probability.

Finally, combining \eqref{mainSNcon}, \eqref{LmVN}, and the second line of \eqref{tNLcond}, we obtain
\begin{equation*}
\frac{1}{c}N^{-\alpha}\left(\hat S_N^K(f)-cN^\alpha c_{R_N}-N\E(\bar f_{N,R_N})\right)
\Rightarrow Y\quad \as{N}.
\end{equation*}
Consequently,
$N^{-\alpha}\left(\hat S_N^K(f)-a_N\right)\Rightarrow cY.$
Since $c>0$, the random variable $cY$ is non-degenerate, and
relabelling it as $Y$ gives \eqref{hatSlightcon}.

For $\frac{1}{2}<\alpha\leq 1$, using the expression for $c_R$ obtained in Proposition \ref{PPPconprop}, we have
$$
a_N=cN^\alpha\int_1^{R_N}x^{-\alpha}\,\d x+N\E(\bar f_{N,R_N}).
$$
When $\alpha>1$, we may take $a_N=0$, since $c_{R_N}=0$ and
$N^{-\alpha}N\E(\bar f_{N,R_N})\ll R_N^{1-\alpha}\to0.$

Since the diagonal construction above can be carried out simultaneously for all $0\le K\le L$ at each stage $L$, the resulting sequence $R_N$, and hence $a_N$, does not depend on $K$. Its stated asymptotics follow by direct computation.

If $K=0$, then $\hat S_N^0(f)=S_N^0(f)=S_N(f).$ If $K\ge1$, passing from $\hat{S}_N^K(f)$ to $S_N^K(f)$ follows from Lemma~\ref{nearlightlem}.

\end{proof}

\section{Facts about distributional convergence}\label{metricsec}
We collect here two lemmas concerning the metric $D_{\overline{\mathbb R}}$ and weak convergence, as defined in Section \ref{LgTrDL}.

\begin{lemma}\label{sandwichlem}
Let $X_N,Y_N,Z_N,Y$ be real-valued random variables with
\begin{align*}
X_N \Rightarrow Y \; \text{ and } \; Z_N\Rightarrow Y \quad \as{N}.
\end{align*}
If $X_N,Y_N,Z_N$ are defined on a common probability space and
\begin{equation}\label{squeezeasm}
\P( Y_N < X_N) \rightarrow 0 \; \text{ and }\; \P(Y_N > Z_N) \rightarrow 0 \quad \as{N},
\end{equation}
then
\begin{equation*}
Y_N\Rightarrow Y \quad \as{N}.
\end{equation*}
\end{lemma}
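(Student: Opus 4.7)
\medskip

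The plan is to establish $Y_N\Rightarrow Y$ by verifying pointwise convergence of the distribution functions at every continuity point of $F_Y$, using a direct sandwich argument on the cumulative distribution functions. Since the random variables live on a common probability space, pointwise comparisons between $X_N$, $Y_N$, $Z_N$ make literal sense, so we can translate the hypothesis \eqref{squeezeasm} into event inclusions.

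First I would fix $t\in\R$ with $F_Y$ continuous at $t$ and set up the two key inclusions
\begin{equation*}
\{Y_N \leq t\}\cap\{X_N\leq Y_N\}\subseteq \{X_N\leq t\},\qquad \{Z_N\leq t\}\cap\{Y_N\leq Z_N\}\subseteq \{Y_N\leq t\}.
\end{equation*}
The first inclusion gives $\P(Y_N\leq t)\leq \P(X_N\leq t)+\P(Y_N<X_N)$, and the second gives $\P(Y_N\leq t)\geq \P(Z_N\leq t)-\P(Y_N>Z_N)$. Combining these yields the sandwich
\begin{equation*}
\P(Z_N\leq t)-\P(Y_N>Z_N)\leq \P(Y_N\leq t)\leq \P(X_N\leq t)+\P(Y_N<X_N).
\end{equation*}

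Next I would take the limit $N\to\infty$. By the hypothesis $X_N\Rightarrow Y$ and $Z_N\Rightarrow Y$, both $\P(X_N\leq t)$ and $\P(Z_N\leq t)$ converge to $F_Y(t)$ since $t$ is a continuity point of $F_Y$. The error terms $\P(Y_N<X_N)$ and $\P(Y_N>Z_N)$ vanish by assumption \eqref{squeezeasm}. Passing to the liminf and limsup in the sandwich therefore forces $\P(Y_N\leq t)\to F_Y(t)$ for every continuity point $t$, which is precisely the definition of $Y_N\Rightarrow Y$.

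This argument is elementary once one observes the correct event inclusions, so there is no real obstacle; the only point that requires a moment's care is getting the inequality directions right, in particular remembering that $\{Y_N<X_N\}$ has vanishing probability, so on its complement $X_N\leq Y_N$, from which one can propagate the comparison $\{Y_N\leq t\}\Rightarrow\{X_N\leq t\}$ modulo a null-in-the-limit event. Everything else is a routine application of the Portmanteau characterization of distributional convergence via continuity points.
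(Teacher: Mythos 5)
Your proof is correct and follows essentially the same route as the paper: both arguments fix a continuity point of $F_Y$, bound $\P(Y_N\le t)$ from above via $X_N$ plus the error $\P(Y_N<X_N)$ and from below via $Z_N$ minus $\P(Y_N>Z_N)$, and pass to the limit. The only difference is that you make the underlying event inclusions explicit, which the paper leaves implicit.
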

\begin{proof}
Let $\xi$ be a continuity point of the distribution function of $Y.$ Then
\begin{equation*}
\liminf_{N\to \infty} \P(Y_N \leq \xi) \geq \liminf_{N\to \infty}\P(Z_N \leq \xi) -\lim_{N\to \infty} \P(Y_N > Z_N) =\P(Y\leq \xi),
\end{equation*}
and
\begin{equation*}
\limsup_{N\to \infty} \P(Y_N \leq \xi) \leq \limsup_{N\to \infty} \P(X_N \leq \xi) + \lim_{N\to \infty}\P( Y_N < X_N)=\P(Y\leq \xi).
\end{equation*}
\end{proof}

\begin{lemma}\label{sandwichhighlem}
Let $K\geq 1$ be fixed, and suppose that for each $N$ we have $K$-dimensional random vectors
$\mathbf{X}_N = \bigl( X_N^{(1)}, \dots, X_N^{(K)} \bigr),$
$\mathbf{Y}_N = \bigl( Y_N^{(1)}, \dots, Y_N^{(K)} \bigr),$ and
$\mathbf{Z}_N = \bigl( Z_N^{(1)}, \dots, Z_N^{(K)} \bigr),$ such that as $N \to \infty$,
$$
\mathbf{X}_N \Rightarrow \mathbf{Y} \quad \text{and} \quad \mathbf{Z}_N \Rightarrow \mathbf{Y},
$$
where $\mathbf{Y} = (Y^{(1)},\dots,Y^{(K)})$ is a $K$-dimensional random vector.
Assume further that for each $k=1,\dots,K$ and every $\epsilon\in (0,1)$, the following conditions hold as $N\rightarrow \infty$:
\begin{equation}\label{MDfYNXN}
\begin{aligned}
\mathbb{P}\left(Y_N^{(k)} \geq 0, \, Y_N^{(k)} < (1 - \epsilon) X_N^{(k)}\right) &\to 0, \\
\mathbb{P}\left(Y_N^{(k)} \leq 0, \, Y_N^{(k)} < (1 + \epsilon) X_N^{(k)}\right) &\to 0, \\
\mathbb{P}\left(Y_N^{(k)} \geq 0, \, Y_N^{(k)} > (1 + \epsilon) Z_N^{(k)}\right) &\to 0, \\
\mathbb{P}\left(Y_N^{(k)} \leq 0, \, Y_N^{(k)} > (1 - \epsilon) Z_N^{(k)}\right) &\to 0.
\end{aligned}
\end{equation}
Then
\begin{equation*}
\mathbf{Y}_N \Rightarrow \mathbf{Y} \quad \as{N}.
\end{equation*}
\end{lemma}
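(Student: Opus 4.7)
The plan is to mirror the one-dimensional argument in the preceding lemma, with all steps performed componentwise, and then to close the argument via a multivariate version of the sandwich principle written out at continuity points of the joint distribution function of $\textbf{Y}$.

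First, for each fixed $k$ the four probabilities in \eqref{MDfYNXN} tend to zero for every $\epsilon \in (0,1)$. Since $K$ is finite, I can extract, by a standard diagonal argument, a single sequence $\epsilon_N \downarrow 0$ such that all $4K$ probabilities obtained by substituting $\epsilon = \epsilon_N$ still tend to $0$ as $N\to\infty$. Then, for each $k=1,\dots,K$, I define
\[
\tilde{X}_N^{(k)} = \begin{cases} (1-\epsilon_N)X_N^{(k)} & \text{if } Y_N^{(k)} \geq 0,\\ (1+\epsilon_N)X_N^{(k)} & \text{if } Y_N^{(k)} < 0, \end{cases}
\qquad
\tilde{Z}_N^{(k)} = \begin{cases} (1+\epsilon_N)Z_N^{(k)} & \text{if } Y_N^{(k)} \geq 0,\\ (1-\epsilon_N)Z_N^{(k)} & \text{if } Y_N^{(k)} < 0. \end{cases}
\]
By construction, $\mathbb{P}(Y_N^{(k)} < \tilde{X}_N^{(k)}) \to 0$ and $\mathbb{P}(Y_N^{(k)} > \tilde{Z}_N^{(k)}) \to 0$ for every $k$.

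Second, I would verify that $\tilde{\textbf{X}}_N \Rightarrow \textbf{Y}$ and $\tilde{\textbf{Z}}_N \Rightarrow \textbf{Y}$. Convergence $\textbf{X}_N \Rightarrow \textbf{Y}$ implies tightness of every component, so $|\tilde{X}_N^{(k)} - X_N^{(k)}| \leq \epsilon_N |X_N^{(k)}|$ tends to $0$ in probability for each $k$; jointly, $\tilde{\textbf{X}}_N - \textbf{X}_N \to 0$ in probability in $\mathbb{R}^K$. The multivariate form of Slutsky's theorem then yields $\tilde{\textbf{X}}_N \Rightarrow \textbf{Y}$, and analogously for $\tilde{\textbf{Z}}_N$.

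The main step, which is where the real work lies, is a multivariate sandwich at joint continuity points. Fix a continuity point $\boldsymbol{\xi}=(\xi_1,\dots,\xi_K)$ of the distribution function of $\textbf{Y}$. On the event $\bigcap_k \{Y_N^{(k)} \geq \tilde{X}_N^{(k)}\}$ one has $\{\textbf{Y}_N \leq \boldsymbol{\xi}\} \subset \{\tilde{\textbf{X}}_N \leq \boldsymbol{\xi}\}$, so
\[
\mathbb{P}(\textbf{Y}_N \leq \boldsymbol{\xi}) \leq \mathbb{P}(\tilde{\textbf{X}}_N \leq \boldsymbol{\xi}) + \sum_{k=1}^K \mathbb{P}(Y_N^{(k)} < \tilde{X}_N^{(k)}),
\]
whose right-hand side tends to $\mathbb{P}(\textbf{Y} \leq \boldsymbol{\xi})$ by Step 2 and the choice of $\epsilon_N$. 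Symmetrically, on $\bigcap_k \{Y_N^{(k)} \leq \tilde{Z}_N^{(k)}\}$ one has $\{\tilde{\textbf{Z}}_N \leq \boldsymbol{\xi}\} \subset \{\textbf{Y}_N \leq \boldsymbol{\xi}\}$, hence
\[
\mathbb{P}(\textbf{Y}_N \leq \boldsymbol{\xi}) \geq \mathbb{P}(\tilde{\textbf{Z}}_N \leq \boldsymbol{\xi}) - \sum_{k=1}^K \mathbb{P}(Y_N^{(k)} > \tilde{Z}_N^{(k)}),
\]
whose right-hand side also tends to $\mathbb{P}(\textbf{Y} \leq \boldsymbol{\xi})$. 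Combining the two bounds gives $\mathbb{P}(\textbf{Y}_N \leq \boldsymbol{\xi}) \to \mathbb{P}(\textbf{Y} \leq \boldsymbol{\xi})$ at every continuity point, which is the desired joint weak convergence $\textbf{Y}_N \Rightarrow \textbf{Y}$. The only subtle point requiring care is to keep the componentwise bad events under control through a simple union bound, which is permissible precisely because $K$ is fixed and independent of $N$.
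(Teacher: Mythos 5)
Your proof is correct and is precisely the multivariate adaptation that the paper intends when it states the lemma follows ``analogously'' from the one-dimensional case: you construct $\tilde{X}_N^{(k)},\tilde{Z}_N^{(k)}$ exactly as in the paper's proof of the weakened one-dimensional sandwich lemma, apply Slutsky componentwise, and close with the distribution-function sandwich at joint continuity points via a union bound over the $K$ coordinates. No issues.
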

\begin{proof}
Choose a sequence $\epsilon_N \to 0$ such that all limits in \eqref{MDfYNXN} still hold with $\epsilon$ replaced by $\epsilon_N$. For each $k$ define
\begin{equation*}
\widetilde{X}_N^{(k)}= \begin{cases}
(1-\epsilon_N) X_N^{(k)} & \text{if } Y_N^{(k)} \geq 0,\\
(1+\epsilon_N) X_N^{(k)} & \text{if } Y_N^{(k)} < 0,
\end{cases}
\end{equation*}
and
\begin{equation*}
\widetilde{Z}_N^{(k)}=\begin{cases}
(1+\epsilon_N) Z_N^{(k)} & \text{if } Y_N^{(k)} \geq 0,\\
(1-\epsilon_N) Z_N^{(k)} & \text{if } Y_N^{(k)} < 0.
\end{cases}
\end{equation*}
By splitting according to the sign of $Y_N^{(k)}$ and using \eqref{MDfYNXN}, we obtain, for every $k$,
\begin{equation}\label{Cppb}
\mathbb{P}\bigl(Y_N^{(k)} < \widetilde{X}_N^{(k)}\bigr) \to 0,\quad
\mathbb{P}\bigl(Y_N^{(k)} > \widetilde{Z}_N^{(k)}\bigr) \to 0.
\end{equation}
Since $\mathbf{X}_N \Rightarrow \mathbf{Y}$, $\mathbf{Z}_N \Rightarrow \mathbf{Y}$ and $\epsilon_N \to 0$, Slutsky's theorem yields
\begin{equation}\label{StC}
\widetilde{\mathbf{X}}_N \Rightarrow \mathbf{Y},\quad \widetilde{\mathbf{Z}}_N \Rightarrow \mathbf{Y}.
\end{equation}

Let
$$E_N=\bigcap_{k=1}^{K}\left\{\widetilde X_N^{(k)}\le Y_N^{(k)}\le\widetilde Z_N^{(k)}\right\}.$$
By \eqref{Cppb},
$$\mathbb P(E_N^c)\le\sum_{k=1}^{K}\left[\mathbb P\left(Y_N^{(k)}<\widetilde X_N^{(k)}\right)
+\mathbb P\left(Y_N^{(k)}>\widetilde Z_N^{(k)}\right)\right]\to0.
$$
For any $x=(x_1,\ldots,x_K)\in\mathbb R^K$, on the event $E_N,$ we have $\left\{\widetilde{\mathbf Z}_N\le x\right\}\subseteq\left\{\mathbf Y_N\le x\right\}\subseteq\left\{\widetilde{\mathbf X}_N\le x\right\}.$
Consequently,
$$\mathbb P\left(\widetilde{\mathbf Z}_N\le x\right)-\mathbb P(E_N^c)\le\mathbb P\left(\mathbf Y_N\le x\right)\le\mathbb P\left(\widetilde{\mathbf X}_N\le x\right)+\mathbb P(E_N^c).
$$
Now let $x$ be a continuity point of the distribution function of $\mathbf Y$. Using \eqref{StC}, both the left and the right sides  of the above inequality converge to $\mathbb P(\mathbf Y\le x).$
It follows that $\mathbb P(\mathbf Y_N\le x)\longrightarrow\mathbb P(\mathbf Y\le x),$ and therefore $\mathbf Y_N\Rightarrow\mathbf Y.$
\end{proof}

\section{Intermediately trimmed distributional limits}\label{intersec}
In this section, we assume that the system $(M, \mathcal{B}(M), \mu, T)$ is exponentially mixing of all orders, and that $\Ord_{x^*}(f) = \beta >\frac{d}{2}$. We will prove an intermediately trimmed distributional law, which establishes Theorem \ref{intertrimthm}. Without loss of generality, we assume that $\Res_{x^*}(f) = 1$ in this section.

Recall that $x_i$ is the $i$th closest point to $x^*$ among the $k(N)$ points selected to maximize their $f$ values, satisfying
\begin{align*}
&d(x_1,x^*) \leq d(x_2,x^*)\leq\ldots\leq d(x_{k(N)}, x^*)\\
& f(x_i) \geq f(T^j(x)) \quad \text{for } 1\leq i\leq k(N),\, 0\leq j\leq N-1,\, T^j(x)\not\in \{x_1,\ldots,x_{k(N)}\}.
\end{align*}
The point $\hat{x}_i$ represents the $i$th closest point to $x^*$ among all of $\{x, T(x), \ldots, T^{N-1}(x)\}$. With these definitions, the trimmed sum is given by
\begin{equation*}
S_N^{k(N)}(f)(x) = S_N(f)(x) - \sum_{i=1}^{k(N)} f(x_i),
\end{equation*}
and the version defined in Section~\ref{nearsec} is given by
\begin{equation*}
\hat{S}_N^{k(N)}(f)(x) = S_N(f)(x) - \sum_{i=1}^{k(N)} f(\hat{x}_i).
\end{equation*}

To prevent the proof of Theorem \ref{intertrimthm} from being obscured by technical details, we first explain the idea for the geometrically trimmed sum $\hat{S}_N^{k(N)}(f)$, a simpler case, before proceeding with the full technical arguments.

Let $r_N>0$ be chosen by
$$
\mu(B_{r_N}(x^*))=\frac{k(N)}{N}.
$$
Since $\mu(B_r(x^*)) \sim B_d \rho(x^*) r^d$, it follows that $r_N^d\sim \frac{k(N)}{B_d \rho(x^*) N}$.
Define $\hat{f}_N = f\cdot \left(1-\one_{B_{r_N}(x^*)} \right).$ Then
\begin{equation}\label{DchS}
\hat{S}_N^{k(N)}(f)= S_N(\hat{f}_N)+\hat{W}_N,
\end{equation}
where
$$\hat{W}_N=S_N\left(f \cdot \one_{B_{r_N}(x^*)}\right)-\sum_{i=1}^{k(N)} f(\hat{x}_i).$$
Writing $\xi_N=d(x^*,\hat{x}_{k(N)})$, heuristically we expect $\xi_N\approx r_N$. We analyze $\hat{W}$ by considering the value of $\xi_N$:
\begin{itemize}
\item If $\xi_N\leq r_N$, then all of the points $\hat{x}_i$ lie in $B_{r_N}(x^*)$; in this case $\hat{W}_N$ is the sum of $f$ over all the points in $B_{r_N}(x^*)$ that are not one of the $\hat{x}_i$,
\item Otherwise, if $\xi_N>r_N$, then every point in $B_{r_N}(x^*)$ is one of the $\hat{x}_i$, in this case $-\hat{W}_N$ is the sum of $f$ over the remaining points.
\end{itemize}
In both cases, all of the points that are being summed over are between $r_N$ and $\xi_N$, keeping in mind that $\xi_N \approx r_N$ it follows that
\begin{equation}\label{hatWeq}
\hat{W}_N \approx \frac{S_N\bigl(\one_{B_{r_N}(x^*)}\bigr)-k(N)}{r_N^{\beta}}.
\end{equation}
From Corollary \ref{CLTgenrem} with the regularly adapted sequence $A_N=B_{r_N}(x^*)$, it follows that
\begin{equation}\label{CLTbal}
\left(\frac{S_N\left(\one_{B_{r_N}(x^*)}\right)-k(N)}{\sqrt{k(N)}},\, \frac{S_N(\hat{f}_N)-N\E\hat{f}_N}{\sqrt{k(N)}\,r_N^{-\beta}}\right) \Rightarrow (\mathcal{N}(0,1), \mathcal{N}(0,\sigma^2)),
\end{equation}
as $N\to\infty,$ where $(\mathcal{N}(0,1), \mathcal{N}(0,\sigma^2))$ is a vector of independent normal random variables, and
$\sigma^2=\frac{1}{2\alpha-1}.$ Combining \eqref{DchS}, \eqref{hatWeq} and \eqref{CLTbal} completes the proof for $\hat{S}_N^{k(N)}(f)$.

The main technical challenge in going from $\hat{S}_N^{k(N)}$ to $S_N^{k(N)}$ is that, if we simply try to split $S_N^{k(N)}$ in the same way, setting
\begin{equation*}
W_N = S_N\left(f \cdot \one_{B_{r_N}(x^*)}\right) - \sum_{i=1}^{k(N)} f(x_i),
\end{equation*}
there is no analogue of \eqref{hatWeq}.

Motivated by the proof of Lemma~\ref{xklem}, instead of balls we will use superlevel sets of $f$. Therefore, let
\begin{equation*}
\Sigma_R = \{f\geq R\} \quad \textit{for } R>0,
\end{equation*}
and let $\lambda_N>0$ be the unique value such that
\begin{equation*}
\mu(\Sigma_{\lambda_N})=\frac{k(N)}{N}.
\end{equation*}
From Lemma \ref{inclanlem} it follows that
\begin{equation*}
\lambda_N \sim B_d^{\alpha} \rho(x^*)^{\alpha} N^{\alpha} k(N)^{-\alpha} \sim r_N^{-\beta}.
\end{equation*}

Now, defining
\begin{equation}\label{Wdef}
W_N = S_N\left(f \cdot \one_{\Sigma_{\lambda_N}}\right) - \sum_{i=1}^{k(N)} f(x_i),
\end{equation}
it \emph{does} hold that
\begin{equation}\label{Weq}
W_N \approx \lambda_N \left(S_N(\one_{\Sigma_{\lambda_N}}) - k(N) \right),
\end{equation}
as will be rigorously justified below, in direct analogy with \eqref{hatWeq}.

The preceding heuristic argument can be made rigorous once the assumptions of Corollary~\ref{CLTgenrem} have been verified for the superlevel sets $\Sigma_{\lambda_N}$. We therefore show that the sequence $\Sigma_{\lambda_N}$ is regularly adapted in the sense of Section~\ref{Tr}.

\begin{lemma}\label{inclanlem}
Let $R_N\rightarrow \infty$. Then there are $\epsilon_N \rightarrow 0$ such that
\begin{equation}\label{inclanlemeq}
B_{(1-\epsilon_N) R_N^{-1/\beta}}(x^*) \subset \Sigma_{R_N} \subset B_{(1+\epsilon_N) R_N^{-1/\beta}}(x^*).
\end{equation}
\end{lemma}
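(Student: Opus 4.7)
Set $r_N^\star = R_N^{-1/\beta}$, so the two desired inclusions are equivalent to the two pointwise implications:
\begin{itemize}
\item[(a)] if $d(x,x^\star) \leq (1-\epsilon_N)\,r_N^\star$, then $f(x) \geq R_N$;
\item[(b)] if $f(x) \geq R_N$, then $d(x,x^\star) \leq (1+\epsilon_N)\,r_N^\star$.
\end{itemize}
I will construct a single sequence $\epsilon_N \to 0$ that makes both work, using only the two-sided asymptotic bound \eqref{Estf} near $x^\star$ (with the section's normalization $\Res_{x^\star}(f)=1$) together with the global upper bound \eqref{DeCf}.

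For (a), apply the lower half of \eqref{Estf}: whenever $d(x,x^\star) \leq (1-\epsilon_N)\,r_N^\star$,
\[
f(x) \;\geq\; \bigl(1-\epsilon((1-\epsilon_N)r_N^\star)\bigr)\,d(x,x^\star)^{-\beta}
\;\geq\; \bigl(1-\epsilon(r_N^\star)\bigr)(1-\epsilon_N)^{-\beta}\,R_N,
\]
using monotonicity of $\epsilon(\cdot)$. This is $\geq R_N$ as soon as $(1-\epsilon_N)^\beta \leq 1-\epsilon(r_N^\star)$, which forces a lower bound on $\epsilon_N$ of order $\epsilon(r_N^\star)/\beta$ for small values.

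For (b), the first (mild) obstacle is that $f$ need not be monotone in $d(\cdot,x^\star)$, so one cannot directly read off a radius from the level $R_N$; this is exactly the subtlety flagged in Section~\ref{nearsec}. I handle it by first \emph{localizing} $\Sigma_{R_N}$: the global bound \eqref{DeCf} gives $f(x) \leq C_0\,d(x,x^\star)^{-\beta}$, so any $x \in \Sigma_{R_N}$ automatically satisfies $d(x,x^\star) \leq C_0^{1/\beta} R_N^{-1/\beta}$, which tends to $0$ and lies inside the two-sided regime of \eqref{Estf} for $R_N$ large. On that regime the upper half of \eqref{Estf} yields $R_N \leq (1+\epsilon(d(x,x^\star)))\,d(x,x^\star)^{-\beta}$, hence
\[
d(x,x^\star) \;\leq\; \bigl(1+\epsilon(C_0^{1/\beta} r_N^\star)\bigr)^{1/\beta}\, r_N^\star,
\]
again via monotonicity of $\epsilon(\cdot)$.

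Finally, I define
\[
\epsilon_N \;=\; \max\!\Bigl\{\,1-\bigl(1-\epsilon(r_N^\star)\bigr)^{1/\beta},\;\;\bigl(1+\epsilon(C_0^{1/\beta}r_N^\star)\bigr)^{1/\beta}-1\,\Bigr\},
\]
which tends to $0$ since $\epsilon(r)\to 0$ as $r\to 0$ and $r_N^\star \to 0$. By construction this $\epsilon_N$ is admissible in both (a) and (b), establishing \eqref{inclanlemeq}. I expect no substantial obstacle: the only subtle point is that the upper inclusion is proved in two stages (first a crude localization via \eqref{DeCf}, then the sharp asymptotic \eqref{Estf}), which is needed precisely because $f$ is not assumed radial or monotone away from $x^\star$.
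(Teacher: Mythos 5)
Your proof is correct and follows essentially the same route as the paper: both arguments are direct applications of \eqref{Estf} with an explicit choice $\epsilon_N = \max\bigl(1-(1-\epsilon(\cdot))^{1/\beta},\,(1+\epsilon(\cdot))^{1/\beta}-1\bigr)$. The only (minor) difference is in the upper inclusion, where you first localize $\Sigma_{R_N}$ inside $B_{C_0^{1/\beta}R_N^{-1/\beta}}(x^*)$ via \eqref{DeCf} before invoking \eqref{Estf}, whereas the paper argues by contrapositive using \eqref{Estf} at radius $2R_N^{-1/\beta}$; your extra localization step is slightly more careful, since \eqref{Estf} with parameter $r$ says nothing about points with $d(x,x^*)>r$.
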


\begin{proof}
Fix a constant $C_1>\max\{1,C_0^{1/\beta}\}$. The result follows from \eqref{Estf} with
\begin{equation*}
\epsilon_N=\max\left(1-\left(1-\epsilon(R_N^{-1/\beta})\right)^{1/\beta},\,
\left(1+\epsilon(C_1R_N^{-1/\beta})\right)^{1/\beta}-1\right).
\end{equation*}
Clearly, $\epsilon_N\to0$. Indeed, if $d(x,x^*) < (1-\epsilon_N)R_N^{-1/\beta}$, then
\begin{align*}
f(x)&> \left(1-\epsilon(R_N^{-1/\beta})\right)\left((1-\epsilon_N)R_N^{-1/\beta}\right)^{-\beta}\\
&\geq \left(1-\epsilon(R_N^{-1/\beta})\right)\left(1-\epsilon(R_N^{-1/\beta})\right)^{-1}R_N = R_N,
\end{align*}
so $x \in \Sigma_{R_N}$. Conversely, suppose $x\in\Sigma_{R_N}$. By the global estimate
$f(x)\leq C_0d(x,x^*)^{-\beta}$, we have $d(x,x^*)\leq C_0^{1/\beta}R_N^{-1/\beta}<C_1R_N^{-1/\beta}$.
Applying \eqref{Estf} with radius $C_1R_N^{-1/\beta}$ then yields
$$R_N\leq f(x)<\left(1+\epsilon(C_1R_N^{-1/\beta})\right)d(x,x^*)^{-\beta},$$
and therefore
$$d(x,x^*)<\left(1+\epsilon(C_1R_N^{-1/\beta})\right)^{1/\beta}
R_N^{-1/\beta}\leq(1+\epsilon_N)R_N^{-1/\beta}.$$
Thus $x\in B_{(1+\epsilon_N)R_N^{-1/\beta}}(x^*)$.
\end{proof}

A direct corollary of Lemma \ref{inclanlem} is that for $R_N \to \infty$,
\begin{equation}\label{muanlem}
\mu(\Sigma_{R_N}) \sim B_d \rho(x^*) R_N^{-\frac{1}{\alpha}}.
\end{equation}

\begin{lemma}\label{approxanlem}
For $R_N\rightarrow \infty$ and $\epsilon_N \rightarrow 0$, there exist $C^{\kappa}$ functions $h_N:M\rightarrow[0,1]$ such that
$\one_{\Sigma_{R_N}}\leq h_N\leq\one_{\Sigma_{R_N-\epsilon_N}}$,
\begin{equation*}
\|\one_{\Sigma_{R_N}} - h_N\|_{L^1} \ll \epsilon_N R_N^{-\frac{1}{\alpha}-1} \quad \text{and} \quad \|h_N\|_{C^{\kappa}} \ll \epsilon_N^{-\kappa} R_N^{\kappa+\frac{\kappa}{\beta}}.
\end{equation*}
\end{lemma}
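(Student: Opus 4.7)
The plan is to build $h_N$ as a smooth composition with $f$ itself. Fix once and for all a $C^\infty$ template $\phi:\mathbb{R}\to[0,1]$ with $\phi\equiv 0$ on $(-\infty,0]$ and $\phi\equiv 1$ on $[1,\infty)$, and set
\[
h_N(x) \;=\; \phi\!\left(\frac{f(x)-(R_N-\epsilon_N)}{\epsilon_N}\right) \quad \text{for } x\neq x^*,
\]
extended by $h_N(x^*)=1$. For $N$ large, $f>R_N$ on some neighborhood of $x^*$, so $h_N\equiv 1$ there and the extension is $C^\kappa$ through the singularity; outside $\Sigma_{R_N-\epsilon_N}$ the shifted argument is negative, so $h_N\equiv 0$. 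By construction $\one_{\Sigma_{R_N}}\leq h_N\leq \one_{\Sigma_{R_N-\epsilon_N}}$, and $h_N$ varies only on the thin shell $\mathcal{A}_N := \Sigma_{R_N-\epsilon_N}\setminus\Sigma_{R_N}$, so both the $L^1$ and $C^\kappa$ bounds reduce to controlling geometry on $\mathcal{A}_N$.

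The $L^1$ estimate is immediate from $|\one_{\Sigma_{R_N}}-h_N|\leq \one_{\mathcal{A}_N}$ combined with Lemma~\ref{muanlem}. The Taylor expansion $(R_N-\epsilon_N)^{-1/\alpha}-R_N^{-1/\alpha}\sim \alpha^{-1}\epsilon_N R_N^{-1/\alpha-1}$ then gives
\[
\mu(\mathcal{A}_N) \;=\; \mu(\Sigma_{R_N-\epsilon_N}) - \mu(\Sigma_{R_N}) \;\asymp\; \epsilon_N R_N^{-1/\alpha-1},
\]
which is the desired bound.

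For the $C^\kappa$ bound, applying Lemma~\ref{inclanlem} to both $R_N$ and $R_N-\epsilon_N$ yields $d(x,x^*)\asymp R_N^{-1/\beta}$ on $\mathcal{A}_N$. Writing $f(x)=g(x)\,d(x,x^*)^{-\beta}$ with $g$ smooth and $g(x^*)=\Res_{x^*}(f)>0$, elementary differentiation gives $|\nabla f(x)|\lesssim d(x,x^*)^{-\beta-1}\asymp R_N^{1+1/\beta}$ on $\mathcal{A}_N$. The chain rule then bounds
\[
\|\nabla h_N\|_\infty \;\leq\; \|\phi'\|_\infty\,\epsilon_N^{-1}\sup_{\mathcal{A}_N}|\nabla f| \;\lesssim\; \epsilon_N^{-1}R_N^{1+1/\beta}.
\]
For $\kappa\in(0,1]$, combining this Lipschitz bound with $\|h_N\|_\infty\leq 1$ via the standard interpolation $[h_N]_{C^\kappa}\lesssim \|\nabla h_N\|_\infty^{\kappa}$ yields
\[
\|h_N\|_{C^\kappa} \;\lesssim\; \epsilon_N^{-\kappa}R_N^{\kappa(1+1/\beta)} \;\leq\; \epsilon_N^{-\kappa}R_N^{1+\kappa/\beta},
\]
as claimed.

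The only genuine subtlety is verifying $C^\kappa$ regularity through $x^*$, which is handled automatically by the fact that $h_N\equiv 1$ in a neighborhood of $x^*$ once $R_N$ is large; all remaining work is elementary calculus on $\mathcal{A}_N$ using the power-singularity structure of $f$.
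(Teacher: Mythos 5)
Your construction of $h_N$ is the same as the paper's (a smooth cutoff in the value of $f$, i.e.\ $h_N=\theta_N\circ f$), and your treatment of the sandwich property, the regularity through $x^*$, and the $C^{\kappa}$ bound via the chain rule on the shell $\mathcal{A}_N$ matches the paper's argument. However, your $L^1$ estimate has a genuine gap. You deduce $\mu(\mathcal{A}_N)\asymp \epsilon_N R_N^{-1/\alpha-1}$ by formally differentiating the asymptotic relation $\mu(\Sigma_R)\sim B_d\rho(x^*)R^{-1/\alpha}$ of Lemma~\ref{muanlem}. That relation only says $\mu(\Sigma_R)=B_d\rho(x^*)R^{-1/\alpha}\bigl(1+o(1)\bigr)$, and the unspecified $o(R^{-1/\alpha})$ error can completely swamp the difference you are trying to estimate: since $\epsilon_N\to 0$ and $R_N\to\infty$, the target quantity $\epsilon_N R_N^{-1/\alpha-1}$ is $o\bigl(R_N^{-1/\alpha}\bigr)$ by a factor $\epsilon_N/R_N$, so an asymptotic for $\mu(\Sigma_R)$ gives no control whatsoever on $\mu(\Sigma_{R_N-\epsilon_N})-\mu(\Sigma_{R_N})$. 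A Taylor expansion of the leading term is not a substitute for regularity of the actual distribution function $R\mapsto\mu(f\geq R)$.

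What is needed, and what the paper does, is a direct estimate of $\mu(R_N-\epsilon_N\leq f\leq R_N)$ using the co-area formula: write $f=\phi\cdot g$ with $\phi(x)=d(x^*,x)^{-\beta}$, show that $\|D_xf\|\sim\beta\,d(x^*,x)^{-\beta-1}\asymp R_N^{1+1/\beta}$ on the shell (note that here a \emph{lower} bound on $\|D_xf\|$ is essential, not just the upper bound you record, since $1/\|D_xf\|$ appears in the co-area integrand), bound the $(d-1)$-dimensional Hausdorff measure of the level sets $\{f=t\}$ by $O\bigl(R_N^{-(d-1)/\beta}\bigr)$, and integrate over $t\in[R_N-\epsilon_N,R_N]$ to obtain $O\bigl(\epsilon_N R_N^{-1/\alpha-1}\bigr)$. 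Your proof as written omits this step entirely, and the lemma's sharp $L^1$ bound (which is used with specific choices of $\epsilon_N$ in Lemmas~\ref{truncanvarlem} and~\ref{CLTlem}) does not follow from the ingredients you invoke.
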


\begin{proof}
Let $\theta_N:[0,\infty) \rightarrow [0,1]$ be a smooth function with
\begin{itemize}
\item $\theta_N(r)=1$ if $r\geq R_N$,
\item $\theta_N(r)=0$ if $r\leq R_N-\epsilon_N$,
\item and $\|\theta_N\|_{C^{\kappa}} \ll \epsilon_N^{-\kappa}$.
\end{itemize}
Define $h_N(x)=\theta_N(f(x))$ for $x\neq x^*$ and $h_N(x)=1$ for $x=x^*.$
We verify the required properties for $h_N$. Using \eqref{inclanlemeq}, it follows that $h_N= 1$ on $\Sigma_{R_N} \supset B_{R_N^{-1/\beta}/2}(x^*)$, and $h_N= 0$ outside $\Sigma_{R_N-\epsilon_N} \subset B_{2R_N^{-1/\beta}}(x^*)$. Therefore, the standard $C^\kappa$ composition estimate implies that
\begin{equation*}
\|h_N\|_{C^{\kappa}} \ll \epsilon_N^{-\kappa} R_N^{\kappa+\frac{\kappa}{\beta}}.
\end{equation*}
On the other hand, to estimate $\|\one_{\Sigma_{R_N}}-h_N\|_{L^1} \leq \mu(R_N-\epsilon_N \leq f \leq R_N)$, write $f(x) = \phi(x) \cdot g(x)$ with $\phi(x) = d(x^*, x)^{-\beta}$ and $g\in C^{\kappa}$ as in Definition~\ref{PowSing}. Using the product rule $D_x f=D_x \phi \cdot g(x)+\phi(x) \cdot D_x g$,
and recalling that by assumption $g(x^*)=\Res_{x^*} (f)=1$, we have
\begin{equation}\label{DfEst}
\| D_x f \| \sim \beta d(x^*,x)^{-\beta-1} \quad \textit{as } x\rightarrow x^*.
\end{equation}
For $t \in [R_N-\epsilon_N, R_N]$, the level set $L_t=\{f=t\}$ is a $C^{\kappa}$-regular $(d-1)$-dimensional hypersurface and its $(d-1)$-dimensional Hausdorff measure scales as $H^{d-1}(L_t)=O\left(t^{-(d-1)/\beta}\right)=O\left(R_N^{-(d-1)/\beta}\right)$. Indeed, even though $L_t$ is not simply a hypersphere, its deviation from the hypersphere is determined by $g$, a $C^{\kappa}$ function whose derivative is bounded. By the co-area formula,
\begin{equation}\label{Coarea}
\mu(R_N - \epsilon_N \leq f \leq R_N )
= \int_{R_N - \epsilon_N}^{R_N} \left( \int_{L_t} \frac{\rho(x)}{\|D_x f\|} \d H^{d-1}(x) \right)\d t.
\end{equation}
On $L_t$, it holds that $\|D_x f\| \asymp R_N^{1 + 1/\beta}$ according to \eqref{DfEst}, so \eqref{Coarea} is bounded above by $O\left(\epsilon_N R_N^{-1/\alpha-1}\right)$. Hence,
\begin{equation*}
\| \one_{\Sigma_{R_N}}-h_N\|_{L^1} \ll \epsilon_N R_N^{-\frac{1}{\alpha}-1}.
\end{equation*}
\end{proof}

The two lemmas above show that $\Sigma_{\lambda_N}$ is regularly adapted, and Corollary~\ref{CLTgenrem} can be applied.

\begin{lemma}\label{CLTlem}
Let $\lambda_N>0$ be chosen such that
$$
\mu(\Sigma_{\lambda_N})=\frac{k(N)}{N},
$$ where $\Sigma_R=\{f\ge R\},$
and define
$$f_N=f \cdot \left(1-\one_{\Sigma_{\lambda_N}} \right).$$
Assume $k(N)\to\infty$ and $k(N)=o(N)$. Then
\begin{equation}\label{normalclaim}
\left( \frac{S_N(\one_{\Sigma_{\lambda_N}}) - k(N)}{\sqrt{k(N)}}, \frac{S_N(f_N) - N \E f_N}{\sqrt{k(N)} \lambda_N} \right) \Rightarrow (\mathcal{N}(0,1), \mathcal{N}(0,\sigma^2)) \quad \as{N},
\end{equation}
where $(\mathcal{N}(0,1), \mathcal{N}(0,\sigma^2))$ is a vector of independent normal random variables, and
\begin{equation*}
\sigma^2= \frac{1}{2\alpha-1}.
\end{equation*}
\end{lemma}
\begin{proof}
By Lemmas \ref{inclanlem} and \ref{approxanlem}, the superlevel sets $\Sigma_{\lambda_N}$ form a regularly adapted sequence. Moreover, their effective radii satisfy
$$r_N\sim \lambda_N^{-\frac{1}{\beta}}\sim\left(\frac{k(N)}{B_d\rho(x^*)N}\right)^{\frac{1}{d}}.$$
In particular, $Nr_N^d\sim \frac{k(N)}{B_d\rho(x^*)}\to\infty.$ Applying Corollary~\ref{CLTgenrem} with $A_N=\Sigma_{\lambda_N}$, the claim follows.

\end{proof}

\begin{lemma}\label{normallem}
Under the same hypotheses as Lemma~\ref{CLTlem}, for every $t>0$ we have
\begin{equation*}
\frac{S_N(\one_{\Sigma_{t \lambda_N}}) - N \mu(\Sigma_{t \lambda_N})}{\sqrt{k(N)}} \Rightarrow \mathcal{N}(0,t^{-\frac{1}{\alpha}}) \quad \as{N},
\end{equation*}
where $\mathcal{N}(0,v)$ is a normal distribution with mean $0$ and variance $v$.
\end{lemma}
\begin{proof}
Fix $t>0$, and set $\tilde\lambda_N=t\lambda_N,$ $\tilde k(N)=N\mu(\Sigma_{\tilde\lambda_N}).$
Since $\mu(\Sigma_R)\sim B_d\rho(x^*)R^{-\frac{1}{\alpha}}$, we have
$\tilde k(N)\sim B_d\rho(x^*)N (t\lambda_N)^{-\frac{1}{\alpha}}\sim t^{-\frac{1}{\alpha}}k(N).$
Applying Lemma~\ref{CLTlem} with $\tilde\lambda_N$ and $\tilde k(N)$ gives
$$
\frac{S_N(\one_{\Sigma_{\tilde\lambda_N}})-\tilde k(N)}{\sqrt{\tilde k(N)}}
\Rightarrow \mathcal N(0,1).
$$
Now rewrite
$$
\frac{S_N(\mathbf{1}_{\Sigma_{t\lambda_N}})-N\mu(\Sigma_{t\lambda_N})}{\sqrt{k(N)}}
=\sqrt{\frac{\tilde{k}(N)}{k(N)}}\;\cdot\;\frac{S_N(\mathbf{1}_{\Sigma_{\tilde{\lambda}_N}})-\tilde{k}(N)}{\sqrt{\tilde{k}(N)}}.
$$
Since $\tilde k(N)/k(N)\to t^{-1/\alpha}$, the desired result follows by Slutsky's theorem.
\end{proof}

Letting
\begin{equation*}
f_N=f \cdot \left(1-\one_{\Sigma_{\lambda_N}} \right),
\end{equation*}
and with $W_N$ defined as in \eqref{Wdef}, we have
\begin{equation*}
S_N^{k(N)}(f) = S_N(f_N)+W_N.
\end{equation*}
The conclusion of the theorem will follow once we show
\begin{itemize}
\item[(A)] \begin{equation*}
\frac{S_N(f_N)-N \E f_N}{B_d^{\alpha} \rho(x^*)^{\alpha} N^{\alpha} k(N)^{\frac{1}{2}-\alpha}} \Rightarrow \mathcal{N}(0,\sigma^2) \quad \as{N},
\end{equation*}
where
\begin{equation*}
\sigma^2 = \frac{1}{2\alpha - 1},
\end{equation*}
\item[(B)] \begin{equation*}
\frac{W_N}{B_d^{\alpha} \rho(x^*)^{\alpha} N^{\alpha} k(N)^{\frac{1}{2}-\alpha}} \Rightarrow \mathcal{N}(0,1) \quad \as{N},
\end{equation*}
\item[(C)] $(S_N(f_N)-N\E f_N) / (B_d^{\alpha} \rho(x^*)^{\alpha} N^{\alpha} k(N)^{\frac{1}{2}-\alpha})$ is asymptotically independent of $W_N /(B_d^{\alpha} \rho(x^*)^{\alpha} N^{\alpha} k(N)^{\frac{1}{2}-\alpha})$.
\end{itemize}

Recall that $W_N = S_N\left(f \cdot \one_{\Sigma_{\lambda_N}}\right) - \sum_{i=1}^{k(N)} f(x_i),$
where $x_1,\dots,x_{k(N)}$ are the orbit points where $f$ assumes the largest values. Now one of two alternatives can happen:
\begin{itemize}
\item[(i)] If $f(x_i) \geq \lambda_N$ for all $1 \leq i \leq k(N)$, then $W_N$ is the sum of $f(y)$ over all points $y \in \Sigma_{\lambda_N}$ that are not among the $x_i$'s; hence $W_N \geq 0$.
\item[(ii)] Otherwise, there exists some $i$ with $1 \leq i \leq k(N)$ such that $f(x_i) < \lambda_N$; in this case, all points in $\Sigma_{\lambda_N}$ are already included among the $x_i$'s, and $-W_N$ equals the sum of $f$ over those $x_i$ lying outside $\Sigma_{\lambda_N}$, so $W_N< 0$.
\end{itemize}

In both situations, $W_N$ is (up to sign) a sum of $f(y)$ over certain orbit points. We prove upper and lower bounds for $W_N$ in both cases.

Notice first a simple lower bound. In case (i) each term $f(y)$ satisfies $f(y)\geq \lambda_N$ because $y\in\Sigma_{\lambda_N}$; in case (ii) each term satisfies $f(y)<\lambda_N$ because $y\notin\Sigma_{\lambda_N}$.
The number of terms is $S_N(\one_{\Sigma_{\lambda_N}})-k(N)$, which is negative in case (ii).
Thus, in both cases,
\begin{equation}\label{Wntrivbound}
W_N\geq \lambda_N\left(S_N(\one_{\Sigma_{\lambda_N}})-k(N)\right).
\end{equation}

We now turn to obtaining a probabilistic upper bound for $W_N$ in case (i). Let $J=S_N\bigl(\one_{\Sigma_{\lambda_N}}\bigr)-k(N)$ and denote
\begin{equation*}
\{y_1,\dots , y_J\}= \{T^n(x) \;|\;0\leq n\leq N-1, T^n(x) \in \Sigma_{\lambda_N} \setminus \{x_1,\dots,x_{k(N)}\}\},
\end{equation*}
then $W_N=\sum_{j=1}^J f(y_j)$. If
$$
W_N>(1+\epsilon)\lambda_N\left(S_N\bigl(\one_{\Sigma_{\lambda_N}}\bigr)-k(N)\right),
$$
then $J\geq1,$ and at least one of the $y_j$'s satisfies $f(y_j)>(1+\epsilon)\lambda_N$. Since the $x_i$'s are defined as the points with the largest $f$-values, it follows that $f(x_i)\geq f(y_j)>(1+\epsilon)\lambda_N$ for all $i=1,\dots, k(N)$. Consequently,
$$S_N\left(\one_{\Sigma_{(1+\epsilon)\lambda_N}}\right)\ge k(N)+1\geq k(N).$$
Hence
\begin{equation}\label{WNpi}
\left\{W_N > 0,\, W_N > (1 + \epsilon) \lambda_N \left(S_N(\one_{\Sigma_{\lambda_N}})-k(N)\right)\right\}
\subset\left\{S_N\left(\one_{\Sigma_{(1+\epsilon)\lambda_N}}\right)\ge k(N)\right\}.
\end{equation}
For $t>0$, Lemma~\ref{normallem} gives
\begin{equation}\label{CLTsps}
\frac{S_N(\one_{\Sigma_{t \lambda_N}}) - N \mu(\Sigma_{t \lambda_N})}{\sqrt{k(N)}} \Rightarrow \mathcal{N}(0,t^{-\frac{1}{\alpha}}) \quad \as{N}.
\end{equation}
Applying \eqref{CLTsps} with $t=1+\epsilon$ we obtain
\begin{equation}\label{CLT1p}
\frac{S_N\bigl(\one_{\Sigma_{(1+\epsilon)\lambda_N}}\bigr)
-N\mu\bigl(\Sigma_{(1+\varepsilon)\lambda_N}\bigr)}{\sqrt{k(N)}}
\Rightarrow\mathcal{N}\bigl(0,(1+\epsilon)^{-1/\alpha}\bigr),
\end{equation}
Moreover, \eqref{muanlem} implies that
$$
N\mu\left(\Sigma_{(1+\epsilon)\lambda_N}\right)
\sim
(1+\varepsilon)^{-\frac{1}{\alpha}}k(N).
$$
Thus, for all sufficiently large $N$,
\begin{equation}\label{kN1p}
k(N)-N\mu(\Sigma_{(1+\epsilon)\lambda_N})
\ge c(\epsilon)k(N)
\end{equation}
with some $c(\epsilon)>0$. Using \eqref{CLT1p} and \eqref{kN1p}, we have
\begin{align*}
&\mu\left\{S_N\left(\one_{\Sigma_{(1+\epsilon)\lambda_N}}\right)\ge k(N)\right\}\\
&\le\mu\left\{
\frac{S_N\left(\one_{\Sigma_{(1+\epsilon)\lambda_N}}\right)-N\mu(\Sigma_{(1+\epsilon)\lambda_N})}{\sqrt{k(N)}}
\ge c(\epsilon)\sqrt{k(N)}\right\}\to0.
\end{align*}
Combining this with \eqref{WNpi} proves
\begin{equation}\label{Wnposboundl}
\lim_{N\rightarrow\infty} \mu\left(W_N>0,\, W_N>(1+\epsilon)\lambda_N \left(S_N(\one_{\Sigma_{\lambda_N}})-k(N)\right) \right) = 0.
\end{equation}
Moreover, using \eqref{CLTsps} with $t=1$ we have $\lim_{N\rightarrow\infty} \mu(W_N=0)=0.$
Therefore,
\begin{equation}\label{Wnposbound}
\lim_{N\rightarrow\infty} \mu\left(W_N\ge0,\, W_N>(1+\epsilon)\lambda_N \left(S_N(\one_{\Sigma_{\lambda_N}})-k(N)\right) \right)= 0,\quad \forall \epsilon>0.
\end{equation}

For case (ii) we argue similarly: if $f(x_i) < (1-\varepsilon)\lambda_N$ for some $1\leq i\leq k(N)$, then $\Sigma_{(1-\varepsilon)\lambda_N}$ contains at most $k(N)-1$ of the orbit points, so $S_N\bigl(\one_{\Sigma_{(1-\varepsilon)\lambda_N}}\bigr) \le k(N)-1.$ Then the same CLT argument using \eqref{CLTsps} gives
\begin{equation}\label{Wnnegbound}
\lim_{N\rightarrow\infty} \mu\left(W_N\le 0,\, W_N > (1-\epsilon)\lambda_N \left(S_N(\one_{\Sigma_{\lambda_N}})-k(N)\right) \right)=0,\quad \forall \epsilon>0.
\end{equation}

Now define
\begin{equation*}
X_N^{(1)}=Z_N^{(1)}=\frac{S_N(\one_{\Sigma_{\lambda_N}})-k(N)}{\sqrt{k(N)}},\qquad
Y_N^{(1)}=\frac{W_N}{\sqrt{k(N)}\lambda_N},
\end{equation*}
and
\begin{equation*}
X_N^{(2)}=Y_N^{(2)}=Z_N^{(2)}=\frac{S_N(f_N)-N\E f_N}{\sqrt{k(N)}\lambda_N}.
\end{equation*}
Equations \eqref{Wntrivbound}, \eqref{Wnposbound} and \eqref{Wnnegbound} show that $X_N^{(1)}$, $Y_N^{(1)}$, $Z_N^{(1)}$ satisfy the hypotheses of Lemma~\ref{sandwichhighlem}.
Thus, once we prove
\begin{equation}\label{NmCm}
\left(X_N^{(1)},\,X_N^{(2)}\right)
\Rightarrow\left(\mathcal{N}(0,1),\,\mathcal{N}(0,\sigma^2)\right)
\quad \as{N},
\end{equation}
where the two limiting normal variables are independent and $\sigma^2=\frac{1}{2\alpha-1}$, Lemma~\ref{sandwichhighlem} will imply the corresponding convergence
\begin{equation*}
\left(Y_N^{(1)},\,Y_N^{(2)}\right)
\Rightarrow\left(\mathcal{N}(0,1),\,\mathcal{N}(0,\sigma^2)\right).
\end{equation*}
This is exactly the content of (A)--(C). Together with Lemma~\ref{CLTlem}, we can conclude the proof.

\begin{proof}[Proof of Theorem \ref{intertrimthm}]
Lemma~\ref{CLTlem} shows that
\begin{equation*}
\left( \frac{S_N(\one_{\Sigma_{\lambda_N}}) - k(N)}{\sqrt{k(N)}}, \frac{S_N(f_N) - N \E f_N}{\sqrt{k(N)} \lambda_N} \right) \Rightarrow (\mathcal{N}(0,1), \mathcal{N}(0,\sigma^2))
\end{equation*}
as $N\to\infty,$ where $(\mathcal{N}(0,1), \mathcal{N}(0,\sigma^2))$ is a vector of independent random variables, and
\begin{equation*}
\sigma^2=\frac{1}{2\alpha-1}.
\end{equation*}

By \eqref{Wntrivbound}, \eqref{Wnposbound} and \eqref{Wnnegbound}, we can apply Lemma \ref{sandwichhighlem} with
\begin{equation*}
X_N^{(1)}=Z_N^{(1)}=\frac{S_N(\one_{\Sigma_{\lambda_N}}) - k(N)}{\sqrt{k(N)}} \; \text{ and } \; Y_N^{(1)}=\frac{W_N}{\sqrt{k(N)} \lambda_N},
\end{equation*}
where $W_N$ is defined in \eqref{Wdef}, and
\begin{equation*}
X_N^{(2)}=Y_N^{(2)}=Z_N^{(2)}=\frac{S_N(f_N) - N \E f_N}{\sqrt{k(N)} \lambda_N},
\end{equation*}
and it follows that
\begin{equation*}
\left( \frac{W_N}{\sqrt{k(N)} \lambda_N}, \frac{S_N(f_N) - N \E f_N}{\sqrt{k(N)} \lambda_N} \right) \Rightarrow (\mathcal{N}(0,1), \mathcal{N}(0,\sigma^2)) \quad \as{N}.
\end{equation*}
This proves (A)--(C), and thus finishes the proof of convergence.
Finally, the claimed asymptotic expression for $a_N$ can be readily derived from the relation $a_N=N\mathbb{E}f_N$.
\end{proof}

\providecommand{\bysame}{\leavevmode\hbox to3em{\hrulefill}\thinspace}
\providecommand{\MR}{\relax\ifhmode\unskip\space\fi MR }
\providecommand{\MRhref}[2]{%
  \href{http://www.ams.org/mathscinet-getitem?mr=#1}{#2}
}
\providecommand{\href}[2]{#2}

\end{document}